\documentclass[11pt]{amsart}

\setlength{\oddsidemargin}{0cm}
\setlength{\evensidemargin}{0cm}
\setlength{\textwidth}{16.4cm}
\setlength{\topmargin}{0cm}
\setlength{\textheight}{21cm}

\usepackage{graphicx,caption,subcaption}

\captionsetup{width=0.9\textwidth}

\usepackage{amssymb,amsmath,amsfonts,amsthm,amscd,amstext,mathtools}
\usepackage{epstopdf}
\usepackage{microtype}
\usepackage{mathrsfs}
\usepackage{enumitem}	
\usepackage{verbatim} 
\usepackage{url} 
\usepackage{hyperref}
\setcounter{tocdepth}{2}
\usepackage{xcolor}
\usepackage{tikz}

\usepackage{cancel}

\newcommand{\ind}{{\sf 1}}

\newcommand{\bP}{\mathbf{P}}

\newcommand{\bE}{\mathbf{E}}
\newcommand{\bbP}{\mathbb{P}}
\newcommand{\bbQ}{\mathbb{Q}}
\newcommand{\bbE}{\mathbb{E}}

\newcommand{\R}{\mathbb{R}}

\newcommand{\N}{\mathbb{N}}
\newcommand{\bbZ}{\mathbb{Z}}

\newcommand{\bVar}{\mathbf{V}\mathrm{ar}}

\newcommand{\bbA}{{\ensuremath{\mathbb A}}}



\newcommand{\cA}{{\ensuremath{\mathcal A}} }
\newcommand{\cF}{{\ensuremath{\mathcal F}} }

\newcommand{\cE}{{\ensuremath{\mathcal E}} }

\newcommand{\cL}{{\ensuremath{\mathcal L}} }

\newcommand{\cI}{{\ensuremath{\mathcal I}} }
\newcommand{\cZ}{{\ensuremath{\mathcal Z}} }

\renewcommand{\epsilon}{\varepsilon}
\renewcommand{\phi}{\varphi}


\newcommand{\gb}{\beta}

\newcommand{\eps}{\varepsilon}      

\newcommand{\gz}{\zeta}

\newcommand{\go}{\omega}

\newcommand{\gO}{\Omega}

\newcommand{\gL}{\Lambda}
\newcommand{\gs}{\sigma}


\newcommand{\bt}{\textbf{\textit{t}}}

\newcounter{cst}[section]		
\newcounter{svf}[section]		

\newtheorem{theorem}{Theorem}[section]
\newtheorem{proposition}[theorem]{Proposition}

\newtheorem{lemma}[theorem]{Lemma}
\newtheorem{claim}[theorem]{Claim}

\theoremstyle{definition}

\newtheorem{definition}[theorem]{Definition}
\newtheorem*{definition*}{Definition}
\newtheorem{assumption}[theorem]{Assumption}

\theoremstyle{remark}
\newtheorem{remark}{Remark}[section]

\newtheorem*{notation*}{Notation}

\numberwithin{equation}{section}			

\newcommand{\dd}{\mathrm{d}}		



\renewcommand{\tilde}{\widetilde}
\newcommand{\ol}{\overline}





%


\newcommand{\cS}{{\ensuremath{\mathcal S}}}
\newcommand{\cR}{{\ensuremath{\mathcal R}}}




\renewcommand{\tilde}{\widetilde}

\newcommand{\bbT}{\mathbb{T}}

\newcommand{\bbq}{\boldsymbol{q}}

\newcommand{\bq}{\boldsymbol{q}}

\newcommand{\cT}{{\ensuremath{\mathcal T}} }
\newcommand{\T}{{\ensuremath{\mathcal T}} }
\newcommand{\cTi}{{\ensuremath{\mathcal{T}_\infty}} }
\newcommand{\Ti}{{\ensuremath{\mathcal{T}_\infty}} }
\newcommand{\ET}{E_{\T}}
\newcommand{\PT}{P_{\T}}

\newcommand{\PTi}{P_{\Ti}}

\newcommand{\Spine}{\mathrm{Spine}}

\newcommand{\ceq}{\coloneqq}
\newcommand{\eqc}{\eqqcolon}


\title[Recurrence of the critical snake in random conductance]{Recurrence and transience of the critical random walk snake in random conductances}

\author[A. Legrand]{Alexandre Legrand}
\address{Université Claude Bernard Lyon 1, Institut Camille Jordan, UMR 5208. 43 boulevard du 11 novembre 1918, 69622 Villeurbanne Cedex, France}
\email{legrand@math.univ-lyon1.fr}
\author[C. Sabot]{Christophe Sabot}
\address{Université Claude Bernard Lyon 1, Institut Camille Jordan, UMR 5208. 43 boulevard du 11 novembre 1918, 69622 Villeurbanne Cedex, France; and Institut Universitaire de France}
\email{sabot@math.univ-lyon1.fr}
\author[B. Schapira]{Bruno Schapira}
\address{Aix-Marseille Université, Institut de Mathématiques de Marseille, UMR 7373, 39 rue F. Joliot Curie, 13453 Marseille Cedex 13, France}
\email{bruno.schapira@univ-amu.fr}

\keywords{Random snake, critical branching random walk, random conductances, recurrence, transience, ergodicity}

\makeatletter
\@namedef{subjclassname@2020}{%
	2020 Mathematics Subject Classification}
\makeatother
\subjclass[2020]{Primary:  60K37, 60J80 ; Secondary: 82B41, 82D30.}

\date{}

\begin{document}
\begin{abstract}
In this paper we study the recurrence and transience of the $\bbZ^d$-valued branching random walk in random environment indexed by a critical Bienaym\'e-Galton-Watson tree, conditioned to survive. The environment is made either of random conductances or of random traps on each vertex. We show that when the offspring distribution is non degenerate with a finite third moment and the environment satisfies some suitable technical assumptions, then the process is recurrent up to dimension four, and transient otherwise. The proof is based on a truncated second moment method, which only requires to have good estimates on the quenched Green's function.
\end{abstract}

\maketitle

\section{Introduction}
In this paper we study the recurrence and transience of the $\bbZ^d$-valued branching random walk in an environment made of random conductances, indexed by a Bienaymé-Galton-Watson (BGW) tree conditioned to survive. 
While general criteria now exist for this question in case of supercritical offspring distribution, see in particular~\cite{CP07, GantertMuller,Muller08}, we focus here on the critical case, which has received much less attention so far, at the exception of~\cite{BC12,LGL16}, to our knowledge. See also~\cite{BP94,BP94b} for earlier results on this question for general tree-indexed random walks.

\subsection{Statement of the result}
Let $\bbq=(q_k)_{k\geq0}$ be a critical offspring distribution (that is $\sum_{k\geq0} kq_k=1$) such that $\sum_{k\ge 0} k^3 q_k<\infty$ and $\gs^2\ceq\sum_{k\geq0} k(k-1)q_k>0$ . In the following, let $\Ti$ denote \emph{Kesten's tree}, i.e. the critical BGW tree with offspring distribution $\bbq$ conditioned to survive (see~\cite{Dur03, Kes86} or~\cite[Ch. 12]{LP16} for details).

For $x,y\in\bbZ^d$, we write $x\sim y$ if $|x-y|=1$, where $|\cdot|$ is the usual Euclidean norm in $\R^d$. In the following, an \emph{environment} $\go\ceq(\go_{x,y})_{x,y\in\bbZ^d}$ shall denote a family of random, non-negative weights on $(\bbZ^d)^2$. In this paper we restrict ourselves to the following two types of environments.

\begin{definition}\label{def:go}
$(i)$ \emph{Random conductances:} For $x,y\in\bbZ^d$, one has $\go_{x,y}=\go_{y,x}>0$ if $x\sim y$, and $\go_{x,y}=0$ otherwise.

$(ii)$ \emph{Random traps:} There exists some (random) $\rho_x\in[0,1)$, $x\in\bbZ^d$, such that for $x,y\in\bbZ^d$,
\begin{equation}
\go_{x,y}\,=\,\begin{cases}
\rho_x/(1-\rho_x) & \text{if }x=y,\\
1/(2d) & \text{if }x\sim y,\\
0 &\text{otherwise.}
\end{cases}
\end{equation}
\end{definition}
The names ``random conductances'' and ``random traps'' used throughout this paper may be slightly abusive when compared to the rest of the literature, but we believe that they actually help the understanding of our results and what they entail. 
Depending on the nature of $\go$, we also define for $x\in\bbZ^d$,
\begin{equation}\label{eq:defpi}
\pi_\go(x)\,\ceq\,\begin{cases} \sum_{z\sim x}\go_{x,z} & \text{for random conductances},\\
(1-\rho_x)^{-1} & \text{for random traps}.
\end{cases}
\end{equation}  
Then, letting $p^\go=(p^\go_{x,y})_{x,y\in\bbZ^d}\ceq(\go_{x,y}/\pi_\go(x))_{x,y\in\bbZ^d}$ for a fixed realization $\go$, this defines the transition probabilities of a Markov chain in $\bbZ^d$, with invariant measure $\pi_\go$.

Therefore, the \emph{critical random walk snake} $\cS_\cTi:\cTi\to\bbZ^d$ in random environment $\go$ is defined by the (branching) random walk on $\bbZ^d$, indexed by Kesten's tree $\cTi$, and with transition probabilities $p^\go$. We let $\bbP$ denote the law of the environment $\go$, and $\bP$, $\bP^\go$ denote respectively the \emph{annealed} and \emph{quenched} laws of the critical snake (in particular $\bP=\bbE\bP^\go$: precise definitions are provided below).

First, we provide a 0--1 law for the recurrence of the critical random walk snake on $\bbZ^d$. This is achieved under the following assumption. In the following, $\tau_x$, $x\in\bbZ^d$, denote the shift operator, i.e. $(\tau_x\go)_{y,z}=\go_{y+x,z+x}$.

\begin{assumption}\label{assum:erg}[Stationarity and ergodicity]
The law $\bbP$ is stationary and ergodic with respect to translations $\tau_x$, $x\in\bbZ^d$.
\end{assumption}

\begin{proposition}\label{prop:01law}
Suppose that the environment is made either of random conductances or random traps, that Assumption~\ref{assum:erg} holds, and that $\bbE \pi_\go(0) <+\infty$. Then we have either:

$(i)$ The critical random walk snake is recurrent, and\[\bP(\forall\, x\in\bbZ^d, x\text{ is visited infinitely often})=1,\]or,

$(ii)$ The critical random walk snake is transient, and\[\bP(\exists\, x\in\bbZ^d, x\text{ is visited infinitely often})=0.\]
\end{proposition}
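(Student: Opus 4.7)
The plan is to reduce the dichotomy to a $0$--$1$ law for $p \ceq \bP(N_0 = \infty)$ by a short connectivity argument, and then to establish this $0$--$1$ law by combining the spine decomposition of Kesten's tree, the ergodicity of the environment seen from the walker, and a conditional Kolmogorov $0$--$1$ argument for the bushes.

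First, let $R \ceq \{x \in \bbZ^d : N_x = \infty\}$. If $x \in R$ then the snake visits $x$ at infinitely many tree vertices $v$, and the children of each such $v$ receive independent positions distributed as $p^\go(x,\cdot)$, which by Definition~\ref{def:go} charges every neighbor $y \sim x$ with positive probability. The second Borel--Cantelli lemma applied to the conditionally independent events ``$v$ has a child at $y$'' forces $y \in R$, and iterating along nearest-neighbor paths gives $R = \bbZ^d$ whenever $R \neq \emptyset$. Consequently $\bP(\exists x : N_x = \infty) = \bP(\forall x : N_x = \infty) = p$, and it suffices to show $p \in \{0,1\}$.

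Set $F(\go) \ceq \bP^\go(N_0 = \infty)$ and, for $n \ge 0$, let $N_0^{(n)}$ be the number of visits to $0$ by the sub-snake rooted at the $n$-th spine vertex $S_n$. The complement of this subtree is a finite portion of $\cTi$, so contributes finitely many visits and $\{N_0 = \infty\} = \{N_0^{(n)} = \infty\}$ a.s. By the regeneration property of Kesten's tree, conditionally on $S_n$ the sub-snake at $S_n$ is distributed as a Kesten snake issued from $S_n$ in environment $\go$; translating to origin and applying the connectivity step in the shifted environment $\tilde\go_n \ceq \tau_{S_n}\go$ yields
\begin{equation*}
\bP^\go\bigl(N_0 = \infty \,\big|\, \cF_n^S\bigr) \,=\, F(\tilde\go_n) \qquad \bP^\go\text{-a.s.,}
\end{equation*}
with $\cF_n^S \ceq \sigma(S_0,\ldots,S_n)$. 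Averaging shows that $F$ is harmonic for the chain $(\tilde\go_n)$, which is the environment seen from the walker. Under Assumption~\ref{assum:erg} and $\bbE\pi_\go(0) < \infty$, this chain is stationary (by reversibility of $p^\go$, available in both environment types) and ergodic under the size-biased measure $\pi_\go(0)\bbP/\bbE\pi_\go(0)$; its bounded harmonic functions are therefore $\bbP$-a.s.\ constant, so $F \equiv c$ for some $c \in [0,1]$.

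It remains to prove $c \in \{0,1\}$. L\'evy's upward theorem applied to $\mathbf{1}_{\{N_0 = \infty\}}$ along the filtration $(\cF_n^S)$, combined with the harmonicity identity, yields $\bP^\go(N_0 = \infty \mid \cF_\infty^S) = c$ a.s. On the other hand, conditionally on $\cF_\infty^S$ and on $\go$, the numbers of visits to $0$ made by the bushes attached to $S_0, S_1, \ldots$ are independent (each bush being a finite BGW tree equipped with an independent BRW), while the spine contribution $\sum_n \mathbf{1}_{\{S_n = 0\}}$ is $\cF_\infty^S$-measurable; therefore either the spine is recurrent and $N_0 = \infty$ surely, or $\{N_0 = \infty\}$ is a tail event of the conditionally independent sequence of bush contributions. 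Kolmogorov's $0$--$1$ law then forces $\bP^\go(N_0 = \infty \mid \cF_\infty^S) \in \{0,1\}$ a.s., so $c \in \{0,1\}$. The most delicate step, where the hypotheses $\bbE\pi_\go(0) < \infty$ and Assumption~\ref{assum:erg} are crucial, is the ergodicity of the environment seen from the walker in both the conductance and trap setups.
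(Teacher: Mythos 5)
Your argument is correct, and while the connectivity step coincides with the paper's (with the same mild subtlety: the conditional independence of the offspring of the infinitely many visiting vertices really needs a stopping-line justification, which the paper itself only delegates to a footnote), the $0$--$1$ part takes a genuinely different route. The paper encodes the whole annealed snake as a point of an explicit dynamical system $(\bbA,\cA,\tilde\bP,\cR)$ --- environment, spine positions, bushes --- proves that $\cR$ preserves $\tilde\bP$ and is ergodic (Lemma~\ref{lem:ergodicity}), and then observes that the recurrence event is $\cR$-invariant. You instead work with the quenched function $F(\go)=\bP^\go(N_0=\infty)$, use the spine decomposition together with the connectivity step (to replace visits to $-S_n$ by visits to $0$ in the shifted environment) to show that $F$ is harmonic for the kernel $R$ of~\eqref{eq:ergodicity:tildeR}, invoke the stationarity and ergodicity of $\bbQ$ from Proposition~\ref{prop:particleviewpointmeasure} to get $F\equiv c$, and finally pin $c\in\{0,1\}$ by L\'evy's upward theorem plus a conditional Kolmogorov $0$--$1$ law for the bush contributions given $\go$ and the spine. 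Both proofs rest on the same Proposition~\ref{prop:particleviewpointmeasure}, and your harmonicity-plus-martingale step is a close cousin of the martingale $\phi(\tau_{X_n}\go)$ in the paper's proof of Lemma~\ref{lem:ergodicity}.$(iii)$; the real substitution is that Kolmogorov's $0$--$1$ law on the conditionally independent bush coordinates replaces the ergodicity of the full product system. What the paper's construction buys is a reusable statement (every $\cR$-invariant event has probability $0$ or $1$, and Birkhoff's theorem applies to additive functionals of the snake); what your route buys is economy, since one never needs to introduce the space $\bbA$ or prove ergodicity of the extended system for this single application.
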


Notice that this 0--1 law under the annealed distribution $\bP$ implies the quenched 0--1 law: for any event $A$, $\bP(A)=0$ implies $\bP^\go(A)=0$ for $\bbP$-a.e. $\go\in\gO$.

Then we study the recurrence or transience of the critical random walk snake in random conductances. Before stating our main result, we introduce the following additional assumption.

\begin{assumption}\label{assum:core}[Finite range dependence]
There exists $R\in\N$ such that for any $x\in\bbZ^d$, the families of random variables $(\go_{x,y}: y\sim x)$ and $(\go_{z,z'}: |x-z|\geq R, z\sim z')$ are independent.
\end{assumption}

We now present our main theorem. In the following, we write $\go\in L^p(\bbP)$, $p\geq1$ whenever $\go_{x,y}\in L^p(\bbP)$ for all $x,y\in\bbZ^d$, $x\sim y$.
\begin{theorem}\label{thm:main}
$(i)$ Let $d\geq 5$ and $p,q\in(1,+\infty]$ such that $1/p+1/q< 2/d$. For any conductance environment $\go\sim\bbP$ satisfying Assumption~\ref{assum:erg} and such that $\go\in L^p(\bbP)$, $\go^{-1}\in L^q(\bbP)$, the critical random walk snake is transient $\bP$-a.s.. 

$(ii)$ Let $d\leq 4$. There exists a constant $p \in [1,+\infty)$ such that, for any conductance environment $\go\sim\bbP$ satisfying Assumptions~\ref{assum:erg},~\ref{assum:core} and such that $\go,\go^{-1} \in L^{p}(\bbP)$, the critical random walk snake is recurrent $\bP$-a.s..
\end{theorem}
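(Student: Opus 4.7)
\emph{Plan.} The proof combines the spine decomposition of Kesten's tree $\cTi$ with quenched estimates on the Green's function $G^\go(y,x):=\sum_{n\ge 0}p_n^\go(y,x)$. Recall that $\cTi$ carries an infinite spine $(v_n)_{n\ge 0}$ with size-biased offspring at each $v_n$; removing the distinguished spine descendant leaves, in expectation, $\gs^2$ independent critical BGW subtrees hanging off $v_n$, and along the spine the positions $X_n:=\cS_\cTi(v_n)$ perform a quenched random walk with transitions $p^\go$. Since the expected population at each generation of a critical BGW is identically one, the expected occupation of a site $x$ by the snake restricted to a subtree rooted at $y$ equals $G^\go(y,x)$. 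A \emph{many-to-one} computation (using $\sum_y p^\go(x,y)G^\go(y,z)=G^\go(x,z)-\mathbf{1}_{x=z}$) then gives, for $N(x):=\#\{u\in\cTi:\cS_\cTi(u)=x\}$,
\[\bbE^\go[N(x)]\,=\,(1-\gs^2)\,G^\go(0,x)\,+\,\gs^2\sum_{y\in\bbZ^d}G^\go(0,y)\,G^\go(y,x),\]
while a \emph{many-to-two} computation, conditioning on the most recent common ancestor of pairs $u,u'\in\cTi$, expresses $\bbE^\go[N(x)^2]$ in terms of triple sums $\sum_y G^\go(0,y)\,G^\go(y,x)^2$, with combinatorial weights involving the third moment of $\bbq$.

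For part~(i), the moment condition $\tfrac{1}{p}+\tfrac{1}{q}<\tfrac{2}{d}$ together with Assumption~\ref{assum:erg} places the environment in the regime of the Andres--Deuschel--Slowik quenched Gaussian heat-kernel upper bounds, yielding $G^\go(x,y)\le C_\go(x)\,|x-y|^{-(d-2)}$ with $C_\go(x)$ enjoying enough integrability to apply Hölder's inequality in $(p,q)$. The convolution $\sum_y |y|^{2-d}|y-x|^{2-d}\asymp|x|^{4-d}$ is finite for every $x$ when $d\ge 5$, hence $\bbE^\go[N(x)]<\infty$ for $\bbP$-a.e.\ $\go$, so $N(x)<\infty$ $\bP^\go$-a.s.\ for every $x$; this is transience, and Proposition~\ref{prop:01law} promotes it to the stated almost sure statement.

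For part~(ii), with $d\le 4$ the first moment is infinite, so we use a truncated second moment. Set $T:=r^2$, $B_r:=[-r,r]^d$, and let $N_r(0)$ count $u\in\cTi$ with $|u|\le T$, $\cS_\cTi(u)=0$, and whose entire ancestral snake path stays in $B_r$. The many-to-one formula yields $\bbE^\go[N_r(0)]\to\infty$ of order $r^{4-d}$ when $d<4$ (respectively $\log r$ when $d=4$). Two-sided quenched heat-kernel estimates, for which Assumption~\ref{assum:core} is invoked to decouple the random prefactors from the bulk diffusive behaviour of the walk, then feed into the many-to-two formula to produce $\bbE^\go[N_r(0)^2]\le C\,\bbE^\go[N_r(0)]^2$, uniformly in $r$. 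The Paley--Zygmund inequality gives $\bP^\go\bigl(N_r(0)\ge c\,\bbE^\go[N_r(0)]\bigr)\ge c'>0$; letting $r\to\infty$, the origin is visited infinitely often with positive probability, and the 0--1 law of Proposition~\ref{prop:01law} upgrades this to almost sure recurrence of every site.

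The principal obstacle is producing two-sided quenched Green's-function estimates strong enough to drive the second-moment computation in part~(ii): one needs the random prefactors $C_\go$ to have sufficient moments and to be essentially independent of the walk's large-scale behaviour, so that the resulting cubic Green's-function sums recover the classical second-moment-versus-square-of-first-moment comparison for critical branching random walk (for which $d=4$ is the critical dimension). This is exactly where the finite-range Assumption~\ref{assum:core} is used in part~(ii); in part~(i) a single Hölder estimate against an already convergent convolution sidesteps this issue, so Assumption~\ref{assum:core} is not required there.
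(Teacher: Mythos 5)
Your overall strategy matches the paper's: a first-moment (many-to-one/Green's function) computation for transience when $d\ge 5$, and a truncated second-moment method with Paley--Zygmund plus the 0--1 law for recurrence when $d\le 4$. Part~(i) is essentially the paper's argument (the paper phrases the first moment as $\gs^2\sum_n (n+1)P^\go_n(0,0)$ and uses the on-diagonal heat-kernel bound rather than the off-diagonal Green's function convolution, which spares you from controlling the random thresholds $N_2(x)$ at every site, but your version goes through with that extra care).

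The genuine gap is in part~(ii): you assert a \emph{quenched} second-moment comparison $\bbE^\go[N_r(0)^2]\le C\,\bbE^\go[N_r(0)]^2$ with a constant uniform in $r$. The quenched Green's function estimates available under Assumption~\ref{assum:core} (from~\cite{AH21}) are two-sided only outside random exceptional scales: the upper bound $g^\go(x,y)\le c|x-y|^{2-d}$ requires $|x-y|\ge N_2(x)$ and the lower bound requires $|x-y|\ge N(x)$, where $N_2(x)$, $N(x)$ and $g^\go(x,x)$ are unbounded as $x$ ranges over $\gL_m$. The many-to-two triple sum $\sum_{x,y}g^m(0,x)g^m(x,y)g^m(y,0)^2\pi_\go(x)\pi_\go(y)$ therefore contains contributions from pairs with $|x-y|<N_2(x)$ at arbitrary locations, and there is no evident way to bound these by a deterministic constant times the square of the quenched first moment, uniformly in $m$, for a fixed $\go$. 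The paper resolves this by running the entire comparison under the \emph{annealed} law: it proves $\bE[\cL_m^2]\le c\,\bbE[\bE^\go[\cL_m]^2]\le c'\,\bE[\cL_m]^2$, using stationarity, Cauchy--Schwarz/H\"older, the tail bounds on $N_1, N_2, N$, and the $L^\beta(\bbP)$ integrability of $g^\go(0,0)$ to absorb the exceptional regions; Paley--Zygmund is then applied under $\bP$, which is exactly what the annealed 0--1 law of Proposition~\ref{prop:01law} requires. The same remark applies to your first-moment lower bound of order $r^{4-d}$: quenched it only holds on a good event such as $\{\forall z\in\gL_{Kn},\,N_2(z)\le n\}$, and the paper takes annealed expectations restricted to that event and to the annulus $n/2\le|x|\le n$. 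Your plan should be reformulated annealed throughout part~(ii); as written, the uniform quenched constant $C$ is unjustified and is the crux of the whole dimension-$\le 4$ argument. Separately, your Green's-function second-moment machinery only makes sense for $d\in\{3,4\}$; the cases $d\le 2$ need the (much easier) observation that the spine walk itself is already recurrent, which the paper treats via Proposition~\ref{prop:rwre:rec}.
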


\begin{remark}
$(i)$ Notice that $\go \in L^{p}(\bbP)$ with $p\ge 1$ implies $\bbE \pi_\go(0) <+\infty$, hence the 0--1 law from Proposition~\ref{prop:01law} holds under those assumptions.

$(ii)$ In Theorem~\ref{thm:main}.$(ii)$, the integrability assumption on $\go,\go^{-1}$ is stronger than in Theorem~\ref{thm:main}.$(i)$. We do not provide an explicit value for the constant $p$: this is further commented below.

$(iii)$ The Assumption~\ref{assum:core} in Theorem~\ref{thm:main}.$(ii)$ can be weakened significantly, in a way that covers e.g. environments $\go$ that satisfy a polynomial mixing property. This is discussed more precisely in Remark~\ref{rem:generalassumption} below.
\end{remark}

Let us briefly comment this result: Assumption~\ref{assum:core} and the integrability of $\go,\go^{-1}$ are required in~\cite{AH21} to obtain estimates on the \emph{Green's function} of the random walk in random environment, from which we deduce the theorem. However, we believe that our result holds for the random conductance environment under much weaker assumptions. 
To support that idea, we consider a random traps environment (see e.g.~\cite{BAC06, Bou92}), which is simpler to study than the random conductances environment, while preserving the ``trapping effect'' it has on the random walk. We provide an analogous result to Theorem~\ref{thm:main} under the following assumption. Recall~\eqref{eq:defpi}.

\begin{assumption}\label{assum:traps}[Bounded long-range correlations]
There exists $R_0>0$ such that
\begin{equation}\label{eq:assum:traps}
\sup\left\{\bbE[\pi_\go(x)\pi_\go(y)]\;;\, x,y\in\bbZ^d\,,\, |x-y|>R_0\right\}\,<\,+\infty\,.
\end{equation}
\end{assumption}

\begin{theorem}\label{thm:main:traps}
Let $\go$ be a random traps environment. 

$(i)$ Let $d\geq 5$, and assume $\sup_{x\in\bbZ^d}\bbE \pi_\go(x) <+\infty$. Then, the critical snake is transient $\bP$-a.s.. 

$(ii)$ Let $d\leq 4$, suppose that Assumptions~\ref{assum:erg},~\ref{assum:traps} hold and that $\bbE \pi_\go(0) <+\infty$. Then the critical snake is recurrent $\bP$-a.s..
\end{theorem}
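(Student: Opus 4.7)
The plan is to analyze the number of visits of the snake to the origin, $N_0 \ceq \#\{v \in \Ti : S_v = 0\}$, via a first moment argument for part (i) and a truncated second moment method for part (ii); in each case Proposition~\ref{prop:01law} upgrades the conclusion to the desired recurrence/transience statement. The independence between $\Ti$, the environment, and the walk increments gives the decomposition
\begin{equation*}
\bE[N_0^{(n)}] \;=\; \sum_{k=0}^n \bE[|\Ti_k|]\, \bbE\bigl[p^\go_k(0,0)\bigr], \qquad N_0^{(n)} \ceq \#\{v \in \Ti : |v| \leq n,\, S_v = 0\},
\end{equation*}
where $p^\go_k(x,y) \ceq \bP^\go_x(S_k = y)$ denotes the quenched heat kernel. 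For Kesten's tree the spine decomposition yields $\bE[|\Ti_k|] = 1 + k\gs^2$. In a random trap environment the embedded walk (self-loops erased) is simple random walk, and a standard time-change argument under $\sup_x \bbE[\pi_\go(x)] < \infty$ produces the annealed heat-kernel bound $\bbE[p^\go_k(0,0)] \leq C k^{-d/2}$ for all $k \geq 1$.

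Part (i) then follows by summing: for $d \geq 5$, $\bE[N_0] \leq C \sum_{k \geq 1} k^{1 - d/2} < \infty$, so $N_0 < \infty$ $\bP$-a.s.\ and Proposition~\ref{prop:01law} gives transience.

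For part (ii), the same decomposition shows $\bE[N_0^{(n)}] \to \infty$ in $d \leq 4$ (logarithmically at $d = 4$, polynomially below), and the task is to match this with an upper bound on the second moment. Decomposing pairs $(v,w) \in \Ti_{\leq n}^2$ by the depth $\ell$ of their most recent common ancestor $u = v \wedge w$ and summing over the intermediate position $x = S_u$,
\begin{equation*}
\bE[(N_0^{(n)})^2] \;=\; \sum_{\ell, i, j \leq n}\sum_{x \in \bbZ^d} \bE[T_{\ell,i,j}]\; \bbE\bigl[p^\go_\ell(0,x)\, p^\go_{i-\ell}(x,0)\, p^\go_{j-\ell}(x,0)\bigr],
\end{equation*}
where $T_{\ell,i,j}$ counts the pairs of depths $(i,j)$ branching at depth $\ell$ in $\Ti$, and the two return walks from $x$ are conditionally independent given $\go$. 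Assumption~\ref{assum:traps} allows one to decorrelate the environment along these two walks at spatial scales beyond $R_0$, while the short-range contribution is controlled by truncating out the atypical trapping events, i.e.\ vertices where $\rho_x$ is too close to $1$; this truncation is exactly what the ``truncated'' moment method refers to. The resulting product bound reduces the three-point annealed expectation to a product of two-point ones, and summing against the tree weights yields $\bE[(N_0^{(n)})^2] \leq C (\bE[N_0^{(n)}])^2$ by comparison with the analogous simple random walk computation of~\cite{LGL16}. Paley--Zygmund then gives $\bP(N_0^{(n)} \geq \eps \bE[N_0^{(n)}]) \geq c > 0$; letting $n \to \infty$ gives $\bP(N_0 = \infty) > 0$, and Proposition~\ref{prop:01law} upgrades this to recurrence.

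The main obstacle is the second-moment estimate: the three-point annealed expectation couples the environment along two random walks from $x$ that may revisit common sites (sharing $\rho_y$'s) or hit a deep trap. The bounded long-range correlation of Assumption~\ref{assum:traps} together with the truncation of the $\rho_x$'s are precisely the tools needed to tame these contributions. The analysis is substantially simpler than its counterpart for random conductances in Theorem~\ref{thm:main}, because here the embedded walk is exactly simple random walk, so only the temporal (holding time) component of the environment carries nontrivial randomness.
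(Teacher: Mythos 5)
Your overall strategy (first moment for $d\ge5$, truncated second moment plus Paley--Zygmund and the 0--1 law for $d\le4$, with truncation at deep traps and Assumption~\ref{assum:traps} used to decorrelate) is the paper's strategy. But part $(i)$ as written has a genuine gap. The annealed pointwise bound $\bbE[p^\go_k(0,0)]\le Ck^{-d/2}$ is \emph{false} under the hypothesis $\sup_x\bbE\pi_\go(x)<+\infty$: since $\bP^\go_0(X_k=0)\ge\rho_0^k$, one has $\bbE[p^\go_k(0,0)]\ge\bbE[\rho_0^k]\gtrsim k^{-\alpha}$ whenever $\bbP(1-\rho_0<\eps)\asymp\eps^{\alpha}$, and $\bbE[(1-\rho_0)^{-1}]<+\infty$ only forces $\alpha>1$; taking $\alpha\in(1,2]$ kills your bound and even makes $\sum_k k\,\bbE[p^\go_k(0,0)]=+\infty$. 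Worse, the unnormalized annealed first moment $\bE[N_0]$ genuinely can be infinite here: it contains terms of the form $g(0,0)^2\,\bbE[\pi_\go(0)^2]$, and no second moment of $\pi_\go$ is assumed. The statement is nevertheless true because what is needed is \emph{quenched} finiteness: summing over time first and using that the quenched Green's function of the trap walk equals the homogeneous one (Proposition~\ref{prop:green:traps}), one gets $\bE^\go[N_0]/\pi_\go(0)=\gs^2\sum_x\pi_\go(x)g(0,x)^2$, whose $\bbP$-expectation is at most $c\sum_x|x|^{2(2-d)}<+\infty$ for $d\ge5$ because each site's weight now appears only \emph{linearly}. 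This is the paper's route, and the division by $\pi_\go(0)$ (plus working with Green's functions rather than fixed-time kernels) is not cosmetic --- it is what makes the argument survive heavy-tailed traps.

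For part $(ii)$ the sketch points in the right direction but leaves unverified exactly the two estimates that constitute the proof. First, your fixed-time three-point expectation $\bbE[p^\go_\ell(0,x)p^\go_{i-\ell}(x,0)p^\go_{j-\ell}(x,0)]$ couples the holding times of three walk segments, and Assumption~\ref{assum:traps} (a bound on $\bbE[\pi_\go(x)\pi_\go(y)]$ for $|x-y|>R_0$ only) does not by itself decorrelate such a quantity; the resolution is again to sum over times first, so that all Green's functions become deterministic and the only randomness left is a product $\pi_\go(x)\pi_\go(y)$, which is precisely what Assumption~\ref{assum:traps} controls. Second, once you truncate (the paper kills the snake on $A_\go=\{x:\pi_\go(x)\ge R|x|^2\}$ and outside $\gL_m$), you must check that the \emph{truncated} first moment still diverges --- i.e.\ that $\bbE[\tilde g^{Kn}(0,x)]\ge c|x|^{2-d}$, which uses Markov's inequality and $\bbE\pi_\go(0)<+\infty$ to show the killed mass is a small fraction --- and then apply Paley--Zygmund to the truncated count, not to $N_0^{(n)}$; the comparison $\bE[(N_0^{(n)})^2]\le C(\bE[N_0^{(n)}])^2$ for the untruncated quantity is exactly what one cannot prove. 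Finally, your second-moment tree decomposition implicitly needs $\bE[T_{\ell,i,j}]<+\infty$, which uses the finite third moment of $\bbq$ through the size-biased reproduction along the spine; this should be said explicitly.
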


\begin{remark}
When $d\leq2$, it is well-known that the (non-branching) random walk in random conductances or traps is almost surely recurrent (supposing additionally that $\sup_{x\in\bbZ^d}\bbE\pi_\go(x)<+\infty$ in the case of conductances). This directly implies that the critical snake is $\bP$-a.s. recurrent when $d\leq2$ under assumptions much weaker than in Theorems~\ref{thm:main}.$(ii)$,~\ref{thm:main:traps}.$(ii)$ and even Proposition~\ref{prop:01law}. This statement is presented more precisely below, see in particular Proposition~\ref{prop:rwre:rec}.
\end{remark}

As mentioned above, in Theorem~\ref{thm:main}.$(ii)$ we do not try to achieve an optimal value for $p$: indeed, we strongly believe that $p=1$ should be sufficient (similarly to Theorem~\ref{thm:main:traps}), but to our knowledge this cannot be achieved with our methods at this time.

The proofs of both Theorems~\ref{thm:main} and~\ref{thm:main:traps} rely on a second moment method, which only requires to have good estimates on the quenched Green's function. In the case of constant conductances or uniformly elliptic environment, these estimates are well known: in particular, similarly as in~\cite{LGL16}, we obtain an alternate proof to that of Benjamini and Curien~\cite{BC12}, which was based on the notion of unimodularity and mass transport techniques, together with the result of Kesten~\cite{Kesten95} on concentration of the snake in a ball. 

Concerning Theorem~\ref{thm:main:traps} we use an improved second moment method, and we kill the walk when it reaches deep traps. While we believe that a similar method could be used in the setting of Theorem~\ref{thm:main}, it would be much more technical and we have refrained to pursue in this direction. 

\subsection{Some comments and open questions}
\emph{Decay of the heat kernel.} 
Our results raise some interesting questions about the decay of the quenched heat kernel of a simple random walk in random conductance, as well as for the recurrence/transience of a critical random snake in random conductance under less restrictive hypotheses. First notice that under Assumption~\ref{assum:erg}, 
denoting by $\bP^\go_0(X_{2n}=0)$ the probability that a random walk in random conductance $\go$ starting from the origin returns to the origin in $2n$ steps, then either 
\begin{equation}\label{series.heatkernel}
\sum_{n\ge 1} n\bP^\go_0(X_{2n}=0)<+\infty,
\end{equation}
holds for $\bbP$-a.e.~$\go$, or the series in~\eqref{series.heatkernel} is infinite for $\bbP$-a.e.~$\go$. Now we can ask the following natural questions. 
\begin{enumerate}
\item We will show in Lemma~\ref{lem:trans} below, that the critical random snake is transient as soon as~\eqref{series.heatkernel} is satisfied 
for $\bbP$-a.e. $\go$, in any dimension $d\ge 1$. It is known, that this condition cannot be satisfied in dimensions $1$ and $2$, at least under the hypothesis that $\bbE [\pi_\go(0)] <+\infty$, since in this case it is known that the simple random walk, and a fortiori the critical random snake, is recurrent (see below for more details). But the question of whether or not there exists a distribution of random conductances such that~\eqref{series.heatkernel} holds in dimension $3$ or $4$ is still open, to the best of our knowledge. 
\item Likewise, the question of whether or not, in dimension $d\ge 5$,~\eqref{series.heatkernel} is always satisfied for any distribution of random conductances seems to be also open. Note that in~\cite{BBHK08}, it is shown that for bounded conductances in $d\ge 5$, one always has $\lim_{n\to \infty} n^2\bP^\go_0(X_{2n}=0)=0$, which is close to show~\eqref{series.heatkernel}, but not quite. In the other direction, it is also shown in~\cite{BBHK08} that for any $\kappa>1/d$, there exists a law of random conductances for which $n^2\bP^\go_0(X_{2n}=0)\ge C(\go)e^{-(\log n)^\kappa}$, and furthermore for any sequence $(\lambda_n)_{n\ge 0}$ increasing, converging to infinity, there exists a law of random conductances such that $\bP^\go_0(X_{2n}=0)\ge \tfrac{C(\go)}{\lambda_n n^2}$, along a subsequence. But none of these bounds  contradicts the validity of~\eqref{series.heatkernel}. 
\item Finally one can ask whether the condition $\sum_{n\ge 1} n\bP^\go_0(X_{2n}=0)=+\infty$, for $\bbP$-a.e.~$\go$, always implies recurrence of the critical random snake.
\end{enumerate}

\emph{Hitting probability of a finite BRWRE.} 
For $x\in\bbZ^d$, $d\geq1$, let $\tilde\cL(x)$ denote the local time at $x$ of a branching random walk in random environment $(\cS_{\tilde\cT},{\tilde\cT})$, where $\tilde\cT$ is a critical BGW tree \emph{not} conditioned to survive. Our second moment method can be adapted to prove that, under the assumptions of Theorem~\ref{thm:main}, one has, 
\[
\bP\big(\tilde\cL(x)>0\big)\gtrsim \begin{cases}
|x|^{-2} & \text{if }d = 3, \\ \log(|x|)^{-1}|x|^{-2} & \text{if }d=4, \\|x|^{-(d-2)} & \text{if }d\geq 5\,.
\end{cases}
\]
Matching upper bounds follow from Markov's inequality for $d \ge 5$, but would require additional arguments in lower dimension, as for the lower bounds in dimension one and two. 
In the wake of the discussion started by Benjamini and Curien~\cite[Section~3.2]{BC12}, these results would be needed to study more intricate questions on the behavior of the snake in random conductances, such as obtaining in the recurrent case the variance of the local time at 0 of a ``truncated'' snake, or the growth rate of its range. We believe that answering these questions would require several deep results (such as~\cite{Kesten95}) to be adapted from the homogeneous setting to the random environment case, so we leave them to further work.

\emph{More general random environments.} 
We believe that Theorems~\ref{thm:main} or~\ref{thm:main:traps} should also hold in more general environments, e.g. for the combination of the conductance and trap environments from Definition~\ref{def:go}. However proving this would require substantial additional
work, since one would need to adapt~\cite{AH21} (see in particular Proposition~3.1 therein, as well as~\cite[Proposition~4.7]{ADS16}) to that particular setting. Nonetheless, we do prove that the 0-1 law holds for the combined environment of traps and conductances, see~\eqref{eq:mixedenv} below.

\subsubsection*{Outline of the paper} In Section~\ref{sec:01law} we introduce some precise notation and definition for the critical random walk snake, and we prove the 0--1 law in Proposition~\ref{prop:01law} by rewriting the random process as an ergodic dynamical system. In Section~\ref{sec:estimates} we provide some Green's function estimates on (non-branching) random walks in random environment, many of them coming from~\cite{AH21}. Finally in Section~\ref{sec:transrec}, we prove both Theorems~\ref{thm:main} and~\ref{thm:main:traps}. First we prove the transience for $d\geq5$ with a direct first moment computation, then we prove the recurrence for $d\leq 4$ with the foretold second moment method.

For the sake of completeness, we also present a proof of Proposition~\ref{prop:rwre:rec} ---that is, the recurrence of the random walk in random conductances for $d\leq 2$--- in Appendix~\ref{app:dim2}.

\subsubsection*{Notation} In the remainder of this paper, $c$ denotes a constant that may change from one occurrence to another, and $c_1,c_2,\ldots$ denote constants that may change from one paragraph to another.

\section{Proof of the 0-1 law}\label{sec:01law}
In this section we introduce some notation and prove Proposition~\ref{prop:01law}. This 0-1 law is analogous to~\cite[Propositions~1.2 and~1.3]{CP07}, in the case of a super-critical BRWRE with no death: however, for the critical snake, the proof is very different. It follows from two observations: first, under our assumptions, the environment can be seen \emph{from the point of view of the particle} with an explicit change of measure $\bbQ\sim\bbP$; and second, the critical snake in random environment with law $\bbQ$ can be seen as an \emph{ergodic} dynamical system.

\subsection{Point of view of the particle}
Define
\begin{equation}\label{def:gO}
\gO\;\ceq\;\bigg\{\go\in(\R_+)^{(\bbZ^d)^2}\,;\,\forall\,x\in\bbZ^d,\,\pi_\go(x)\ceq\sum_{z\in\bbZ^d}\go_{x,z}<+\infty \bigg\}\,,
\end{equation}
and let $(\gO,\cF_\gO)$ denote the measurable space of random environment configurations on $\bbZ^d$. For the sake of generality, in this section we consider a \emph{mixed} environment of traps and conductances: more precisely, let $\bbP$ be a probability distribution on $\gO$ such that, $\bbP$-almost surely for all $x,y\in\bbZ^d$,
\begin{equation}\label{eq:mixedenv}
\go_{x,y}\,=\,\begin{cases}
\rho_x/(1-\rho_x) & \text{if }x=y,\\
\go_{y,x} >0  & \text{if }x\sim y,\\
0 &\text{otherwise,}
\end{cases}
\end{equation}
where $\go_{x,y}$, $x\sim y$ is an i.i.d. family of positive variables, and $\rho_x$, $x\in\bbZ^d$ is an i.i.d. family of random variables in $[0,1)$, independent from the $\go$'s. If $\rho_x\equiv0$, $\bbP$ is the law of a random conductance environment; and if $\go_{x,y}\equiv 1/2d$ for all $x\sim y$, it is a random trap environment. We shall prove that Proposition~\ref{prop:01law} holds for any such ``mixed'' environment which satisfies Assumption~\ref{assum:erg} and $\bbE\pi_\go(0)<+\infty$.

For $\bbP$-almost every $\go\in\gO$, the family $p^\go_{x,y}=\go_{x,y}/\pi_\go(x)$, $x,y\in\bbZ^d$, denotes the (\emph{quenched}) transition probabilities of a random walk in random environment (RWRE). Let $(X_n)_{n\geq0}$ denote the RWRE, write $\bP^\go_x$ for its quenched law in $\go\in\gO$ started from $x\in\bbZ^d$, and let $\bP_x\ceq \bbE\bP^\go_x$ be its \emph{annealed} law (when $x=0$, we may omit the subscript).

Recall that $\tau_x$, $x\in\bbZ^d$ denotes the shift operators in $\bbZ^d$. Then, starting from some fixed $\go\in\gO$, the sequence $(\tau_{X_n}\go)_{n\geq0}$ under $\bP^\go_0$ defines a Markov chain on the space of all environments $\gO$, which is called the \emph{point of view of the particle}, and its transition kernel $R$ satisfies for $f:\gO\to\R$ bounded measurable and $\go\in\gO$,
\begin{equation}\label{eq:ergodicity:tildeR}
(Rf)(\go)\,\ceq\, \sum_{z\in\bbZ^d} \frac{\go_{0,z}}{\pi_\go(0)}\, f(\tau_{z}\go)\,.
\end{equation}
We have the following classical result, which follows directly from~\cite[Lemma~2.1 and Proposition~2.3]{Bis11Rev} (see also~\cite[Theorem~1.2]{BS12book}).
\begin{proposition}\label{prop:particleviewpointmeasure}
Let $\go$ be a random environment with law $\bbP$ as in~\eqref{eq:mixedenv}. Suppose that Assumption~\ref{assum:erg} holds, and that $\bbE \pi_\go(0) <+\infty$. Then the Markov chain $(\tau_{X_n}\go)_{n\geq0}$ admits a stationary and reversible probability measure $\bbQ$, given by
\begin{equation}\label{eq:prop:particleviewpointmeasure}
\frac{\dd\bbQ}{\dd\bbP}(\go)\,:=\, \frac{\pi_\go(0)}{\bbE[\pi_\go(0)]}\,>\,0\,.
\end{equation}
In particular, $\bbP$ and $\bbQ$ are absolutely continuous with respect to one another. Moreover, $(\tau_{X_n}\go)_{n\geq0}$ is ergodic under $\bbQ$.
\end{proposition}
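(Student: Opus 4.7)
The plan is to verify each claim in turn, in a self-contained way (rather than quote~\cite{Bis11Rev}). First, observe that under~\eqref{eq:mixedenv} we have $\pi_\go(0)\geq\sum_{z\sim 0}\go_{0,z}>0$ $\bbP$-almost surely, so together with $\bbE[\pi_\go(0)]\in(0,+\infty)$, the ratio $\pi_\go(0)/\bbE[\pi_\go(0)]$ defines a probability density on $(\gO,\cF_\gO)$, and the resulting $\bbQ$ is mutually absolutely continuous with respect to $\bbP$.

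Next, I would establish reversibility (stationarity then follows by taking $g\equiv 1$). For bounded measurable $f,g:\gO\to\R$, unwrap the definitions to get
\[
\bbE[\pi_\go(0)]\int g\,(Rf)\,\dd\bbQ\,=\,\sum_{z\in\bbZ^d}\bbE\bigl[\go_{0,z}\,g(\go)\,f(\tau_z\go)\bigr]\,.
\]
The $z=0$ term is symmetric in $(f,g)$. For $z\neq 0$, I would combine the symmetry $\go_{0,z}=\go_{z,0}$ from~\eqref{eq:mixedenv} with the translation invariance of $\bbP$ (Assumption~\ref{assum:erg}): writing $\go'=\tau_z\go$, one has $\go'_{0,-z}=\go_{z,0}=\go_{0,z}$ and $g(\go)=g(\tau_{-z}\go')$. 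Thus $\bbE[\go_{0,z}g(\go)f(\tau_z\go)]=\bbE[\go_{0,-z}g(\tau_{-z}\go)f(\go)]$, and a change of variable $z\mapsto -z$ in the outer sum yields
\[
\sum_{z}\bbE[\go_{0,z}g(\go)f(\tau_z\go)]\,=\,\sum_{z}\bbE[\go_{0,z}f(\go)g(\tau_z\go)]\,,
\]
which is the reversibility of $R$ with respect to $\bbQ$.

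Finally, for ergodicity of the chain $(\tau_{X_n}\go)$ under $\bbQ$, the strategy is to show that every bounded invariant function is $\bbQ$-a.s. constant. Suppose $f:\gO\to\R$ is bounded and $Rf=f$ in $L^2(\bbQ)$. Since $\bbQ$ is reversible, the associated Dirichlet form satisfies
\[
0\,=\,\langle f,(I-R)f\rangle_\bbQ\,=\,\frac{1}{2}\int\sum_{z\in\bbZ^d}\frac{\go_{0,z}}{\pi_\go(0)}\bigl(f(\tau_z\go)-f(\go)\bigr)^2\,\dd\bbQ(\go)\,.
\]
As $\go_{0,z}>0$ for every $z\sim 0$ $\bbP$-a.s.\ (hence $\bbQ$-a.s.), this forces $f(\tau_z\go)=f(\go)$ for each nearest neighbor $z$ $\bbQ$-a.s.; iterating over paths in $\bbZ^d$ shows that $f$ is invariant under every spatial shift $\tau_x$, $x\in\bbZ^d$. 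By Assumption~\ref{assum:erg} and mutual absolute continuity $\bbQ\sim\bbP$, the $\sigma$-algebra of spatially invariant events is trivial under $\bbQ$, so $f$ is $\bbQ$-a.s. constant.

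The routine part is the algebraic identity for reversibility. The main delicate step is the ergodicity argument: one has to carefully convert invariance under the Markov kernel $R$ (a local object, involving only nearest-neighbor shifts) into invariance under all spatial translations in order to leverage Assumption~\ref{assum:erg}. The Dirichlet form identity, which relies crucially on reversibility and on the $\bbP$-a.s. positivity of each edge weight $\go_{0,z}$ for $z\sim 0$ guaranteed by~\eqref{eq:mixedenv}, is the cleanest way to handle this.
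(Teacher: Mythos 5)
Your proof is correct. The paper itself does not prove this proposition; it simply invokes \cite[Lemma~2.1, Proposition~2.3]{Bis11Rev} (and \cite[Theorem~1.2]{BS12book}), and your argument is precisely the standard proof behind those citations: reversibility of $\bbQ$ from the edge symmetry $\go_{0,z}=\go_{z,0}$ together with translation invariance of $\bbP$, and ergodicity by showing that any bounded $R$-harmonic $f$ has vanishing Dirichlet form, hence is a.s.\ invariant under each nearest-neighbour shift, hence (iterating along lattice paths, using $\bbQ\sim\bbP$ and stationarity of $\bbP$ to propagate the null sets) under all of $\bbZ^d$, at which point Assumption~\ref{assum:erg} forces it to be constant. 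Two minor remarks, neither a gap: the identity $\langle f,(I-R)f\rangle_\bbQ=\tfrac12\int\sum_z\frac{\go_{0,z}}{\pi_\go(0)}(f(\tau_z\go)-f(\go))^2\,\dd\bbQ$ in fact only uses stationarity of $\bbQ$ (via $\int R(f^2)\,\dd\bbQ=\int f^2\,\dd\bbQ$), not reversibility; and at the very end one should recall the textbook equivalence that triviality of bounded $R$-harmonic functions is what is meant by (and implies) ergodicity of the stationary chain $(\tau_{X_n}\go)_{n\ge0}$ on path space --- this is exactly the formulation used in the cited references, so it is fine to take it as given.
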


\subsection{Ergodicity of the snake}\label{sec:ergodicitysnake}
For $\bbP$, $\bbQ$ two probability measures on $(\gO,\cF_\gO)$, let $\bbE\eqc\bbE_\bbP$, resp. $\bbE_\bbQ$, denote the expectation under $\bbP$, resp. $\bbQ$. Let us now define a measure-preserving dynamical system on critical snakes in random environment. We first introduce some notation for the critical snake.

Let $(\bbT,\cF_\bbT)$ denote the usual measurable space of planar trees (i.e. rooted, ordered, locally finite trees). Recall that $\Ti$ denotes \emph{Kesten's tree}: more precisely, it is almost surely composed of an infinite \emph{spine}, denoted $\Spine(\Ti)\subset\Ti$, of individuals reproducing according to the \emph{mass-biased} distribution $(kq_k)_{k\geq0}$; and all other individuals reproduce according to $\bbq$. Also, let $\T$ denote an (almost surely finite) random BGW tree such that all vertices reproduce according to $\bbq$, except for the root which reproduces according to $((k+1)q_{k+1})_{k\geq0}$. As a matter of fact, $\T$ has the same law as the finite descendant tree supported by an element $u\in\Spine(\Ti)$ from the spine of Kesten's tree. Let $\PTi$ (resp. $\PT$) denote the law of $\Ti$ (resp. $\T$).

Let $\bbP$ be a probability measure on $(\gO,\cF_\gO)$ satisfying Assumption~\ref{assum:erg}. 
Let $\go\in\gO$ be a realization of the environment and $x\in\bbZ^d$: then the \emph{quenched} law $\bP^\go_x$ of the critical random walk snake $(\cS_\Ti,\Ti)$ starting from $x$ is defined as follows. Let $\Ti\sim\PTi$, and denote its root $\rho$. Then construct randomly $\cS_\Ti:\Ti\to\bbZ^d$ conditionally to $\Ti$ by induction, letting $\cS_\Ti(\rho)=x$, and for $u,v\in\Ti$ such that $v$ is a child of $u$, letting
\begin{equation}\label{eq:snake:transitions}
\bP^\go_x(\cS_\Ti(v)=z\,|\,\cS_\Ti(u)=y)\,=\,\frac{\go_{y,z}}{\pi_\go(y)}\,,\qquad y,z\in\bbZ^d\,.
\end{equation}
Moreover, conditionally on $\cS_\Ti(u)$, the random variables $\cS_\Ti(v)$, with $v$ a child of $u$, are taken independent of each other. We also define the \emph{annealed} law $\bP_x\ceq\bbE\bP^\go_x$, and for both laws we may omit the subscript $x$ when $x=0$. Notice that we use the same notation for the laws of the critical snake $(\cS_\Ti,\Ti)$ and the RWRE $(X_n)_{n\geq0}$ introduced above, but it will always be clear from context which one is considered. Furthermore, one can define similarly the (quenched or annealed) law of the tree-indexed random walk $(\cS_\T,\T)$, by letting $\cT\sim\PT$ and constructing $\cS_\cT$ by induction with~\eqref{eq:snake:transitions}; again, we abusively use the same notation for the laws.\smallskip

We now present an encoding of critical snake realizations for which we have an explicit, ergodic transformation. 
For any finite, rooted ordered tree $(\bt,\rho)\in\bbT$, one can define a bijection between $E(\bt)$ the set of edges of $\bt$, and $\{1,\ldots,\# E(\bt)\}$ (e.g. the breadth-first exploration of $\bt$); therefore, if $\cS_{\bt,\rho}(\cdot)$ denotes a realization of a $\bbZ^d$-valued random walk indexed by $\bt$, it can be rewritten as a random variable on the measured space $((\bbZ^d)^{\# E(\bt)},\cF_{\cS,\# E(\bt)})$ where $\cF_{\cS,\# E(\bt)}$ denotes the product sigma-algebra on $(\bbZ^d)^{\# E(\bt)}$. With these notation at hand, we finally define
\begin{align*}
&\bbA\,:=\\
&\quad\left\{ \left(\go, (x_i,\bt_i,\cS^{x_i}_{\bt_i,i})_{i\geq0} \right)\,\middle|\, \go\in\gO\,,\, x_0=0\,;\, \forall\,i\geq0,\,x_i\in\bbZ^d,\,\bt_i\in\bbT\text{ and } \cS^{x_i}_{\bt_i,i}\in (\bbZ^d)^{\#E(\bt_i)}\right\}\,,
\end{align*}
and we endow $\bbA$ with its product sigma-algebra $\cA$. 

Let $\go\in\gO$, let $(\cS_\Ti,\Ti)$ be a realization of the critical snake in $\go$ started from $0\in\bbZ^d$. Let $(u_i)_{i\geq0}\ceq\Spine(\Ti)$, and for $i\geq0$ let $\T_i$ be the (largest) sub-tree of $\Ti$ rooted in $u_i$ and not containing $u_{i-1}$ and $u_{i+1}$. Consider the (injective) mapping,
\begin{equation}\label{eq:pushforward}
\Phi\;:\;(\go,\cS_\Ti,\Ti)\,\mapsto\, \left(\go, (\cS_\Ti(u_i),\,\T_i,\,\cS_\Ti|_{\T_i})_{i\geq0} \right)\,\in\bbA\,,
\end{equation}
Therefore, the annealed distribution $\bP$ of the snake $(\go,\cS_\Ti,\Ti)$ induces a probability measure on $(\bbA,\cA)$, which we also denote $\bP$ abusively. Additionally, under the mapping above one has that $(X_i)_{i\geq0}\ceq(\cS_\Ti(u_i))_{i\geq0}$ is a RWRE started from 0 with transition probabilities induced by $\go$, $\T_i\sim \PT$ are i.i.d. and independent from $\go, (X_i)_{i\geq0}$; and for $i\geq0$, $\cS_{i}\ceq \cS_\Ti|_{\T_i}$ are independent walks indexed by $\T_i$ started from $X_i\in\bbZ^d$, with transition probabilities $(\go_{x,y}/\pi_\go(x))_{x,y\in\bbZ^d}$.

Let $\cR:\bbA\to \bbA$ be defined by,
\begin{equation}\label{eq:ergodicity:R}
\cR\left(\go, (x_i,\bt_i,\cS^{x_i}_{\bt_i,i})_{i\geq0} \right)\,:=\, \left(\tau_{x_1}\go\,,\, \big(x_{i+1}-x_1,\bt_{i+1},\cS^{x_{i+1}-x_1}_{\bt_{i+1},i+1}\big)_{i\geq0} \right)\,.
\end{equation}
When applied to (the image by $\Phi$ of) a critical snake realization $(\cS_\Ti,\Ti)$, $\cR$ is the transformation that re-roots it at the next vertex along the spine $u_1$, removes the former root $u_0$ and the finite tree it supported, and shifts the environment and the trajectories in $\bbZ^d$ by $X_1\ceq\cS_\Ti(u_1)$. The application $\Phi$ and the dynamical system $(\bbA,\cR)$ are illustrated in Figure~\ref{fig:dynsystem}.

\begin{figure}[ht]
\begin{center}
\begin{subfigure}[c]{.26\textwidth}
\includegraphics[width=\linewidth]{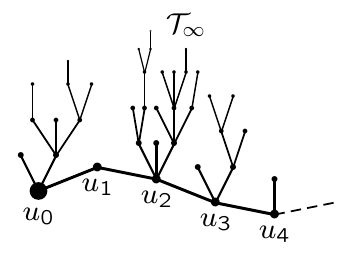}
\caption*{\vspace{-4mm}$(\go,\cS_\Ti,\Ti)$}
\end{subfigure} $\overset{\Phi}{\longrightarrow}$ 
\begin{subfigure}[c]{0.3\textwidth}
\includegraphics[width=\linewidth]{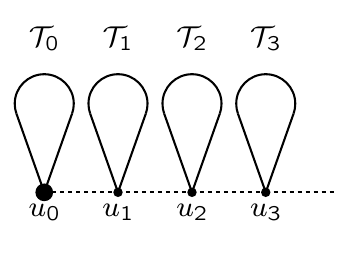}
\caption*{$\left(\go, (x_i,\cT_i,\cS_{\cT_i})_{i\geq0} \right)$}
\end{subfigure} $\overset{\cR}{\longrightarrow}$
\begin{subfigure}[c]{0.3\textwidth}
\includegraphics[width=\linewidth]{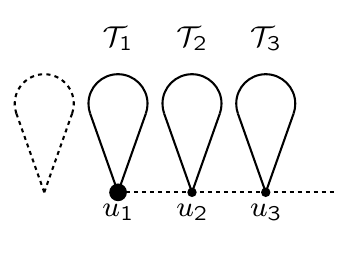}
\caption*{$\left(\tilde\go\,,\, \big(\tilde x_{i+1},\cT_{i+1},\tilde\cS_{\cT_{i+1}}\big)_{i\geq0} \right)$}
\end{subfigure}
\caption{\footnotesize Illustration of the dynamical system. An (annealed) realization of the snake is a triplet $(\go,\cS_\Ti,\Ti)$, where the tree $\Ti$ contains an infinite spine $(u_i)_{i\geq0}$ and is rooted in $u_0$, i.e. $\cS_\Ti(u_0)=0$. The application $\Phi$ maps the realization into $\bbA$, by splitting $\Ti$ into a sequence of finite trees $(\T_i)_{i\geq0}$, and defining $\cS_{\T_i}=\cS_{\Ti}|_{\T_i}$ and $x_i=\cS_{\T_i}(u_i)$. Then, the transformation $\cR:\bbA\to\bbA$ modifies this sequence by removing the first tree $\cT_0$ and shifting the environment $\go$ and the maps $\cS_{\cT_i}$, so that the new root $u_1$ is sent to $0$ in $\bbZ^d$: see~\eqref{eq:ergodicity:R} for the definitions of $\tilde\go$, $\tilde x_{i+1}$ and $\tilde\cS_{\cT_{i+1}}$, $i\geq0$.}
\label{fig:dynsystem}
\end{center}
\end{figure}

Assume that there exists a probability measure $\bbQ\sim\bbP$ which is $R$-invariant and ergodic, as in Proposition~\ref{prop:particleviewpointmeasure}. 
We may define $\tilde\bP$ a new annealed probability distribution of the random snake, where the environment has law $\bbQ$ instead of $\bbP$ and the rest of the definition is unchanged (that is, the quenched distributions $\bP(\cdot|\go)=\bP^\go(\cdot)=\tilde\bP(\cdot|\go)$ are identical $\bbP$-a.s., recall~\eqref{eq:snake:transitions}). Similarly to $\bP$, we may push it forward to $\bbA$ with the mapping~\eqref{eq:pushforward}: then, we prove that the dynamical system $(\bbA,\cA,\tilde \bP,\cR)$ is ergodic.

\begin{lemma}\label{lem:ergodicity}
Let $\bbP$ be such that Assumption~\ref{assum:erg} holds. Suppose that there exists $\bbQ$ a $R$-invariant, ergodic measure, and that $\bbP$ and $\bbQ$ are absolutely continuous with respect to one another. Then:

$(i)$ The probability measures $\bP$ and $\tilde \bP$ on $(\bbA,\cA)$ are absolutely continuous with respect to one another.

$(ii)$ The probability measure $\tilde \bP$ on $(\bbA,\cA)$ is $\cR$-invariant.

$(iii)$ The dynamical system $(\bbA,\cA,\tilde \bP,\cR)$ is ergodic.
\end{lemma}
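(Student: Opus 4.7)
The plan is to prove (i), (ii), (iii) in order, with (iii) being the main task.

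For (i), the key observation is that the quenched law $\bP^{\go}(\cdot)$ of the snake defined in~\eqref{eq:snake:transitions} depends only on $\go$ and is identical under $\bP$ and $\tilde\bP$; only the marginal on the environment differs. Hence, on $(\bbA,\cA)$, the Radon--Nikodym derivative reduces to the one on $(\gO,\cF_\gO)$ from Proposition~\ref{prop:particleviewpointmeasure}:
\[
\frac{d\tilde\bP}{d\bP}(\omega)\,=\,\frac{d\bbQ}{d\bbP}(\go)\,=\,\frac{\pi_\go(0)}{\bbE[\pi_\go(0)]}\,,
\]
which is strictly positive $\bbP$-a.s.\ and has a reciprocal in $L^1(\tilde\bP)$, giving mutual absolute continuity.

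For (ii), I reformulate the encoding in $\bbA$ as follows. Write $x_n\ceq\cS_\Ti(u_n)$ (so $x_0=0$), $v_{n+1}\ceq x_{n+1}-x_n$ for the spine increment, $\go_n\ceq\tau_{x_n}\go$, and $B_n\ceq(\T_n,\cS_\Ti|_{\T_n}(\cdot)-x_n)$ for the $n$-th finite sub-tree together with its walk translated to start at $0$. Then $\Phi$ in~\eqref{eq:pushforward} is conjugate to the bijection $\omega\mapsto(\go_0,(v_{n+1},B_n)_{n\geq0})$, and under this conjugacy $\cR$ acts as the shift $(\go_n,v_{n+1},B_n)_{n\geq0}\mapsto(\go_{n+1},v_{n+2},B_{n+1})_{n\geq0}$. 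Under $\tilde\bP$, the sequence $(\go_n)_{n\geq0}$ is exactly the Markov chain of Proposition~\ref{prop:particleviewpointmeasure} started from $\bbQ$ with kernel $R$; and conditionally on $(\go_n)_{n\geq0}$, the pairs $(v_{n+1},B_n)$ are mutually independent, each depending only on $\go_n$ (with $v_{n+1}\sim p^{\go_n}_{0,\cdot}$, $\T_n\sim\PT$ independent of the environment, and the shifted walk on $\T_n$ distributed as a tree-indexed walk in environment $\go_n$ started at $0$). Stationarity of $(\go_n)_{n\geq0}$ under $\bbQ$ together with this time-homogeneous conditional description makes $(\go_n,v_{n+1},B_n)_{n\geq0}$ stationary, hence $\tilde\bP$ is $\cR$-invariant.

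For (iii), the strategy is to realize the sequence above as a stationary factor of a direct product of the ergodic point-of-view-of-particle chain with a Bernoulli noise. Since all underlying spaces are standard Borel and the conditional law of $(v_{n+1},B_n)$ given the whole sequence $(\go_m)_{m\geq0}$ depends only on $\go_n$, a standard measurable selection argument yields a measurable map $\Psi$ and an i.i.d.\ sequence $(W_n)_{n\geq0}$ of uniform $[0,1]$ variables, independent of $(\go_n)_{n\geq0}$, such that $(v_{n+1},B_n)=\Psi(\go_n,W_n)$ for all $n\geq0$. The sequence $(\go_n,v_{n+1},B_n)_{n\geq0}$ is therefore an equivariant factor of the direct product $(\go_n,W_n)_{n\geq0}$. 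The latter is the direct product of the ergodic system $(\go_n)_{n\geq0}$ (ergodic by Proposition~\ref{prop:particleviewpointmeasure}) with the Bernoulli shift $(W_n)_{n\geq0}$; since Bernoulli shifts are weakly mixing, their direct product with any ergodic system is ergodic. Hence $(\go_n,W_n)_{n\geq0}$ is ergodic, and so is its factor $(\go_n,v_{n+1},B_n)_{n\geq0}$, which via the conjugacy above yields ergodicity of $(\bbA,\cA,\tilde\bP,\cR)$.

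The main obstacle is the reduction in (iii) to the direct product with Bernoulli noise: one must verify the measurable noise representation carefully, making use both of the standard Borel structure of the spaces involved and of the crucial fact---from the explicit sampling procedure of the snake---that the conditional law of $(v_{n+1},B_n)$ given the past depends only on $\go_n$. The rest of the argument---stationarity of $(\go_n)_{n\geq0}$, its ergodicity, and the preservation of ergodicity by direct product with a weakly mixing system and by taking factors---is then standard.
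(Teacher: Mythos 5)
Parts $(i)$ and $(ii)$ are essentially correct and match the paper's proof in substance: for $(i)$ the Radon--Nikodym derivative on $\bbA$ reduces to $\dd\bbQ/\dd\bbP(\go)$, and for $(ii)$ your reformulation of $\cR$ as the shift on the stationary Markov chain $Y_n=(\go_n,v_{n+1},B_n)$ started from its stationary law is a valid repackaging of the paper's direct computation (which conditions on $\go$ and $X_1$ and uses $R$-invariance of $\bbQ$).

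Part $(iii)$, however, has a genuine gap. Your reduction rests on the claim that there is an i.i.d.\ noise sequence $(W_n)$, \emph{independent of the whole environment sequence} $(\go_n)_{n\geq0}$, with $(v_{n+1},B_n)=\Psi(\go_n,W_n)$, so that the system is a factor of the \emph{direct product} of the point-of-view chain with a Bernoulli shift. This representation does not reproduce the correct joint law: the increment $v_{n+1}$ that the noise $W_n$ generates also \emph{drives} the environment chain, since $\go_{n+1}=\tau_{v_{n+1}}\go_n$. For a non-degenerate ergodic environment, $\tau_z\go\neq\tau_{z'}\go$ a.s.\ for $z\neq z'$, so under the true law $v_{n+1}$ is a.s.\ a deterministic function of $(\go_n,\go_{n+1})$; whereas under your product representation the conditional law of $v_{n+1}$ given the entire sequence $(\go_m)_{m\geq0}$ would be the nondegenerate law $p^{\go_n}_{0,\cdot}$. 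Already the joint law of $(\go_0,\go_1,v_1)$ differs. Consequently $W_0$ cannot be independent of $\go_1$, the system is a \emph{skew product} over the Bernoulli base (the fiber map $\go\mapsto\tau_{v}\go$ depends on the noise), not a direct product, and the theorem ``ergodic $\times$ weakly mixing $=$ ergodic'' does not apply. Ergodicity of such skew products given ergodicity of the stationary Markov chain is true but is itself a nontrivial statement (Kakutani's random ergodic theorem for i.i.d.\ transformations), whose standard proof is essentially the argument the paper gives: for an $\cR$-invariant $f$, one shows $\phi(\tau_{X_n}\go)\ceq\tilde\bE[f\,|\,\cF_n]$ is a martingale converging to $f$, then uses Birkhoff's theorem and the ergodicity of $\bbQ$ to force $\phi\in\{0,1\}$. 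You would need either to supply that argument or to invoke the random ergodic theorem explicitly; the direct-product shortcut as written is not available.
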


\begin{remark}
Let us point out that this lemma also holds for non-reversible environments.
\end{remark}

\begin{proof}
These results follow from quite standard arguments regarding dynamical systems and the point of view of the particle, see e.g. the proof of \cite[Theorem~1.2]{BS12book}.

$(i)$. Recall that $\bP$ and $\tilde\bP$ have the same distributions conditionally to $\go$,  $\bbP$-a.s. by definition. Therefore one has,
\[
\frac{\dd\tilde\bP}{\dd\bP}\left(\go, (X_i,T_i,\cS_i)_{i\geq0}\right)\,=\,
\frac{\dd\bbQ}{\dd\bbP}(\go)\,>\,0\,,\qquad\text{for }\bP\text{-a.e. } \left(\go, (X_i,T_i,\cS_i)_{i\geq0}\right)\in \bbA\,.
\]

$(ii)$. Let $f:\bbA\to \R$ be a bounded measurable function. Conditioning with respect to $\go$ and $X_1$, one has,
\begin{align*}
&\tilde\bE\left[(f\circ \cR)\Big(\go, (X_i,T_i,\cS_i)_{i\geq0}\Big)\right] \hspace{-0.45pt}=\hspace{-0.45pt} \bbE_\bbQ\Bigg[\sum_{z\in\bbZ^d} \frac{\go_{0,z}}{\pi_\go(0)} \tilde\bE\Big[(f\circ \cR)\Big(\go, (X_i,T_i,\cS_i)_{i\geq0}\Big) \Big|\go, X_1=z \Big] \Bigg]\\
&\qquad =\, \bbE_\bbQ\left[\sum_{z\in\bbZ^d} \frac{\go_{0,z}}{\pi_\go(0)}\, \tilde\bE\left[f\Big(\tau_{z}\go, (X_{i+1}-z,T_{i+1},\cS_{i+1}-z)_{i\geq0}\Big) \,\middle|\,\go, X_1=z \right] \right]\\
&\qquad =\, \bbE_\bbQ\left[\sum_{z\in\bbZ^d} \frac{\go_{0,z}}{\pi_\go(0)}\, \tilde\bE\left[f\Big(\go', (X_{i},T_{i},\cS_{i})_{i\geq0}\Big) \,\middle|\,\go'=\tau_z\go\right] \right]\,,
\end{align*}
where the last equality is obtained by applying the Markov property. Recalling~\eqref{eq:ergodicity:tildeR} and that $\bbQ$ is $R$-invariant, this yields
\begin{align*}
&\tilde\bE\left[(f\circ \cR)\Big(\go, (X_i,T_i,\cS_i)_{i\geq0}\Big)\right] \\
&\qquad = \bbE_\bbQ\left[\tilde\bE\left[f\Big(\go, (X_{i},T_{i},\cS_{i})_{i\geq0}\Big) \,\middle|\,\go\right] \right]\,=\, \tilde\bE\left[f\Big(\go, (X_i,T_i,\cS_i)_{i\geq0}\Big)\right]\,,
\end{align*}
thus $\tilde \bP$ is $\cR$-invariant.

$(iii)$. Let $\cF_n:=\gs(\go,(X_i,T_i,\cS_i)_{i\leq n})$, $n\geq0$ be a filtration of $\cA$. Let $f:\bbA\to\R$ be bounded measurable such that $f\circ \cR=f$ $\tilde \bP$-a.e., and define
\[
\phi(\go)\,:=\,\tilde\bE\left[f\Big(\go, (X_i,T_i,\cS_i)_{i\geq0}\Big)\,\middle|\,\go\right]\,,\quad \go\in\gO\,.
\]
We claim that $\phi(\tau_{X_n}\go)$, $n\geq0$ is an $(\cF_n)_{n\geq0}$-martingale. Indeed,
\begin{align*}
\tilde\bE\left[f\Big(\go, (X_i,T_i,\cS_i)_{i\geq0}\Big)\,\middle|\,\cF_n\right]\,&=\, \tilde\bE\left[(f\circ \cR^{\circ n})\Big(\go, (X_i,T_i,\cS_i)_{i\geq0}\Big)\,\middle|\,\cF_n\right]\\
&=\, \tilde\bE\left[f\Big(\go', (\ol X_i,\ol T_i,\ol \cS_i)_{i\geq0}\Big)\,\middle|\,\go'=\tau_{X_n}\go\right] \,=\, \phi(\tau_{X_n}\go)\,,
\end{align*}
where the second equality follows from the Markov property, with $(\ol X_i,\ol T_i,\ol \cS_i)_{i\geq0}$ a copy of $(X_i,T_i,\cS_i)_{i\geq0}$ in the same environment. Therefore, $(\phi(\tau_{X_n}\go))_{n\geq0}$ converges $\tilde \bP$-a.e. and in $L^1(\tilde\bP)$ towards $f\Big(\go, (X_i,T_i,\cS_i)_{i\geq0}\Big)$.

In particular for $A\in\cA$ a $\cR$-invariant set, let $f:=\ind_A$ and define $\phi$ as above: then we claim that $\phi(\go)\in\{0,1\}$ for $\bbQ$-a.e $\go\in\gO$. Indeed, otherwise there would exist $[a,b]\subset\R\setminus\{0,1\}$ such that $\bbQ(\phi(\go)\in[a,b])>0$. However, Birkhoff's ergodic theorem would yield that
\[
\frac1n \sum_{k=0}^{n-1} \ind_{\{\phi(\tau_{X_n}\go)\in[a,b]\}}\,\underset{n\to+\infty}{\longrightarrow}\, \tilde \bP(\phi(\go)\in[a,b]\,|\,\cI)\,,\qquad\tilde\bP\text{-a.s. and in } L^1(\tilde\bP)\,,
\]
where $\cI$ is the sigma-field of $\cR$-invariant events in $\cA$. However, taking the expectation above we see that $\tilde\bE[\tilde\bP(\phi(\go)\in[a,b]|\cI)]=\bbQ(\phi(\go)\in[a,b])>0$, which contradicts that $\phi(\tau_{X_n}\go)\to\ind_A$ $\tilde\bP$-a.s.. We conclude that there exists $B\in\cF_\gO$ such that $\phi=\ind_B$ $\bbQ$-a.s.. Since we assumed that $A$ is $\cR$-invariant, then $B$ is $R$-invariant; and since $\bbQ$ is ergodic, this implies $\tilde\bP(A)=\bbQ(B)\in\{0,1\}$, which finishes the proof. 
\end{proof}

With Lemma~\ref{lem:ergodicity} at hand, we may finally prove the 0--1 law for the recurrence of the critical snake in $\bbZ^d$.
\begin{proof}[Proof of Proposition~\ref{prop:01law}]
Let $\go$ be a random environment with law $\bbP$ as in~\eqref{eq:mixedenv}: then Proposition~\ref{prop:particleviewpointmeasure} implies that Lemma~\ref{lem:ergodicity} holds. By Lemma~\ref{lem:ergodicity}.$(i)$, it is sufficient to prove the 0--1 law under the distribution $\tilde \bP$; then it also holds for $\bP$.

Notice that the event $A:=\{\forall\, x\in\bbZ^d, x\text{ is visited infinitely often}\}\in\cA$ is $\cR$-invariant: hence, the ergodicity of $(\bbA,\cA,\tilde\bP,\cR)$ implies that $\tilde\bP(A)\in\{0,1\}$. Let us assume that the event $B:=\{\exists\, x\in\bbZ^d, x\text{ is visited infinitely often}\}$ has positive $\tilde\bP$-probability, and show that it implies $\tilde\bP(A)>0$ (and thus $\tilde\bP(A)=1$). Writing a direct union bound, one notices that $\tilde\bP(B)>0$ implies that there exists $x_0\in\bbZ^d$ which is visited infinitely often with positive probability, i.e. $\tilde\bP(\#\cS_\Ti^{-1}(x_0)=+\infty)>0$. Let us prove that
\begin{equation}\label{eq:01law:mainineq}
\forall\,z\in\bbZ^d\,,\quad \tilde\bP(\#\cS_\Ti^{-1}(z)<+\infty\,,\,\#\cS_\Ti^{-1}(x_0)=+\infty)=0\,;
\end{equation}
then,~\eqref{eq:01law:mainineq} and another union bound imply that $\tilde\bP(A)= \tilde\bP(\#\cS_\Ti^{-1}(x_0)=+\infty)>0$, which concludes the proof.

Let us prove~\eqref{eq:01law:mainineq} under the quenched distribution $\bP^\go(\cdot)=\tilde\bP(\cdot|\go)$ for $\bbQ$-a.e. $\go$, then the result follows naturally for the annealed law $\tilde\bP=\bbE_\bbQ\bP^\go$. Let $\go\in\gO$ and $(u_i)_{i\geq0}\ceq\Spine(\Ti)$, and recall from~\eqref{eq:pushforward} that $\T_i$ denotes the finite tree supported by $u_i$. Assume that $x_0$ is visited infinitely many times, that is the set $\{(i,v)\,; i\geq 0, v\in \T_i, \cS_\Ti(v)=x_0\}$ is infinite; then notice that it admits an infinite subset $\{(i_k,v_k), k\geq0\}$ such that $i_k\neq i_\ell$ for all $k\neq \ell$. Then, each vertex $v_k\in \T_{i_k}$ is the root of a critical BGW sub-tree $\tilde \T_{v_k}\subset \T_{i_k}$ with offspring distribution $\bbq$ (except when $v_k=u_{i_k}\in\Spine(\cTi)$, in which case $v_k$ has offspring distribution $((k+1)q_{k+1})_{k\geq0}$). Moreover, by the Markov property\footnote{To be more precise, this follows from a bit of stopping line theory on Markov branching processes, see e.g. \cite{Big04, Chau91, Jag89}, we do not write the details here.}, the trees $(\tilde \T_{v_k})_k$ and the walks induced by $\cS_\Ti$ on them are independent, and have positive probability of visiting $z\in\bbZ^d$: therefore, conditionally to $\#\cS_\Ti^{-1}(x_0)=+\infty$, $z$ is visited infinitely many times with $\bP^\go$-probability 1. This finishes the proof.
\end{proof}

\section{Green's function estimates}\label{sec:estimates}
This section gathers, mostly from~\cite{AH21}, some quenched estimates on the \emph{heat kernel} and \emph{Green's function} of the RWRE in conductances or traps, which are the cornerstone of our proofs in Section~\ref{sec:transrec} below. 

For a fixed realization of $\go$, we consider the \emph{heat kernel} of a RWRE $(X_n)_{n\geq0}$ in $\bbZ^d$, that is for $x,y\in\bbZ^d$, $n\geq0$,
\begin{equation}\label{eq:def:heatkernel}
P^\go_{n}(x,y)\,\ceq\, \frac{\bP^\go_x(X_n=y)}{\pi_\go(y)}\,,
\end{equation}
and its associated \emph{Green's function},
\begin{equation}\label{eq:def:green}
g^\go(x,y)\,\ceq\, \sum_{n\geq 0} P^\go_{n}(x,y)\;\in[0,+\infty]\,.
\end{equation}
Since the random walk in random conductances or random traps is reversible, one has $\bbP$-a.s. for $x,y\in\bbZ^d$,
\begin{equation}\label{eq:green:rev}
g^\go(x,y)\,=\, g^\go(y,x)\,.
\end{equation}

Estimates on the heat kernel and Green's function are very standard in the case of the \emph{homogeneous} random walk in $\bbZ^d$, which is equivalent to letting $\go_{x,y}\ceq\ind_{x\sim y}$ $\bbP$-a.s. for all $x,y\in\bbZ^d$ in our notation. Let us denote with $P_{(\cdot)}(\cdot,\cdot)$, $g(\cdot,\cdot)$ its heat kernel and Green's function respectively. It is well-known that the homogeneous heat kernel satisfies, for some $c_1,c_2,c_3,c_4>0$,
\begin{equation}\label{eq:UE:heatkernel}
c_1 n^{-d/2}\exp\big(-c_2|x-y|^2/n \big)\;\leq\; P_{n}(x,y)\;\leq\; c_3 n^{-d/2}\exp\big(-c_4|x-y|^2/n \big)\,,
\end{equation}
for all $n\in\N$, $x,y\in\bbZ^d$ such that $|x-y|\leq n$ and $(x-y)$, $n$ have the same parity (otherwise $P_{n}(x,y)=0$). In particular when $d\geq3$, a direct computation shows that this implies for some $c_5,c_6>0$ and all $x,y\in\bbZ^d$ that,
\begin{equation}\label{eq:UE:green}
c_5\,(1+|x-y|^{2-d})\,\leq\, g(x,y)\,\leq\, c_6\,(1+|x-y|^{2-d})\,,
\end{equation}
and $g(x,y)=+\infty$ if $d\leq 2$.

\begin{remark}\label{rem:UEestimates}
It follows from~\cite{Del99} that those estimates also hold uniformly $\bbP$-a.s. for the heat kernel and Green's function of a RWRE in a \emph{uniformly elliptic} environment; that is, if there exists $\eps>0$ such that $\bbP(\go_{x,y}/\pi_\go(x)\geq \eps)=1$ for all $x\sim y$.
\end{remark}

\subsection{RWRE with random traps} 
We have the following.

\begin{proposition}\label{prop:green:traps}
Let $\go$ be a random traps environment. Then, one has $\bbP$-a.s. for all $x,y\in\bbZ^d$,
\begin{equation}
    g^\go(x,y)\,=\, g(x,y)\,.
\end{equation}
\end{proposition}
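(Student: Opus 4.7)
The plan is to exploit the jump-chain decomposition of the trap walk: the only difference between $(X_n)$ and simple random walk is the presence of self-loops at rate $\rho_x$, so $(X_n)$ is a time-changed SRW, and the time change is precisely absorbed by the normalization $\pi_\omega(y)^{-1}$ appearing in the definition of the symmetric heat kernel.

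First I would observe that from any vertex $x$, the conditional distribution of $X_1$ given $X_1 \neq x$ is uniform over the neighbors of $x$, since
\[
\frac{p^\omega_{x,y}}{1-p^\omega_{x,x}} \;=\; \frac{(1-\rho_x)/(2d)}{1-\rho_x} \;=\; \frac{1}{2d}, \qquad y\sim x.
\]
Hence the jump chain $(Y_k)_{k\geq 0}$ obtained by collapsing consecutive self-loops of $X$ is a simple random walk on $\mathbb{Z}^d$ started at $X_0$, while the holding time $\xi_k$ at $Y_k$ is, conditionally on the jump chain, geometric on $\{1,2,\ldots\}$ with parameter $1-\rho_{Y_k}$, and the $\xi_k$'s are conditionally independent. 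In particular $\mathbf{E}^\omega[\xi_k \mid Y_k = y] = (1-\rho_y)^{-1} = \pi_\omega(y)$.

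Partitioning the time steps according to jumps then gives the pointwise identity $\sum_{n\geq 0}\mathbf{1}_{X_n=y} = \sum_{k\geq 0} \xi_k \mathbf{1}_{Y_k=y}$, from which Tonelli and conditioning on $(Y_k)$ yield
\[
\mathbf{E}^\omega_x\Bigl[\sum_{n\geq 0}\mathbf{1}_{X_n=y}\Bigr] \;=\; \sum_{k\geq 0}\mathbf{E}^\omega_x\bigl[\xi_k\mathbf{1}_{Y_k=y}\bigr] \;=\; \pi_\omega(y)\sum_{k\geq 0}\mathbf{P}^\omega_x(Y_k=y).
\]
Dividing both sides by $\pi_\omega(y)$, the left-hand side equals $g^\omega(x,y)$ by definition \eqref{eq:def:heatkernel}--\eqref{eq:def:green}, while the right-hand side equals the SRW Green's function $g(x,y)$ (in the paper's normalization), proving the claim.

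I do not anticipate any genuine obstacle: the proposition essentially reduces to the one-line remark that the time-change induced by the traps cancels the holding-time factor hidden in $\pi_\omega(y)$. An equivalent purely algebraic proof writes the transition matrix as $P^\omega = R + (I-R)\bar P$, with $R = \mathrm{diag}(\rho_x)$ and $\bar P$ the SRW kernel, so that $I - P^\omega = (I-R)(I-\bar P)$; inverting and applying to $\delta_y$ yields $(I-P^\omega)^{-1}\delta_y = \pi_\omega(y)(I-\bar P)^{-1}\delta_y$, and dividing by $\pi_\omega(y)$ again gives the identity (with both sides infinite in dimensions $d\leq 2$, the identity remaining trivially valid).
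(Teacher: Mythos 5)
Your argument is correct and is essentially the paper's own proof: the paper likewise introduces the jump times $\tau_{n+1}=\inf\{k>\tau_n: X_k\neq X_{\tau_n}\}$, identifies $(X_{\tau_n})$ as a simple random walk with geometric holding times of mean $(1-\rho_y)^{-1}=\pi_\go(y)$, and cancels this factor against the $\pi_\go(y)^{-1}$ in the definition of $g^\go$. The algebraic factorization $I-P^\go=(I-R)(I-\bar P)$ you add at the end is a nice compact restatement but not a genuinely different route.
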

\begin{proof}
Recall from~\eqref{eq:def:green} that, for $\bbP$-a.e. $\go$, $\pi_\go(y)g^\go(x,y)$ is the expected (quenched) local time in $y$ of a RWRE $(X_n)_{n\geq0}$, with $\pi_\go(y)=(1-\rho_y)^{-1}$ by definition~\eqref{eq:defpi}. Define recursively $\tau_0=0$ and for $n\geq0$,
\[
\tau_{n+1}\,:=\, \inf\{k>\tau_n, X_k\neq X_{\tau_n}\}\,.
\]
Then, $(Y_n)_{n\geq0}\ceq(X_{\tau_n})_{n\geq0}$ is exactly a homogeneous, nearest-neighbour random walk in $\bbZ^d$. Moreover, it is clear that for $n\geq0$, conditionally to $X_{\tau_n}=y\in\bbZ^d$, $\tau_{n+1}-\tau_n$ is a geometric random variable with success probability $(1-\rho_{y})$. Therefore, one has for $\bbP$-a.e. $\go$,
\begin{align*}
g^\go(x,y)\,=\, \frac1{\pi_\go(y)}\bE_x^\go\Bigg[\sum_{k\geq0} \ind_{\{X_k=y\}}\Bigg]\,=\, \frac1{\pi_\go(y)}\bE_x^\go\Bigg[\sum_{n\geq0} (\tau_{n+1}-\tau_n)\ind_{\{Y_n=y\}}\Bigg]\,=\,  \sum_{n\geq0} P_n(x,y)\,,
\end{align*}
which concludes the proof.
\end{proof}

\subsection{RWRE with random conductances}
In~\cite{AH21}, the authors provide estimates on the heat kernel and Green's function (for $d\geq3$) for the RWRE in random conductances. 
We produce some of their results here, where we mostly kept the same notation for the sake of clarity. Let us mention that all these were formulated for the continuous time process called the \emph{constant speed random walk} (CSRW), but they can straightforwardly be extended to the discrete time setting, see e.g.~\cite[Sect.~1.2]{Bis11Rev}.

\begin{remark}\label{rem:generalassumption}
In order to lighten the presentation, the authors of the present paper decided to formulate their results, notably Theorem~\ref{thm:main}.$(ii)$, in the framework of Assumption~\ref{assum:core} (finite range dependence). However, in~\cite{AH21} the authors consider a much more general setting given by~\cite[Assumption~1.3]{AH21}, which covers notably environments with finite range dependence, with non-positive correlations or with a polynomial mixing property. We claim that the following two theorems, as well as all statements from the present paper (including Theorem~\ref{thm:main}.$(ii)$), also hold when Assumption~\ref{assum:core} is replaced by the much more general~\cite[Assumption~1.3]{AH21}, with no change to the proofs or formulae. We refer to~\cite{AH21} for some motivations to their more general assumption, and several examples of classical models which are covered by them. 
\end{remark}

In the following, we always assume $\go$ is made of random conductances. Recall that we write abusively $\go\in L^p(\bbP)$, $\go^{-1}\in L^q(\bbP)$ if one has $\go_{x,y}\in L^p(\bbP)$ and $(\go_{x,y})^{-1}\in L^q(\bbP)$ for all $x\sim y$.

\begin{theorem}[Upper bounds from~\cite{AH21}]
\label{thm:AH21:UB}
Suppose that Assumption~\ref{assum:erg} holds. Let $p,q\in(1,+\infty]$ be such that $1/p+1/q< 2/d$, and assume $\go\in L^p(\bbP)$, $\go^{-1}\in L^q(\bbP)$. 
 
$(i)$ There exists $c_1,c_2>0$ and a random variable $N_1(x)=N_1(x,\go,p,q)<+\infty$, $x\in\bbZ^d$, such that for $\bbP$-a.e. $\go$ and all $x\in\bbZ^d$, one has
\begin{equation}\label{eq:thm:AH21:UB:Lp}
\sup_{m\geq N_1(x)} m^{-d}\sum_{z:|z-x|\leq m}\pi_\go(z)^p \;\leq\; c_1 \bbE[\pi_\go(0)^p]\,,
\end{equation}
and, for all $n\geq N_1(0)^2$ and $\bbP$-a.e. $\go$, one has
\begin{equation}\label{eq:thm:AH21:UB:P}
P^\go_{n}(0,0)\,\leq\, c_2\,n^{-d/2}\,.
\end{equation}

$(ii)$ If $d\geq 3$, there exists $c_3,c_3'>0$ such that, for $|x-y|\geq N_2(x)\ceq c_3' N_1(x)^2$ and $\bbP$-a.e. $\go$, one has
\begin{equation}\label{eq:thm:AH21:UB:g}
g^\go(x,y)\,\leq\, c_3 |x-y|^{2-d}\,.
\end{equation}

$(iii)$ Additionally, suppose that Assumption~\ref{assum:core} holds, and let $\gz>0$. 
There exists $p_0=p_0(\gz)\in(0,+\infty)$ such that, as soon as $\go, \go^{-1}\in L^{p_0}(\bbP)$, then $N_1(x)$ satisfies for some $c_4>0$,
\begin{equation}\label{eq:thm:AH21:N1tail}
\bbP(N_1(x) > n)\,\leq \, c_4 n^{1-d\gz}\,,\quad\forall\,n\in\N\,.
\end{equation}
Furthermore, for $d\geq3$ and all $x,y\in\bbZ^d$, one has
\begin{equation}\label{eq:thm:AH21:greenLp}
g^\go(x,y)\in L^\gb(\bbP)\,,\qquad\forall\,0\leq\gb<(d\gz-1)(p_0-1)/(2p_0)\,.
\end{equation}
\end{theorem}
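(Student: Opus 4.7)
The plan is to derive each part from its counterpart in~\cite{AH21} for the constant-speed random walk (CSRW), and then transfer to discrete time via the standard comparison outlined in~\cite[Sect.~1.2]{Bis11Rev}. Since~\cite{AH21} already contains all the hard analytic work (Nash/Moser iteration, Gaussian off-diagonal bounds, quantitative ergodic theorems), my role is essentially to isolate the right statements and translate notation.

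For part $(i)$, I would first establish~\eqref{eq:thm:AH21:UB:Lp} as a direct consequence of Birkhoff's pointwise ergodic theorem applied to the stationary ergodic field $(\pi_\go(z)^p)_{z \in \bbZ^d}$: the hypothesis $\go \in L^p(\bbP)$ forces $\pi_\go(0)^p \in L^1(\bbP)$ since $\pi_\go(0)$ is a finite sum of neighbouring conductances, so the spatial averages converge $\bbP$-a.s.\ to a constant multiple of $\bbE[\pi_\go(0)^p]$. One then simply defines $N_1(x)$ to be the smallest $m$ past which this ergodic average stays below $c_1\bbE[\pi_\go(0)^p]$, simultaneously absorbing the analogous sum for $\go^{-q}$ required in the sequel. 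The on-diagonal heat kernel bound~\eqref{eq:thm:AH21:UB:P} is then the discrete-time transcription of the Nash estimate in~\cite{AH21}; its proof goes through a Moser/Nash iteration whose Sobolev/isoperimetric input is controlled precisely by the sums of $\pi_\go^p$ and $\pi_\go^{-q}$ in balls of radius of order $\sqrt n$.

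For part $(ii)$, I would deduce the Green's function bound by integrating the on-diagonal estimate. A Carne--Varopoulos-type bound gives $P^\go_n(x,y) \leq c n^{-d/2} \exp(-c'|x-y|^2/n)$ for $n \geq N_2(x)$; splitting the sum $\sum_{n \geq 0} P^\go_n(x,y)$ at $n = |x-y|^2$, the small-$n$ contribution is exponentially small in $|x-y|$ and the large-$n$ tail $\sum_{n \geq |x-y|^2} n^{-d/2}$ is of order $|x-y|^{2-d}$ for $d \geq 3$. For part $(iii)$, the tail bound~\eqref{eq:thm:AH21:N1tail} requires a quantitative ergodic theorem: under the finite-range dependence of Assumption~\ref{assum:core}, $\pi_\go(z)^p$ forms an $R$-dependent field, and Rosenthal-type inequalities then yield polynomial concentration of arbitrary order provided enough moments are available, which is why $p_0$ must be chosen large enough in terms of $\gz$. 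The integrability~\eqref{eq:thm:AH21:greenLp} follows by decomposing $\bbE[g^\go(x,y)^\gb]$ according to the event $\{N_2(x) \leq |x-y|\}$: on the good event the deterministic estimate~\eqref{eq:thm:AH21:UB:g} applies, while on the complement one combines~\eqref{eq:thm:AH21:N1tail} with a crude a priori bound on $g^\go(x,y)$ coming from the electrical interpretation (bounding effective resistance via a single path, which lies in $L^{p_0}(\bbP)$ by the $\go^{-1}$ hypothesis).

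The main obstacle here is not conceptual but bookkeeping: one must track how the three parameters $p_0$, $\gz$ and $\gb$ interact in~\eqref{eq:thm:AH21:greenLp}, and check that the constants arising from the Moser iteration and from the quantitative ergodic theorem in~\cite{AH21} combine to give the advertised exponent $(d\gz-1)(p_0-1)/(2p_0)$. The rest—in particular the CSRW-to-discrete-time passage and the classical off-diagonal decay—is standard once the corresponding CSRW statements are in hand.
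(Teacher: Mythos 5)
Your proposal follows essentially the same route as the paper: every item of the theorem is imported from~\cite{AH21} (the ergodic theorem for~\eqref{eq:thm:AH21:UB:Lp}, \cite[Theorem~1.2]{AH21} for~\eqref{eq:thm:AH21:UB:P}, \cite[Theorem~1.6]{AH21} for~\eqref{eq:thm:AH21:UB:g}, \cite[Proposition~2.2, Lemma~2.6]{AH21} for~\eqref{eq:thm:AH21:N1tail}, and \cite[(4.3)]{AH21} for~\eqref{eq:thm:AH21:greenLp}), together with the standard CSRW-to-discrete-time transfer. The paper's proof is nothing more than this list of citations, so your additional sketches of how~\cite{AH21} obtains these estimates go beyond what is actually required; in particular, for part~$(ii)$ you need not re-derive the Green's function bound by integrating off-diagonal heat kernel estimates, since it is stated verbatim in~\cite[Theorem~1.6]{AH21} (and note that Carne--Varopoulos alone does not produce the $n^{-d/2}$ prefactor --- that requires combining it with the on-diagonal bound, which in your splitting would also force $n\geq N_1(y)^2$, introducing a dependence on $y$ absent from the statement).

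One concrete slip in the part you chose to elaborate: your a priori bound on $g^\go(x,y)$ for~\eqref{eq:thm:AH21:greenLp} via "the resistance of a single path" does not work. The quantity $g^\go(y,y)$ equals the effective resistance from $y$ to infinity, and the series resistance of a single infinite path is $\sum_{e}\go_e^{-1}$ along that path, which diverges almost surely. You need Thomson's principle with a genuine unit flow spread over $\partial\gL_r$, e.g.\ $\theta(e)\asymp|e-y|^{-(d-1)}$, giving $R_{\mathrm{eff}}(y,\infty)\leq c\sum_{e}|e-y|^{-2(d-1)}\go_e^{-1}$, whose $L^{p_0}(\bbP)$ norm is finite for $d\geq3$ by the triangle inequality and the hypothesis $\go^{-1}\in L^{p_0}(\bbP)$. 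With that correction your good-event/bad-event decomposition for the $L^\gb$ integrability is sound; but again, the paper sidesteps all of this by citing \cite[(4.3)]{AH21} directly.
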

\begin{proof}
The upper bound~\eqref{eq:thm:AH21:UB:Lp} is due to Assumption~\ref{assum:erg} and the spatial ergodic theorem (see also~\cite[(1.1)]{AH21}), and~\eqref{eq:thm:AH21:UB:P} is given by~\cite[Theorem~1.2]{AH21}. The claim~\eqref{eq:thm:AH21:UB:g} is directly stated in~\cite[Theorem~1.6]{AH21}. Regarding the tail estimate on $N_1$, it is a consequence of~\cite[Proposition~2.2]{AH21} and~\cite[Lemma~2.6]{AH21}. Finally, the integrability of $g^\go(x,y)^\gb$ is stated in~\cite[(4.3)]{AH21}.
\end{proof}

\begin{theorem}[Lower bounds from~\cite{AH21}]
\label{thm:AH21:LB}
Let $d\geq 3$, $\gz>0$, and suppose that Assumptions~\ref{assum:erg} and~\ref{assum:core} holds. 
There exists $p_0\in(0,+\infty)$ and a random variable $N(x)=N(x,\go,p_0)\geq N_1(x)$ such that, as soon as $\go,\go^{-1}\in L^{p_0}(\bbP)$, the following holds:

$(i)$  There exists $c_5>0$ such that, for $|x-y|\geq N(x)$ and $\bbP$-a.e. $\go$,
\begin{equation}\label{eq:thm:AH21:LB:g}
g^\go(x,y)\,\geq\, c_5 |x-y|^{2-d}\,.
\end{equation}

$(ii)$ $N(x)$ satisfies for some $c_{6}>0$,
\begin{equation}\label{eq:thm:AH21:Ntail}
\bbP(N(x) > n)\,\leq \, c_{6} n^{-d(\gz-1)+2}\,,\quad\forall\,n\in\N\,.
\end{equation}
\end{theorem}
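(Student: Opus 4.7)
The plan is to import both statements from the corresponding results in~\cite{AH21}, mirroring what was done for Theorem~\ref{thm:AH21:UB}; no genuinely new argument is needed. The bound~\eqref{eq:thm:AH21:LB:g} will be obtained from a quenched near-diagonal Gaussian lower bound on the heat kernel, while~\eqref{eq:thm:AH21:Ntail} will follow from moment estimates on the radius of regularity of the environment.

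For~(i), the starting point is the Gaussian lower bound for the discrete-time RWRE proved in~\cite[Theorem~1.6]{AH21}: under Assumptions~\ref{assum:erg}--\ref{assum:core} and $\go,\go^{-1}\in L^{p_0}(\bbP)$ with $p_0$ large enough, there is a random scale $N(x)$ such that, for $\bbP$-a.e.\ $\go$ and all $n\geq N(x)^2$ with $|x-y|^2\leq n$,
\[
P^\go_n(x,y)\,\geq\, c\,n^{-d/2}\exp(-c'|x-y|^2/n)\,.
\]
Summing this inequality over $n\in\llb |x-y|^2,2|x-y|^2\rrb$ of the correct parity (and discarding the other terms of~\eqref{eq:def:green} as nonnegative) produces $g^\go(x,y)\geq c_5|x-y|^{2-d}$ whenever $|x-y|\geq N(x)$. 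By taking the maximum with the random scale from Theorem~\ref{thm:AH21:UB}, one can arrange $N(x)\geq N_1(x)$ without affecting any tail exponent, so that the upper and lower bounds hold on the same good event.

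For~(ii), the scale $N(x)$ is defined so that on every ball $B(x,m)$ with $m\geq N(x)$, the empirical averages of $\pi_\go^{p_0}$ and $\pi_\go^{-p_0}$ (together with a few related functionals used in~\cite[Sect.~2]{AH21}) are within a constant of their ergodic expectations; this is precisely what is needed to close the parabolic Harnack / Nash inequality argument giving the Gaussian lower bound. The tail~\eqref{eq:thm:AH21:Ntail} then follows by applying Markov's inequality to the deviations of these empirical averages on balls of radius $\sqrt{n}$, exactly as in~\cite[Proposition~2.2 and Lemma~2.6]{AH21}; the exponent $-d(\gz-1)+2$ records the trade-off between the volume $\sim m^d=n^{d/2}$ of these balls, the moment $p_0=p_0(\gz)$ available under the hypothesis, and the quadratic rescaling $N\sim m^2$. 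The main obstacle is really bookkeeping: one has to track how the parameters $\gz$ and $p_0$ propagate through~\cite{AH21} and verify that the $p_0(\gz)$ chosen here is compatible with both the volume-regularity step of Theorem~\ref{thm:AH21:UB} and the Harnack step underlying~\cite[Theorem~1.6]{AH21}.
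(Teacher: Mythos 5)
Your proposal is correct and takes essentially the same route as the paper: both statements are simply imported from~\cite{AH21}, the paper citing the Green's function lower bound directly from~\cite[Theorem~1.6]{AH21} and the tail estimate from~\cite[Proposition~2.2]{AH21} together with the proof of~\cite[Theorem~1.4]{AH21}. Your intermediate passage through the near-diagonal heat-kernel lower bound and the summation over $n\asymp|x-y|^2$ is a standard equivalent reading of the same source and does not change the argument.
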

\begin{proof}
The claim~\eqref{eq:thm:AH21:LB:g} is the content of~\cite[Theorem~1.6]{AH21}. Regarding the tail estimate on $N$, it is obtained from~\cite[Proposition~2.2]{AH21} combined with~\cite[(3.3)]{AH21} in the proof of~\cite[Theorem~1.4]{AH21}.
\end{proof}

\subsection{RWRE constrained in a large box}
For $m\geq0$, define $\gL_m\ceq\bbZ^d\cap[-m,m]^d$ and $\partial\gL_m\ceq \gL_{m+1}\setminus\gL_m$. 
Recall~\eqref{eq:def:heatkernel} and~\eqref{eq:def:green}. For $\go$ fixed, $x,y\in\bbZ^d$ and $n,m\geq0$, we define the heat kernel of the walk \emph{constrained} in the box of size $m$ by,
\begin{equation}\label{eq:def:heatkernel:ball}
P^{m}_{n}(x,y)\,\ceq\, \frac{\bP^\go_x(X_n=y\,;\,\forall \,s\leq n, X_s\in \gL_m)}{\pi_\go(y)}\,,
\end{equation}
(we omit the superscript $\go$ to lighten notation) and $g^{m}(x,y)\ceq \sum_{n\geq 0} P^{m}_{n}(x,y)$ the associated Green's function. Notice that $g^{m}$ is symmetric as in~\eqref{eq:green:rev}, and that,
\[P^{m}_{n}(x,y)\leq P^\go_n(x,y)\,,\quad g^{m}(x,y)\leq g^\go(x,y)\,,
\]
for $n,m\geq0$, $x,y\in\bbZ^d$ and for $\bbP$-a.e. $\go$. We provide a lower bound on $g^{m}(x,y)$ when $m\gg |x-y|$ in the following statement, both for random conductances and random traps.

\begin{lemma}\label{lem:constrainedgreen}
$(i)$ Let $\go$ be a random conductances environment. Suppose Assumptions~\ref{assum:erg} and~\ref{assum:core} hold. For $n,m\in\N$, consider the event
\begin{equation}\label{eq:lem:constrainedgreen:gORn}
\gO^{m}_{n}\,\ceq\,\left\{\go\,;\, \forall z\in\gL_m\,,\, N_2(z)\leq n \right\}\,.
\end{equation}
Then $\bbP(\gO^{Kn}_{n})\to1$ as $n\to+\infty$ for all $K>0$. Moreover, under the assumptions of Theorem~\ref{thm:AH21:LB}, there exists $c,K>0$ such that, for all $n\in\N$, $\go\in\gO^{Kn+1}_{n}$ and $x,y\in\gL_n$ satisfying $|x-y|\geq N(x) \vee (n/2)$, one has
\begin{equation}\label{eq:lem:constrainedgreen:LB}
g^{Kn}(x,y)\,\geq\, c\,|x-y|^{2-d}\,.
\end{equation}
$(ii)$ Let $\go$ be a random traps environment. Then, there exists $c,K>0$ such that~\eqref{eq:lem:constrainedgreen:LB} holds $\bbP$-a.s. uniformly in $n\in\N$ and $x,y\in\gL_n$ with $|x-y|\geq n/2$.
\end{lemma}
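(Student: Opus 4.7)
The plan is to relate the constrained Green's function $g^{m}$ to the full Green's function $g^\go$ through a Markov decomposition at the exit time of the box, and then to choose $m=Kn$ with $K$ large enough that the correction term becomes negligible compared to the main term. Writing $\tau_m\ceq\inf\{k\geq 0: X_k\notin\gL_m\}$, splitting the occupation time $\bE^\go_x[\sum_{k\geq 0}\ind_{X_k=y}]=g^\go(x,y)\pi_\go(y)$ at $\tau_m$ and applying the strong Markov property yields, for $x,y\in\gL_m$,
\[
g^\go(x,y)\,=\,g^{m}(x,y)\,+\,\sum_{z\in\partial\gL_m}\bP^\go_x(X_{\tau_m}=z)\,g^\go(z,y)\,,
\]
so that $g^{m}(x,y)\geq g^\go(x,y)-\max_{z\in\partial\gL_m}g^\go(z,y)$.

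For the first claim in (i), namely $\bbP(\gO^{Kn}_n)\to 1$, I will invoke the tail bound~\eqref{eq:thm:AH21:N1tail} from Theorem~\ref{thm:AH21:UB}.$(iii)$: by stationarity (Assumption~\ref{assum:erg}), $\bbP(N_2(z)>n)=\bbP(N_2(0)>n)\lesssim n^{(1-d\gz)/2}$, provided $p_0$ is taken large enough in Theorem~\ref{thm:AH21:UB}.$(iii)$ to allow $\gz>(2d+1)/d$. A union bound over the $O(n^d)$ vertices of $\gL_{Kn}$ then produces the required convergence. This is the step where the integrability hypothesis is used most heavily.

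For the quantitative lower bound in (i), I fix $\go\in\gO^{Kn+1}_n$ and $x,y\in\gL_n$ with $|x-y|\geq N(x)\vee(n/2)$. Every $z\in\partial\gL_{Kn}\subset\gL_{Kn+1}$ satisfies $N_2(z)\leq n$ by definition of $\gO^{Kn+1}_n$, and $|z-y|\geq (K-1)n\geq N_2(z)$ as soon as $K\geq 2$. Hence~\eqref{eq:thm:AH21:UB:g} gives $g^\go(z,y)\leq c_3((K-1)n)^{2-d}$, while~\eqref{eq:thm:AH21:LB:g} yields $g^\go(x,y)\geq c_5|x-y|^{2-d}$. Using also $|x-y|\leq 2\sqrt d\,n$ to compare scales, the decomposition above together with a choice of $K$ so large that $c_3(K-1)^{2-d}\leq \tfrac12 c_5(2\sqrt d)^{2-d}$ (possible since $2-d<0$ for $d\geq 3$) gives $g^{Kn}(x,y)\geq\tfrac{c_5}{2}|x-y|^{2-d}$.

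For (ii), Proposition~\ref{prop:green:traps} identifies $g^\go$ with the homogeneous Green's function $g$, which satisfies the deterministic two-sided bounds~\eqref{eq:UE:green} uniformly in $\go$. Replaying the same decomposition and the same choice of $K$ with $g$ in place of $g^\go$ gives the bound uniformly over $n$, $\go$, and $x,y\in\gL_n$ with $|x-y|\geq n/2$, with no exceptional event required. I expect the main obstacle to be calibrating $\gz$ (and the corresponding $p_0$) in the first step of (i) so that the tail of $N_2$ beats the volume growth $n^d$ of the box; once this is done, the remainder of the argument is essentially a deterministic manipulation combining Theorems~\ref{thm:AH21:UB} and~\ref{thm:AH21:LB} through the single identity displayed above.
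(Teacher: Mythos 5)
Your proposal is correct and follows essentially the same route as the paper: the same decomposition of $g^\go(x,y)$ at the first exit time of $\gL_{Kn}$, the same union bound combined with the tail estimate~\eqref{eq:thm:AH21:N1tail} for $\bbP(\gO^{Kn}_n)\to1$, the same application of~\eqref{eq:thm:AH21:UB:g} on $\partial\gL_{Kn}$ and~\eqref{eq:thm:AH21:LB:g} at $(x,y)$ with $K$ large, and the same reduction of part $(ii)$ to Proposition~\ref{prop:green:traps} and~\eqref{eq:UE:green}. Your version merely makes explicit the calibration $\gz>(2d+1)/d$ and the scale comparison $|x-y|\le 2\sqrt d\,n$, which the paper leaves implicit.
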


\begin{proof}
We first prove $(i)$. The first statement is straightforward: for $K>0$, one deduces from a union bound that,
\[
\bbP((\gO^{Kn}_{n})^c)\,\leq (2Kn)^d \, \bbP(N_2(0)>n)\,,
\]
for some $c>0$. Assuming $\gz$ is taken sufficiently large in~\eqref{eq:thm:AH21:N1tail}, this goes to 0 as $n\to+\infty$. Then, let $\tau_m\ceq \inf\{s\geq1, X_s\in \partial\gL_m\}$ for $m\geq0$. One has,
\begin{align*} g^\go(x,y) \,=\, g^{Kn}(x,y) + \frac1{\pi_\go(y)}\bE^\go_x\Bigg[ \sum_{s\geq \tau_{Kn}} \ind_{\{X_s=y\}}\Bigg] 
\,\leq\, g^{Kn}(x,y) + \sup_{z\in\partial\gL_{Kn}} g^\go(z,y)\,,
\end{align*}
where we used the Markov property. Taking $x,y\in\gL_n$ with $|x-y|\geq N(x)\vee (n/2)$ and $K$ sufficiently large, one deduces from~\eqref{eq:thm:AH21:UB:g} and~\eqref{eq:thm:AH21:LB:g} the desired result for all $\go\in \gO^{Kn+1}_{n}$, finishing the proof of~\eqref{eq:lem:constrainedgreen:LB}.

Regarding $(ii)$, it is deduced immediately from Proposition~\ref{prop:green:traps},~\eqref{eq:UE:green} and a similar argument (we leave the details to the reader).
\end{proof}

\section{Recurrence and transience of the critical snake}\label{sec:transrec}
We now present the proofs of recurrence or transience of the critical snake in random environment, using the Green's function estimates which we presented in Section~\ref{sec:estimates}. In order to lighten upcoming formulae, we write with an abuse of notation $|0|^{2-d}\ceq1$ for $0\in\bbZ^d$ in all the series computations below.

\subsection{Transience} Let $d\geq5$. Define,
\begin{equation}\label{eq:def:Linfty0}
\cL_\infty(0)\,\ceq\,\frac{\#\cS_{\Ti}^{-1}(\{0\})}{\pi_\go(0)}\,,
\end{equation}
the renormalized local time at $0$ of the (infinite) critical snake $(\cS_\Ti,\Ti)$ defined in Section~\ref{sec:ergodicitysnake}. 

Let $\go\in\gO$. Let $\ell(x)$ denote the local time in $x\in\bbZ^d$ of a RWRE $(X_n)_{n\in\N}$ indexed by $\Spine(\Ti)\equiv \N$ and started from $0\in\bbZ^d$. Then, let $\T^i_x$, $i\in\N$, $x\in\bbZ^d$ be independent, \emph{finite} BGW trees with offspring distribution $\bbq=(q_k)_{k\geq0}$, except for the root which has offspring distribution $((k+1)q_{k+1})_{k\geq0}$. Let $\cS^i_x$ be independent RWRE in $\bbZ^d$ started from $x$, indexed by $\T^i_x$ in the environment $\go$. Write $L^i_{x,y}\ceq\#(\cS^i_x)^{-1}(y)$, $i\in\N$, $x,y\in\bbZ^d$ the local time in $y$ of the BRWRE $(\cS^i_x,\T^i_x)$ started from $x$. Then, recalling the mapping~\eqref{eq:pushforward}, one notices that, under $\bP^\go_0$ for $\bbP$-a.e. $\go$,
\begin{equation}\label{eq:linfty}
\cL_\infty(0)\,\overset{(d)}{=}\,\frac1{\pi_\go(0)}\sum_{x\in\bbZ^d}\sum_{i=1}^{\ell(x)} L^i_{x,0}\,.
\end{equation}
Some standard computations give that, for $x\in\bbZ^d$ and $i\in\N$,
\begin{align}\label{eq:lem:Xn:Lix:1stmom}
\bE^\go[\ell(x)]\,&=\,\sum_{n\geq0}\bP^\go_0(X_n=x)\,=\, \pi_\go(x)g^\go(0,x)\,,\\
\label{eq:lem:Xn:Lix:1stmom:bis}
\text{and}\qquad\bE^\go[L^{i}_{x,0}]\,&=\,\gs^2\sum_{n\geq0}\bP^\go_x(X_n=0)\,=\, \gs^2\pi_\go(0)g^\go(x,0)\,,
\end{align}
where we recall that $\gs^2=\sum_{k\geq0}k(k-1)q_k$, with $\bbq$ the critical BGW reproduction law. Recall the definitions of $P^\go_n(x,y)$, $g^\go(x,y)$ from~(\ref{eq:def:heatkernel}--\ref{eq:def:green}): hence, one has $\bbP$-a.s. that,
\begin{equation}\label{eq:Linfty0:1stmom}
\bE^\go[\cL_\infty(0)]
\;=\; \frac{\gs^2}{\pi_\go(0)} \sum_{x\in\bbZ^d}\sum_{k,\ell\geq0} \bP^\go_0(X_k=x)\bP^\go_x(X_\ell=0)
\;=\; \gs^2\sum_{n\geq0} (n+1) P^\go_{n}(0,0)\,.
\end{equation}

We provide the following statement, which holds for \emph{any} random, \emph{elliptic} environment $\go\in\gO$, (in particular, it reaches beyond the framework of Proposition~\ref{prop:01law} or Theorem~\ref{thm:main}).
An environment is called elliptic if $\bbP(\go_{x,y}>0)=1$ for all $x\sim y$.

\begin{lemma}\label{lem:trans}
Consider a random, elliptic environment $\go\in\gO$ and let $\bbP$ denote its law. Assume that $\sum_nnP^\go_{n}(0,0)<+\infty$ for $\bbP$-a.e. $\go$. Then the critical random walk snake is transient with $\bP$-probability one.
\end{lemma}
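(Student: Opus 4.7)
The strategy is to show that for every $x\in\bbZ^d$ the local time $\cL_\infty(x) \ceq \#\cS_\Ti^{-1}(x)/\pi_\go(x)$ is $\bP$-a.s.\ finite, and then conclude transience by a countable union bound over $x\in\bbZ^d$.

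First I would extend the first-moment identity \eqref{eq:Linfty0:1stmom} to an arbitrary $x$. Repeating the decomposition \eqref{eq:linfty} with $L^i_{y,x}$ in place of $L^i_{y,0}$, then applying \eqref{eq:lem:Xn:Lix:1stmom}--\eqref{eq:lem:Xn:Lix:1stmom:bis} and the Markov property exactly as in the derivation of \eqref{eq:Linfty0:1stmom}, one obtains
\[
\bE^\go[\cL_\infty(x)]\,=\,\gs^2 \sum_{y\in\bbZ^d}\pi_\go(y)\,g^\go(0,y)\,g^\go(y,x)\,=\,\gs^2\sum_{n\geq0}(n+1)\,P^\go_n(0,x),
\]
so it suffices to check that $\sum_{n} n P^\go_n(0,x) < +\infty$ for every $x\in\bbZ^d$ and $\bbP$-a.e.~$\go$.

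For this I would reduce to the diagonal case using ellipticity. Fix a nearest-neighbour path from $x$ to $0$ of some length $m=m(x)$: by ellipticity, $c(\go,x)\ceq\bP^\go_x(X_m=0)>0$ for $\bbP$-a.e.~$\go$. The Chapman--Kolmogorov bound
\[
\bP^\go_0(X_{n+m}=0)\,\geq\,\bP^\go_0(X_n=x)\,\bP^\go_x(X_m=0)
\]
combined with the definition \eqref{eq:def:heatkernel} then yields
\[
P^\go_n(0,x)\,=\,\frac{\bP^\go_0(X_n=x)}{\pi_\go(x)}\,\leq\,\frac{\pi_\go(0)}{c(\go,x)\,\pi_\go(x)}\,P^\go_{n+m}(0,0).
\]
Summing in $n$ and invoking the hypothesis $\sum_n n P^\go_n(0,0)<+\infty$ gives $\bE^\go[\cL_\infty(x)]<+\infty$, hence $\cL_\infty(x)<+\infty$ $\bP^\go$-a.s.\ for every $x\in\bbZ^d$ and $\bbP$-a.e.~$\go$. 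A union bound over the countable set $\bbZ^d$ yields $\bP(\exists\,x\in\bbZ^d,\,x\text{ visited i.o.})=0$.

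There is no genuine obstacle here: the only slightly delicate point is the extension of \eqref{eq:Linfty0:1stmom} to arbitrary $x$, which is a routine bookkeeping of the Kesten-tree decomposition already used for $x=0$; the comparison $P^\go_n(0,x)\lesssim P^\go_{n+m}(0,0)$ is a standard consequence of ellipticity via Chapman--Kolmogorov, and notably requires neither stationarity nor reversibility of the environment.
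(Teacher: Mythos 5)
Your proof is correct, and its first half coincides with the paper's: both establish $\bE^\go[\cL_\infty(0)]=\gs^2\sum_n(n+1)P^\go_n(0,0)<+\infty$ from \eqref{eq:Linfty0:1stmom} and the hypothesis. Where you diverge is in passing from ``$0$ is visited finitely often a.s.'' to ``no vertex is visited infinitely often a.s.''. The paper argues by contradiction, reusing the stopping-line argument from the proof of Proposition~\ref{prop:01law}: if some $x_0$ were visited infinitely often with positive quenched probability, the independent critical subtrees rooted at those visits would each hit $0$ with positive probability (by ellipticity), forcing $\cL_\infty(0)=+\infty$. You instead globalize quantitatively: you extend the first-moment identity to $\bE^\go[\cL_\infty(x)]=\gs^2\sum_n(n+1)P^\go_n(0,x)$ (a routine rerun of \eqref{eq:linfty}--\eqref{eq:Linfty0:1stmom} with target $x$), and then dominate $P^\go_n(0,x)$ by a $\go$-dependent multiple of $P^\go_{n+m}(0,0)$ via Chapman--Kolmogorov along a fixed nearest-neighbour path, which is where ellipticity enters for you. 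Both routes use ellipticity in an essential way and neither needs stationarity or reversibility. Your version is more self-contained (it avoids invoking the stopping-line/branching decomposition a second time) at the cost of the extra bookkeeping for general $x$; the paper's is shorter because that decomposition is already set up. One harmless point worth noting in your Chapman--Kolmogorov step: the path length $m$ automatically has the parity of $x$, so $c(\go,x)=\bP^\go_x(X_m=0)$ is indeed a.s.\ positive as a finite product of a.s.\ positive transition probabilities, and the a.s.\ statement over all $x\in\bbZ^d$ survives the countable union.
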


\begin{proof}[Proof of Lemma~\ref{lem:trans}]
It follows from~\eqref{eq:Linfty0:1stmom} that, for $\bbP$-a.e. $\go$, one has $\cL_\infty(0)<+\infty$, $\bP^\go$-a.s.; hence $0$ is visited finitely many times by the critical snake with $\bP^\go$-probability one for $\bbP$-a.e. $\go$. Then, mimicking the arguments from the proof of Proposition~\ref{prop:01law}, if there existed $x_0\in\bbZ^d$ which is visited infinitely many times with positive $\bP^\go$-probability, this would also hold for $0$, yielding a contradiction.
\end{proof}

\begin{proof}[Proof of Theorem~\ref{thm:main}.$(i)$]
Recall~\eqref{eq:thm:AH21:UB:P}: letting $p,q\in(1,+\infty]$ such that $1/p+1/q< 2/d$, and assuming $\go\in L^p(\bbP)$, $\go^{-1}\in L^q(\bbP)$, one has for $d\geq5$,
\[
\sum_{n\geq
0}nP^\go_{n}(0,0)\,\leq\, N_1(0)^4 + c\sum_{n>N_1(0)^2}n^{1-d/2}\,<\,+\infty\,,\quad\bbP\text{-a.s.},
\]
which concludes the proof by Lemma~\ref{lem:trans}.
\end{proof}

\begin{proof}[Proof of Theorem~\ref{thm:main:traps}.$(i)$]
Recall~(\ref{eq:lem:Xn:Lix:1stmom}--\ref{eq:Linfty0:1stmom}) and Proposition~\ref{prop:green:traps}. In particular, one has $\bbP$-a.s.,
\[
\bE^\go[\cL_\infty(0)]\,=\, \gs^2\sum_{x\in\bbZ^d} \pi_\go(x)g^\go(0,x)g^\go(x,0)\,\leq\, c\sum_{x\in\bbZ^d} |x|^{2(2-d)}\pi_\go(x)\,,
\]
where we used the standard estimate~\eqref{eq:UE:green}. Since we assumed $\sup_{x\in\bbZ^d} \bbE\pi_\go(x)<+\infty$, this yields,
\[
\bbE\bE^\go[\cL_\infty(0)]\,\leq \, c \sum_{x\in\bbZ^d} |x|^{2(2-d)}\bbE\pi_\go(x)\,=\,c \sum_{x\in\bbZ^d} |x|^{2(2-d)}\,.
\]
For $d\geq5$, the latter series converges. Therefore, one has $\bE^\go[\cL_\infty(0)]<+\infty$ $\bbP$-a.s., which yields the expected result by Lemma~\ref{lem:trans}.
\end{proof}

\subsection{Recurrence for conductances}
In this section we prove the recurrence of the critical snake for $d\leq4$. For $m\in\N$, we let $\gL_m\ceq\bbZ^d\cap[-m,m]^d$ and $\partial\gL_m\ceq\gL_{m+1}\setminus\gL_m$ throughout this section. 

We first consider the case $d\leq2$, and reproduce the following classical result. Since the critical snake $\cS_\Ti$ restricted to $\Spine(\Ti)$ is exactly a RWRE, notice that it directly implies the recurrence of the critical snake for $d\leq2$.
\begin{proposition}\label{prop:rwre:rec}
Let $d\leq2$, and assume $\go$ is a random conductance environment on $\bbZ^d$ such that $\sup_{x\sim y}\bbE\go_{x,y}<+\infty$. Then the RWRE is recurrent $\bP$-a.s..
\end{proposition}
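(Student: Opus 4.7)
The approach is to reduce the problem to an effective-resistance estimate and apply the Nash-Williams criterion, combined with a Fatou-type argument on dyadic blocks. Since $\go_{x,y}>0$ whenever $x\sim y$, the quenched chain is irreducible and reversible with invariant measure $\pi_\go$, so by the standard electrical-network characterization (see e.g.~\cite[Ch.~2]{LP16}) it is recurrent if and only if the effective resistance $R^\go_{\mathrm{eff}}(0\leftrightarrow\infty)$ of the weighted graph $(\bbZ^d,\go)$ is infinite. It therefore suffices to prove that this resistance is $\bbP$-a.s.\ infinite.

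Consider the spherical cutsets $\Pi_k\ceq\{\{x,y\}\,;\,x\sim y,\,|x|_\infty=k,\,|y|_\infty=k+1\}$ for $k\geq 0$: they are pairwise disjoint and each separates $0$ from infinity. Setting $\gS_k(\go)\ceq\sum_{e\in\Pi_k}\go_e$, Nash-Williams yields
\[
R^\go_{\mathrm{eff}}(0\leftrightarrow\infty)\;\geq\;\sum_{k\geq0}\frac{1}{\gS_k(\go)}\,.
\]
I would then apply Cauchy-Schwarz on dyadic blocks $I_j\ceq[2^j,2^{j+1})\cap\bbN$ of size $|I_j|=2^j$, obtaining
\[
\sum_{k\in I_j}\frac{1}{\gS_k(\go)}\;\geq\;\frac{4^j}{A_j(\go)}\,,\qquad A_j(\go)\ceq\sum_{k\in I_j}\gS_k(\go)\,.
\]
Since $|\Pi_k|=O(k^{d-1})$, the hypothesis $M\ceq\sup_{x\sim y}\bbE\go_{x,y}<+\infty$ gives $\bbE[A_j]\leq C_d M\cdot 2^{jd}$, so Fatou's lemma applied to the sequence $A_j/2^{jd}$ implies $\liminf_j A_j(\go)/2^{jd}<+\infty$ for $\bbP$-a.e.\ $\go$. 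Hence there exists an infinite (random) set $J(\go)\subset\bbN$ along which $A_j(\go)\leq L(\go)\,2^{jd}$. For $d=2$, each block $j\in J(\go)$ then contributes $4^j/A_j\geq 1/L(\go)$ to the Nash-Williams sum; for $d=1$, each such block contributes $4^j/A_j\geq 2^j/L(\go)\to+\infty$. In either case $\sum_{k\geq0}1/\gS_k(\go)=+\infty$, which concludes.

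The only delicate point is the $d=2$ case: a naive Cauchy-Schwarz on the full sum, $\sum_{k<n}1/\gS_k\geq n^2/\sum_{k<n}\gS_k$, only bounds the resistance by a constant, since $\bbE\sum_{k<n}\gS_k=O(n^2)$ exactly cancels the numerator. The dyadic-block version above is designed precisely to recover the missing logarithmic divergence by exploiting the Fatou bound at every scale. Notice that neither stationarity nor ergodicity of $\bbP$ enters the argument, only the uniform first-moment bound on the conductances and the deterministic combinatorics of the spherical cutsets.
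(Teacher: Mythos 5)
Your proof is correct, and it takes a genuinely different route from the paper's. The paper argues on the \emph{conductance} side: by the Dirichlet principle, $\pi_\go(0)\bP^\go(\tau^+_{\{0\}}>\tau^+_{\partial\gL_m})=C^\go(0,\partial\gL_m)\leq\cE^\go_m(h)$ where $h$ is the harmonic minimizer for the \emph{homogeneous} network; taking expectations and using $\sup_{x\sim y}\bbE\go_{x,y}<\infty$ gives a bound $c\,\tilde\bP(\tau^+_{\{0\}}>\tau^+_{\partial\gL_m})$, which tends to $0$ for $d\leq2$ by recurrence of the simple random walk, and monotonicity in $m$ upgrades this to the quenched statement. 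You instead work on the dual \emph{resistance} side via Nash--Williams, and the delicate point you correctly identify --- that in $d=2$ a global Cauchy--Schwarz only yields a constant lower bound --- is resolved by your dyadic-block decomposition plus Fatou, which extracts the divergence scale by scale. Each step checks out: the cutsets $\Pi_k$ are the disjoint edge boundaries of $\gL_k$, $|\Pi_k|=O(k^{d-1})$ gives $\bbE[A_j]\leq C_dM2^{jd}$, Fatou yields $\liminf_jA_j(\go)/2^{jd}<\infty$ a.s., and the infinitely many good blocks each contribute at least $1/L(\go)$ (resp.\ $2^j/L(\go)$) in $d=2$ (resp.\ $d=1$). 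What the paper's approach buys is brevity, by importing the known decay of the escape probability of the homogeneous walk through a single test function; what yours buys is a self-contained, quantitative argument that makes explicit where the logarithmic divergence of the resistance comes from in the critical dimension $d=2$, at the cost of the extra Fatou/block device. Both proofs use only the uniform first-moment bound on the conductances and neither needs stationarity or ergodicity, and both yield the quenched conclusion for $\bbP$-a.e.\ $\go$.
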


This result is standard, however for the sake of completeness we provide a proof in Appendix~\ref{app:dim2}.\smallskip

We now assume that $d\in\{3,4\}$. We use the Green's function estimates from Section~\ref{sec:estimates} to prove the recurrence in that case. Throughout this section, we let $p_0>1$ be large and assume $\go,\go^{-1}\in L^{p_0}(\bbP)$ so that all results from Section~\ref{sec:estimates} hold, with $\gz$ taken quite large in~\eqref{eq:thm:AH21:N1tail} and~\eqref{eq:thm:AH21:Ntail} ($\gz>d$ is enough). Let us recall that we are \emph{not} trying to obtain an optimal value for $p_0$ in this proof.
Recall~(\ref{eq:def:Linfty0}--\ref{eq:Linfty0:1stmom}). To prove Theorem~\ref{thm:main}.$(ii)$, it is enough to show that
\begin{equation}\label{eq:loctime:infinite}
\bP(\cL_\infty(0)=+\infty)\,>\,0\,,
\end{equation}
then the result follows from the 0--1 law, see Proposition~\ref{prop:01law}.

Let us prove~\eqref{eq:loctime:infinite} with a second moment method. Let us consider the BRWRE indexed by $\Ti$ which is \emph{constrained} to a box $\gL_m$, $m\in\N$ ---that is, particles from the critical snake $(\cS_\Ti,\Ti)$ that leave $\gL_m$ are removed from the process (even the particles from the spine). Let $\ell^m(x)$ denote the local time in $x\in\bbZ^d$ of the RWRE indexed by $\Spine(\Ti)$ constrained to $\gL_m$; and let $L^{i,m}_{x,y}$, $i\in\N$, $x,y\in\bbZ^d$ be the local time in $y$ of $(\cS^i_x,\T^i_x)$ constrained to $\gL_m$. Then, letting
\begin{equation}
\cL_m\,\ceq\,\frac1{\pi_\go(0)}\sum_{x\in \gL_m} \sum_{i=1}^{\ell^m(x)} L^{i,m}_{x,0}\,,
\end{equation}
one has that $\cL_m$ has same law as the renormalized local time in $0$ of the critical snake constrained to $\gL_m$. Moreover, one has $\bP^\go$-a.s. that $\cL_m$ is non-decreasing in $m$ and converges to $\cL_\infty(0)$ as $m\to+\infty$, for $\bbP$-a.e. $\go$ (recall \eqref{eq:linfty}). We have the following moment estimates on $\cL_m$.
\begin{lemma}\label{lem:Xn:1stmom}
Let $d\in\{3,4\}$. As $m\to+\infty$, one has,
\[
\bE[\cL_m]\,\asymp\, \sum_{x\in \gL_m} |x|^{2(2-d)} \asymp
    \begin{cases}
    m & \qquad\text{if }d=3,\\
    \log m & \qquad\text{if }d=4.
    \end{cases}
\]
\end{lemma}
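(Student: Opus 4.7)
The plan is to reduce to a quenched computation by computing $\bE^\go[\cL_m]$ explicitly in terms of the constrained Green's function $g^m$, and then to control the annealed expectation via the quenched Green's function estimates from Section~\ref{sec:estimates}, using integrability to absorb the contribution of the ``bad'' event off which those estimates break down.

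First I would mimic the derivation of~\eqref{eq:Linfty0:1stmom}, replacing $P^\go_n$, $g^\go$, $\ell$ and $L^i_{x,0}$ by their constrained counterparts. Combining the constrained analogues of~\eqref{eq:lem:Xn:Lix:1stmom}--\eqref{eq:lem:Xn:Lix:1stmom:bis} with a conditional Wald identity for the random sum $\sum_{i=1}^{\ell^m(x)} L^{i,m}_{x,0}$, together with the symmetry of $g^m$, yields
\[
\bE^\go[\cL_m] \,=\, \gs^2 \sum_{x \in \gL_m} \pi_\go(x)\, g^m(0,x)^2 \,, \qquad \bE[\cL_m] \,=\, \gs^2 \sum_{x\in\gL_m}\bbE\big[\pi_\go(x)\,g^m(0,x)^2\big]\,.
\]

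For the upper bound, I would use $g^m\leq g^\go$ and split $\bbE[\pi_\go(x)\, g^\go(0,x)^2]$ according to the event $\{N_2(0)\leq |x|\}$. On this event,~\eqref{eq:thm:AH21:UB:g} gives $g^\go(0,x)\leq c|x|^{2-d}$, so by stationarity in $x$ the contribution is at most $c|x|^{2(2-d)}\bbE[\pi_\go(0)]$. On the complementary event, a Hölder inequality combining the polynomial tail~\eqref{eq:thm:AH21:N1tail} of $N_2$, the $L^\gb$-integrability of $g^\go$ from~\eqref{eq:thm:AH21:greenLp}, and the assumption $\pi_\go\in L^{p_0}(\bbP)$ gives a bound of order $|x|^{-A}$ for $A$ arbitrarily large (provided $p_0$ and $\gz$ are taken sufficiently large in Section~\ref{sec:estimates}), which is negligible compared with $|x|^{2(2-d)}$.

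For the lower bound, the idea is to apply Lemma~\ref{lem:constrainedgreen} \emph{at scale} $|x|$: for a fixed large constant $M$ and every $x$ with $M\leq |x|\leq m/K$, the monotonicity $g^m\geq g^{K|x|}$ together with~\eqref{eq:lem:constrainedgreen:LB} (applied with $n=|x|$, and the roles of $x$, $y$ played by $0$ and $x$) give $g^m(0,x)\ge c|x|^{2-d}$ on the event $\gO^{K|x|+1}_{|x|}\cap\{N(0)\leq |x|\}$, whose probability tends to $1$ as $|x|\to\infty$ by~\eqref{eq:thm:AH21:N1tail}--\eqref{eq:thm:AH21:Ntail}. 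A second Hölder argument using $\pi_\go\in L^{p_0}(\bbP)$ then gives $\bbE[\pi_\go(x)\,\ind_{\mathrm{good}}]\ge \tfrac12\bbE[\pi_\go(0)]$ for $|x|\geq M$, and hence $\bE[\cL_m]\gtrsim \sum_{x\in\gL_{m/K}\setminus\gL_M}|x|^{2(2-d)}$. An integral comparison $\sum_{x\in\gL_m}|x|^{2(2-d)}\asymp \int_1^m r^{d-1+2(2-d)}\,\dd r$ then yields $\asymp m$ for $d=3$ and $\asymp \log m$ for $d=4$.

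The hard part will be the annealed decoupling between $\pi_\go(x)$ and $g^m(0,x)^2$: the quenched two-sided bounds on $g^m$ only hold off bad events whose size depends on $|x|$, so one must invest enough integrability (large $p_0$, large $\gz$) in the tail estimates~\eqref{eq:thm:AH21:N1tail}--\eqref{eq:thm:AH21:Ntail} to make the bad contribution summable over all $x\in\gL_m$ while remaining negligible in front of the main term.
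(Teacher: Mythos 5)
Your proposal is correct and follows essentially the same route as the paper: the quenched identity $\bE^\go[\cL_m]=\gs^2\sum_{x\in\gL_m}\pi_\go(x)\,g^m(0,x)^2$, an upper bound via $g^m\le g^\go$ together with~\eqref{eq:thm:AH21:UB:g} off the event $\{|x|\le N_2(0)\}$ whose contribution is absorbed by H\"older and the tail/integrability estimates, and a lower bound via Lemma~\ref{lem:constrainedgreen} with Cauchy--Schwarz to decouple $\pi_\go(x)$ from the good event. One detail where your version is actually the more careful one: by applying Lemma~\ref{lem:constrainedgreen} at scale $|x|$ for every $M\le |x|\le m/K$ (using $g^m\ge g^{K|x|}$), you recover the full sum $\sum_{x}|x|^{2(2-d)}$ directly, whereas the paper's single annulus $\tfrac n2\le|x|\le n$ with $m=Kn$ yields only a bounded contribution when $d=4$, so its ``standard estimation of the series'' implicitly requires summing over dyadic scales to reach the $\log m$ lower bound.
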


\begin{lemma}\label{lem:Xn:2ndmom} Let $d\in\{3,4\}$. There exists $c_2>c_1>1$ such that, for all $m\in\N$, one has,
\[
\bE[\cL_m^2]\,\leq\, c_1 \,\bbE\left[\bE^\go[\cL_m]^2\right] \,\leq\, c_2 \,\bE[\cL_m]^2\,.
\]
\end{lemma}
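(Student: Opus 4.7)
The plan is to decompose $\cL_m = \pi_\go(0)^{-1}\sum_{x\in\gL_m} S_x^m$ with $S_x^m \ceq \sum_{i=1}^{\ell^m(x)} L_{x,0}^{i,m}$, and to exploit that, conditionally on the environment $\go$ and the spine trajectory, the quantities $(L_{x,0}^{i,m})_{x,i}$ are mutually independent. Expanding $\bE^\go[\cL_m^2]$ and conditioning first on the spine yields, for $x\neq y$,
\[
\bE^\go[S_x^m S_y^m] \,=\, \bE^\go[\ell^m(x)\ell^m(y)]\,\sigma^4\pi_\go(0)^2 g^m(x,0)g^m(y,0)\,,
\]
by the constrained analog of~\eqref{eq:lem:Xn:Lix:1stmom:bis}, while the diagonal $x=y$ brings an extra term $\bE^\go[\ell^m(x)]\,\mathrm{Var}^\go(L_{x,0}^{1,m})$.

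Next I would rewrite everything as products of Green's functions using two standard identities. The covariance formula for the local time of a reversible RWRE gives $\bE^\go[\ell^m(x)\ell^m(y)]\leq \pi_\go(x)\pi_\go(y)\bigl[g^\go(0,x)g^\go(x,y) + g^\go(0,y)g^\go(y,x)\bigr]$, while a many-to-two computation for the critical tree-indexed walk controls $\bE^\go[(L_{x,0}^{1,m})^2]$ by $c\,\sigma^4\pi_\go(0)^2 \sum_z \pi_\go(z) g^\go(x,z)g^\go(z,0)^2$ up to lower-order terms. Assembling the pieces, $\bE^\go[\cL_m^2]$ is controlled by a finite combination of schematic sums of the form
\[
\sum_{x,y,z\in\gL_m} \pi_\go(x)\pi_\go(y)\pi_\go(z)\, g^\go(x,0)\, g^\go(y,0)\, g^\go(x,z)\, g^\go(z,y)\,,
\]
together with easier two-point analogs.

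Taking $\bbP$-expectations, I would apply Hölder's inequality to decouple the four Green's function factors, using that $g^\go(x,y)\in L^\beta(\bbP)$ for arbitrarily large $\beta$ by Theorem~\ref{thm:AH21:UB}.$(iii)$ (provided $p_0$ is chosen sufficiently large), together with $\pi_\go \in L^{p_0}(\bbP)$ and the pointwise bound $g^\go(x,y)\leq c|x-y|^{2-d}$ from~\eqref{eq:thm:AH21:UB:g} outside the random scale $N_2(x)$ (the complementary event being absorbed by the high moments). This delivers factorized estimates such as $\bbE[\pi_\go(x)\pi_\go(y) g^\go(x,0)^2 g^\go(y,0)^2]\lesssim |x|^{2(2-d)}|y|^{2(2-d)}$, and similarly for the triple sums. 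Summing the latter against the convolution $\sum_{z\in\bbZ^d} |x-z|^{2-d}|z-y|^{2-d}\lesssim |x-y|^{4-d}$ (with a logarithmic correction in $d=4$), all contributions are bounded by a constant times $\bigl(\sum_{x\in\gL_m}|x|^{2(2-d)}\bigr)^2$, which by Lemma~\ref{lem:Xn:1stmom} is of order $\bE[\cL_m]^2$. Applying exactly the same factorization to $\bE^\go[\cL_m]^2 = \sigma^4\sum_{x,y}\pi_\go(x)\pi_\go(y) g^m(x,0)^2 g^m(y,0)^2$ gives both inequalities of the lemma simultaneously.

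The main obstacle is the annealed decoupling of the $g^\go$ factors: since $g^\go(x,y)$ is a global functional of the environment, Assumption~\ref{assum:core} cannot be exploited through independence, and the whole estimate must proceed through high-moment bounds on $g^\go$ borrowed from~\cite{AH21}. This is precisely the mechanism that forces the integrability exponent $p_0$ in Theorem~\ref{thm:main}.$(ii)$ to be taken large (and not made explicit); within this scheme, the triple-Green's-function term arising from the many-to-two diagonal contribution is the most delicate to integrate against the three $\pi_\go$ weights.
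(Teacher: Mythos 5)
Your overall architecture---condition on the spine, use the covariance of the spine local times for the cross terms and a many-to-two estimate for the diagonal, then reduce everything to Green's function sums---is the same as the paper's (which organizes it as a conditional variance decomposition, \eqref{eq:Xn:Vardecompo}, plus Lemma~\ref{lem:manytotwo}), and your remark that a single bound $\bE[\cL_m^2]\leq C\,\bE[\cL_m]^2$ yields both inequalities via Jensen is fine. The problem is in the ``assembling'' and summation step. The terms that actually arise are \emph{double} sums of the schematic form $\sum_{x,y\in\gL_m}\pi_\go(x)\pi_\go(y)\,g(0,x)^2g(x,y)g(y,0)$ (cross term) and $\sum_{x,z\in\gL_m}\pi_\go(x)\pi_\go(z)\,g(0,x)g(x,z)g(z,0)^2$ (many-to-two diagonal), each carrying two weights $\pi_\go$, not the triple sum with three weights that you wrote. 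More importantly, your summation scheme fails quantitatively: the convolution bound $\sum_{z\in\gL_m}|x-z|^{2-d}|z-y|^{2-d}\lesssim|x-y|^{4-d}$ is false for $d=3$, where the sum over the box is of order $m$ (dominated by $|z|\asymp m$, not by $z$ near the segment $[x,y]$); and even taking your bound at face value, the resulting sum $\sum_{x,y\in\gL_m}|x|^{2-d}|y|^{2-d}|x-y|^{4-d}$ is of order $m^5$ in $d=3$, whereas $\bE[\cL_m]^2\asymp m^2$. The correct double sums do close, but via $\sum_{y\in\gL_m}|x-y|^{2-d}|y|^{2-d}\lesssim m^{4-d}$ (resp.\ $\log m$ for $d=4$) followed by $\sum_{x\in\gL_m}|x|^{2(2-d)}\asymp m^{4-d}$ (resp.\ $\log m$); as written, your argument does not establish the lemma.

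A second, more structural concern is the annealed decoupling of the Green's function factors. Estimate \eqref{eq:thm:AH21:greenLp} only gives finiteness of $\bbP$-moments of $g^\go(x,y)$ (uniformly in $x,y$ by stationarity), not $\|g^\go(x,y)\|_{L^\beta(\bbP)}\lesssim|x-y|^{2-d}$, so H\"older alone produces no spatial decay; your proposed fix---splitting on $\{|x-y|\geq N_2(x)\}$ and absorbing the complement through the tail of $N_2$---would have to be carried out for the factor $g^\go(x,y)$ with \emph{both} endpoints ranging over $\gL_m$, jointly with the weights $\pi_\go(x),\pi_\go(y)$, and you do not do this. The paper sidesteps the issue entirely: it bounds the expectation of the hard double sum by Cauchy--Schwarz as $c\,\bbE\big[\bE^\go[\cL_m]^2\big]^{1/2}\bbE\big[\sum_{x,y}g^m(x,y)^2g^m(x,0)^2\pi_\go(x)\pi_\go(y)\big]^{1/2}$ and then uses translation invariance ($\tau_{-x}$) to turn the second factor into a quantity of the same type as $\bE[\cL_{2m}]$, so that only one-endpoint estimates ($g^\go(0,\cdot)$, $N_2(0)$, Claim~\ref{claim:pi:Lp}) are ever needed. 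If you wish to keep your route, you must prove a genuine two-point bound such as $\bbE[\pi_\go(x)\pi_\go(y)g^\go(0,x)^2g^\go(x,y)g^\go(y,0)]\lesssim|x|^{2(2-d)}|x-y|^{2-d}|y|^{2-d}$; the Cauchy--Schwarz/shift-invariance argument is the cleaner path.
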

These two lemmas immediately imply~\eqref{eq:loctime:infinite}. Indeed, one deduces from the Paley-Zygmund inequality that, for all $m\in\N$,
\[
\bP\left(\cL_\infty(0)\geq \tfrac12\bE[\cL_m]\right)\,\geq\,\bP\left(\cL_m\geq \tfrac12\bE[\cL_m]\right)\,\geq\, c\,,
\]
for some uniform $c>0$. Since one has $\bE[\cL_m]\to+\infty$ as $m\to+\infty$ by Lemma~\ref{lem:Xn:1stmom}, this yields~\eqref{eq:loctime:infinite}, and therefore Theorem~\ref{thm:main}.$(ii)$ by the 0--1 law.

We turn to the proofs of the lemmas. Recall the definition of $g^m(\cdot,\cdot)$ from~\eqref{eq:def:heatkernel:ball}.

\begin{proof}[Proof of Lemma~\ref{lem:Xn:1stmom}]
Similarly to~(\ref{eq:lem:Xn:Lix:1stmom}--\ref{eq:Linfty0:1stmom}), one has for $m\in\N$,
\begin{equation}\label{eq:lem:Xn:Lix:1stmom:n}
\bE^\go[\ell^m(x)]\,=\,\pi_\go(x)g^m(0,x)\,,\qquad\text{and}\qquad\bE^\go[L^{i,m}_{x,0}]=\gs^2\pi_\go(0)g^m(x,0)\,,
\end{equation}
and thus,
\begin{equation}\label{eq:lem:Xn:1stmom}
\bE^\go[\cL_m] \,=\, \frac1{\pi_\go(0)}\sum_{x\in \gL_m} \bE^\go[\ell^m(x)] \bE^\go[L^{1,m}_{x,0}] \,=\, \gs^2 \sum_{x\in \gL_m} \pi_\go(x) g^m(0,x)^2 \,,
\end{equation}
where we also used that $g^m(\cdot,\cdot)$ and $g^\go(\cdot,\cdot)$ are symmetric (recall~\eqref{eq:green:rev}).\smallskip

\noindent\emph{Lower bound.} Let $K>0$ and $\gO^{Kn+1}_{n}\subset\gO$ as in Lemma~\ref{lem:constrainedgreen}, $N(0)$ as in Theorem~\ref{thm:AH21:LB}, and let us compute a lower bound on $\bE[\cL_{Kn}]$. Applying Lemma~\ref{lem:constrainedgreen}, there exists $c>0$ such that,
\begin{align}
\notag \bE[\cL_{Kn}] \,&\geq\,
\gs^2 \bbE\left[\ind_{\gO^{Kn+1}_{n}}(\go)\sum_{x\in \gL_n} \big(g^{Kn}(0,x)\big)^2 \pi_\go(x)\right] \\
\label{eq:lem:Xn:1stmom:LB}
&\geq\, c\sum_{\frac n2\leq |x|\leq n} |x|^{2(2-d)} \bbE\left[\pi_\go(x) 1_{\{|x|\geq N(0)\}}\ind_{\gO^{Kn+1}_{n}}(\go)\right]\,,
\end{align}
where, in the first inequality, we restricted the sum to $\gL_n\subset\gL_{Kn}$ in~\eqref{eq:lem:Xn:1stmom}. It remains to show that the latter expectation is bounded from below uniformly in $\frac n2\leq |x|\leq n$ for $n$ sufficiently large; then the result follows from standard estimations of the series $\sum_x |x|^{2(2-d)}$. One deduces from the Cauchy-Schwarz inequality and the assumption that $\bbP$ is stationary that, for $\frac n2\leq |x|\leq n$,
\begin{align*}
\bbE\left[\pi_\go(x) 1_{\{|x|<N(0)\}}\right]
\;&\leq\;\bbE\big[\pi_\go(0)^2\big]^{1/2} \bbP\big(N(0)>n/2\big)^{1/2}\,,\\
\text{and}\qquad \bbE\left[\pi_\go(x) \ind_{\gO\setminus\gO^{Kn+1}_{n}}(\go)\right]
\;&\leq\;\bbE\big[\pi_\go(0)^2\big]^{1/2} \bbP\big(\gO\setminus\gO^{Kn+1}_{n}\big)^{1/2}\,.
\end{align*}
Provided that $p_0\geq2$, the two terms above go to zero as $n\to+\infty$ uniformly in $\frac n2\leq |x|\leq n$. Moreover, by assumption $\bbE[\pi_\go(x)]$ is a positive constant uniform in $x\in\bbZ^d$, therefore a union bound yields immediately that the expectation in~\eqref{eq:lem:Xn:1stmom:LB} is bounded from below uniformly in $\frac n2\leq |x|\leq n$ for $n$ sufficiently large, finishing the proof of the lower bound.\smallskip

\noindent\emph{Upper bound.} Since $g^m(0,x)\leq g^\go(0,x)$, one deduces from~\eqref{eq:lem:Xn:1stmom}, \eqref{eq:green:rev} and~\eqref{eq:thm:AH21:UB:g},
\begin{equation}\label{eq:lem:Xn:1stmom:UB}
\bE[\cL_m] \,\leq\, c\,\bbE\left[\sum_{|x|\leq N_2(0)} g^\go(x,0)^2\pi_\go(x)\right] + c\,\bbE\left[\sum_{N_2(0)\leq|x|\leq 2m} |x|^{2(2-d)} \pi_\go(x)\right].
\end{equation}
Since $\bbE\pi_\go(0)<+\infty$ and $\bbP$ is stationary, the second term is lower than $c\sum_{|x|\leq 2m}|x|^{2(2-d)}$ for some constant $c>0$ and all $m\in\N$; hence it suffices to prove that the first term is bounded. Noticing that $g^\go(x,0)\leq g^\go(0,0)$ for all $x\in\bbZ^d$ and $\bbP$-a.e. $\go$, one deduces from H\"older's inequality for some $p\in(1,p_0)$ that,
\begin{equation}\label{eq:lem:Xn:1stmom:Holder}
\bbE\left[\sum_{|x|\leq N_2(0)} g^\go(x,0)^2\pi_\go(x)\right]\,\leq\, \bbE\left[g^\go(0,0)^{2p/(p-1)}\right]^{(p-1)/p}\bbE\Bigg[\bigg(\sum_{|x|\leq N_2(0)} \pi_\go(x)\bigg)^p\Bigg]^{1/p}.
\end{equation}
Recalling~\eqref{eq:thm:AH21:greenLp} and assuming that $\gz, p, p_0$ are taken sufficiently large, the first factor is a finite constant. Moreover, we have the following.
\begin{claim}\label{claim:pi:Lp}
Let $p,r\geq1$ and assume $pr\leq p_0$. Then there exists $c>0$ such that,
\begin{equation}
\Bigg\|\sum_{|x|\leq N_2(0)} \pi_\go(x)^r\Bigg\|_{L^p(\bbP)} \,\leq\, c\,\big\|\pi_\go(0)^r\big\|_{L^p(\bbP)}\|N_2(0)^{d}\|_{L^p(\bbP)}\,.
\end{equation}
\end{claim}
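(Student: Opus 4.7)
The starting point is the elementary inequality, for $p\ge 1$ and a finite sum of nonnegative terms, $(\sum_{i=1}^N a_i)^p \le N^{p-1}\sum_{i=1}^N a_i^p$ (Jensen's / power-mean inequality). Applied with $N=|\gL_{N_2(0)}|\le c\,N_2(0)^d$, it gives
\[
\bigg(\sum_{|x|\le N_2(0)}\pi_\go(x)^r\bigg)^p \;\le\; c\,N_2(0)^{d(p-1)}\sum_{|x|\le N_2(0)}\pi_\go(x)^{rp}.
\]
Taking expectation and swapping sum and expectation (Tonelli), the problem reduces to bounding
\[
\sum_{x\in\bbZ^d}\bbE\!\left[N_2(0)^{d(p-1)}\,\pi_\go(x)^{rp}\,\mathbf{1}_{N_2(0)\ge |x|}\right].
\]

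For each $x$, I would apply H\"older's inequality with a pair of conjugate exponents $(\alpha,\beta)$, $\alpha$ slightly larger than $1$, so as to separate the $\pi_\go$-factor from the $N_2(0)$-factor. Stationarity of $\bbP$ under $\bbZ^d$-translations (Assumption~\ref{assum:erg}) allows to rewrite $\bbE[\pi_\go(x)^{rp\alpha}]=\bbE[\pi_\go(0)^{rp\alpha}]$, which is finite since $rp\le p_0$ and we may take $\alpha$ close enough to $1$ that $rp\alpha\le p_0$ (using some slack in the assumption on $p_0$). For the $N_2(0)$-factor, I would use the Markov-type bound $\mathbf{1}_{N_2(0)\ge |x|}\le (N_2(0)/(|x|\vee 1))^{s}$ for an exponent $s>d$, which turns the $N_2(0)$-factor into $(|x|\vee 1)^{-s}\bbE[N_2(0)^{(d(p-1)+s)\beta}]^{1/\beta}$. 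The spatial sum $\sum_{x}(|x|\vee 1)^{-s}$ converges because $s>d$, and the remaining moment of $N_2(0)$ is finite thanks to the polynomial tail estimate~\eqref{eq:thm:AH21:N1tail}, which we may make of arbitrarily high order by taking $\gz$ sufficiently large in Theorem~\ref{thm:AH21:UB}.$(iii)$ (as is already assumed throughout this section).

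The main obstacle is the bookkeeping: one has to balance the three parameters $\alpha,\beta,s$ so that simultaneously (i) $rp\alpha\le p_0$, so that the $\pi_\go$-factor is controlled by $\|\pi_\go(0)^r\|_{L^p(\bbP)}$ up to a constant depending on $\alpha$ and $p_0$; (ii) the spatial sum $\sum_{x}(|x|\vee 1)^{-s}$ converges, i.e. $s>d$; and (iii) the moment $\bbE[N_2(0)^{(d(p-1)+s)\beta}]$ is finite, which is ensured by choosing $\gz$ large in~\eqref{eq:thm:AH21:N1tail}. None of these constraints is difficult individually, but one must check that they can be met simultaneously so as to produce a bound of the form $c\,\|\pi_\go(0)^r\|_{L^p(\bbP)}\|N_2(0)^d\|_{L^p(\bbP)}$, with $c$ depending only on $p,r,p_0,d$ and the law $\bbP$.
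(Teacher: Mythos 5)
Your proposal is correct in outline, but it takes a genuinely different (and longer) route than the paper after the shared first step. Both arguments begin with the power-mean inequality $(\sum a_i)^p\le N^{p-1}\sum a_i^p$ over the $\le c\,N_2(0)^d$ sites of the ball, yielding $c\,N_2(0)^{d(p-1)}\sum_{|x|\le N_2(0)}\pi_\go(x)^{rp}$. At that point the paper simply invokes the almost-sure bound~\eqref{eq:thm:AH21:UB:Lp}: since $N_2(0)\ge N_1(0)$, one has $\sum_{|x|\le N_2(0)}\pi_\go(x)^{rp}\le c_1 N_2(0)^d\,\bbE[\pi_\go(0)^{rp}]$ pointwise in $\go$, so the $\pi_\go$-factor comes out as a deterministic constant, only $N_2(0)^{dp}$ remains random, and taking expectations finishes the proof in one line. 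You instead take the expectation first and decouple $\pi_\go(x)^{rp}$ from $N_2(0)$ by H\"older plus the polynomial weight $\mathbf{1}_{N_2(0)\ge|x|}\le (N_2(0)/(|x|\vee1))^s$. This works, but costs you more: (i) you need $rp\alpha\le p_0$ for some $\alpha>1$, which fails in the boundary case $rp=p_0$ allowed by the claim's hypothesis (harmless here only because $p_0$ is not optimized in this section); (ii) you need a moment of $N_2(0)$ of order $(d(p-1)+s)\beta$ with $\beta$ large when $\alpha$ is close to $1$, hence a larger $\gz$ in~\eqref{eq:thm:AH21:N1tail} and therefore a larger $p_0=p_0(\gz)$ than the paper requires; and (iii) your bound carries $\bbE[\pi_\go(0)^{rp\alpha}]^{1/\alpha}$ rather than $\bbE[\pi_\go(0)^{rp}]$, which matches the stated right-hand side only because the constant $c$ is allowed to depend on the law $\bbP$. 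In short, the ingredient you are missing is that~\eqref{eq:thm:AH21:UB:Lp} is an almost-sure (quenched) volume-averaged bound by construction of $N_1$, which makes the whole decoupling machinery unnecessary.
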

In particular, this implies that the second factor in~\eqref{eq:lem:Xn:1stmom:Holder} is finite (assuming again that $\gz$ is large enough in~\eqref{eq:thm:AH21:N1tail}), which finishes the proof of the upper bound and Lemma~\ref{lem:Xn:1stmom}.
\end{proof}
\begin{proof}[Proof of Claim~\ref{claim:pi:Lp}]
By H\"older's inequality, one has
\begin{equation}\label{eq:pi:Holder}
\Bigg(\sum_{|x|\leq N_2(0)} \pi_\go(x)^r\Bigg)^p \,\leq\, c\, N_2(0)^{d(p-1)}\sum_{|x|\leq N_2(0)} \pi_\go(x)^{rp}
\end{equation}
for some $c>0$. Recalling~\eqref{eq:thm:AH21:UB:Lp} and taking the expectation, this finishes the proof of the claim.
\end{proof}

We now turn to the proof of the second moment estimates in Lemma~\ref{lem:Xn:2ndmom}.

\begin{proof}[Proof of Lemma~\ref{lem:Xn:2ndmom}]
We start by proving the second inequality. Recollecting~\eqref{eq:lem:Xn:1stmom}, recalling that $g^m(\cdot,\cdot)$ is symmetric, that $g^m(x,y)\leq g^\go(x,y)$, and that $g^\go(x,0)\leq g^\go(0,0)$ for all $x,y,m$ and $\bbP$-a.e. $\go$, one deduces from~\eqref{eq:thm:AH21:UB:g} that,
\begin{align}\label{eq:Xn:2ndmom:exact}
\bE^\go[\cL_m]^2\,&= \, \gs^4\sum_{x,y\in \gL_m} g^m(0,x)^2 g^m(0,y)^2 \pi_\go(x)\pi_\go(y) \\
\notag 
&\leq \, c\,g^\go(0,0)^4 \sum_{|x|,|y|\leq N_2(0)} \pi_\go(x)\pi_\go(y) +  c\sum_{x,y\in \gL_m} |x|^{2(2-d)} |y|^{2(2-d)} \pi_\go(x)\pi_\go(y) \\
\notag &\qquad + 2\,c \left(\sum_{|x|\leq N_2(0)} \pi_\go(x)\right)\left(\sum_{y\in \gL_m} |y|^{2(2-d)}  g^\go(0,0)^2 \pi_\go(y) \right).
\end{align}
Let us take the expectation $\bbE$ of the upper bound. Assuming $p_0, \gz$ are large enough, one deduces from Claim~\ref{claim:pi:Lp} that the term $\sum_{|x|\leq N_2(0)} \pi_\go(x)$ is bounded in $L^p(\bbP)$ for $p\in(2,p_0)$. Assuming also that $p$ is large, this and~\eqref{eq:thm:AH21:greenLp} imply by H\"older's inequality that,
\[
\bbE\left[g^\go(0,0)^4 \sum_{|x|,|y|\leq N_2(0)} \pi_\go(x)\pi_\go(y)\right]\,<\,+\infty\,.
\]
Regarding the second term, it is dominated by $c\bE[\cL_m]^2$, since one deduces from the Cauchy-Schwarz inequality that $\bbE[\pi_\go(x)\pi_\go(y)]$ is bounded uniformly in $x,y\in\bbZ^d$. Furthermore, for $p\in(2,p_0)$ and $q\ceq \frac{p}{p-1}<2$, one has by Jensen's inequality,
\begin{align*}
&\bbE\left[\Bigg(\sum_{y\in \gL_m} |y|^{2(2-d)}g^\go(0,0)^2 \pi_\go(y) \Bigg)^q\right]^{1/q} \,\leq\, \bbE\left[\Bigg(\sum_{y\in \gL_m} |y|^{2(2-d)}g^\go(0,0)^2 \pi_\go(y)\Bigg)^2\right]^{1/2}\\
&\qquad \leq \left( \sum_{x,y\in \gL_m} |x|^{2(2-d)}|y|^{2(2-d)} \bbE\left[g^\go(0,0)^4\pi_\go(x)\pi_\go(y)\right] \right)^{1/2}.
\end{align*}
Again, $\bbE\left[g^\go(0,0)^4\pi_\go(x)\pi_\go(y)\right]$ is bounded uniformly in $x,y\in\bbZ^d$, provided that $p_0$ and $\gz$ are large enough in Theorem~\ref{thm:AH21:UB}. Hence the term above is bounded by $c\sum_{y\in\gL_m}|y|^{2(2-d)}$. Thus, taking the expectation of the third term in the upper bound~\eqref{eq:Xn:2ndmom:exact} and applying H\"older's inequality, it is lower than $c \bE[\cL_m]$ for some $c>0$. Finally, recalling Lemma~\ref{lem:Xn:1stmom}, this yields that 
\begin{equation}\label{eq:lem:Xn:2ndmom:result1}
\bbE[\bE^\go[\cL_m]^2]\,\leq\, c\,\bE[\cL_m]^2\,,
\end{equation} for some $c>0$ and all $m\in\N$, which is the expected result.
\smallskip

We now prove the first inequality in Lemma~\ref{lem:Xn:2ndmom}. 
Let us write $\bVar^\go[\cL_m]\ceq \bE^\go[\cL_m^2]-\bE^\go[\cL_m]^2$. Using the conditional variance decomposition with respect $(\ell^m(x))_{x\in\bbZ^d}$, one obtains,
\begin{align}\label{eq:Xn:Vardecompo}
\bVar^\go(\cL_m)&=  \frac1{\pi_\go(0)^2}\sum_{x\in\gL_m} \bE^\go[\ell^m(x)]\bVar^\go(L^{1,m}_{x,0}) \,+\, \frac1{\pi_\go(0)^2}\bVar^\go\Bigg(\sum_{x\in\gL_m} \ell^m(x) \bE^\go\big[L^{1,m}_{x,0}\big]\Bigg)\\
\notag & \eqc Y_m + Z_m.
\end{align}
We claim the following.
\begin{lemma}\label{lem:manytotwo}
There exists $c>0$ such that, for $x\in\bbZ^d$, $m\in\N$, and $\bbP$-a.e. $\go$,
\[
\bE^\go\big[(L^{1,m}_{x,0})^2\big] \,\leq\, c\sum_{y\in \gL_n} g^m(x,y) g^m(y,0)^2 \pi_\go(y)\pi_\go(0)^2\,.
\]
\end{lemma}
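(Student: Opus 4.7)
The approach is a many-to-two decomposition of the second moment. Writing
\[
\bE^\go\big[(L^{1,m}_{x,0})^2\big]\,=\,\bE^\go\Bigg[\sum_{v_1,v_2\in\T^1_x}\ind_{\{\cS^1_x(v_1)=\cS^1_x(v_2)=0,\,\text{paths in }\gL_m\}}\Bigg],
\]
I would split the sum according to the most recent common ancestor (MRCA) $w$ of $(v_1,v_2)$ in $\T^1_x$ into three cases: (I) the diagonal $v_1=v_2$; (II) one of $v_1,v_2$ is a strict ancestor of the other; (III) $v_1,v_2$ lie in two distinct subtrees rooted at different children of $w$. Case (III) will produce the claimed bound, while (I) and (II) will be absorbed as lower-order terms.

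The heart of the argument is case (III). Conditionally on $w$ being an interior vertex at a position $z\in\gL_m$ with $\xi_w$ children, the two subtrees hanging below the two selected children are independent standard critical BGW subtrees (offspring law $\bbq$), each performing an independent constrained RWRE started from a one-step walk image of $z$. By the first-moment identity~\eqref{eq:lem:Xn:Lix:1stmom:bis} (adapted to the constrained walk), the conditional expected pair count equals $\xi_w(\xi_w-1)\big(\pi_\go(0)\sum_{z'}p^\go_{z,z'}g^m(z',0)\big)^2$. Averaging over $\xi_w$ with $\bE[\xi(\xi-1)]=\gs^2$ for interior vertices and $\bE[\zeta(\zeta-1)]=\sum_k k(k-1)(k+1)q_{k+1}<+\infty$ at the root (which is finite by the third-moment assumption $\sum_k k^3q_k<+\infty$), and summing over the expected count of vertices at position $z$, one obtains
\[
\bE^\go[\text{case (III)}]\,\le\,c\sum_{z\in\gL_m}g^m(x,z)\,g^m(z,0)^2\,\pi_\go(z)\,\pi_\go(0)^2,
\]
after invoking the elementary pointwise inequality $\sum_{z'}p^\go_{z,z'}g^m(z',0)\le g^m(z,0)$, a direct consequence of $(I-P^\go)g^m(\cdot,0)(z)=\ind_{\{z=0\}}/\pi_\go(0)\ge 0$ on $\gL_m$.

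Cases (I) and (II) should then be absorbed into the $z=0$ summand of the main bound. Case (I) is exactly $\bE^\go[L^{1,m}_{x,0}]$, bounded by $\gs^2\pi_\go(0)g^m(x,0)$ via~\eqref{eq:lem:Xn:Lix:1stmom:bis}. In case (II), conditioning on the position of the ancestor being $0$ and applying the first-moment identity to the BGW forest of its strict descendants yields a contribution of order $\pi_\go(0)^2 g^m(x,0)g^m(0,0)$. Both are dominated by the $z=0$ summand $g^m(x,0)g^m(0,0)^2\pi_\go(0)^3$ via the trivial inequality $\pi_\go(0)g^m(0,0)\ge 1$ (the walk started at $0$ contributes $1/\pi_\go(0)$ to $g^m(0,0)$ at time zero). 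The analogous root contribution to case (III) is absorbed into the $z=x$ summand using $\pi_\go(x)g^m(x,x)\ge 1$.

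The main technical subtlety is the careful bookkeeping of the distinction between the root offspring law $((k+1)q_{k+1})_{k\ge 0}$ and the interior offspring law $\bbq$: the third-moment assumption on $\bbq$ is precisely what guarantees that both relevant factorial moments are finite deterministic constants, so that all constants in the final bound are deterministic. Notably, no Green's function estimate from Section~\ref{sec:estimates} is needed at this stage; the bound is purely algebraic in the constrained quenched Green's function $g^m$ and holds for every $\go\in\gO$.
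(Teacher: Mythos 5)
Your proposal is correct and follows essentially the same route as the paper: a many-to-two decomposition over the most recent common ancestor, with the diagonal and ancestor-line terms absorbed into the $y=0$ summand via $\pi_\go(0)g^m(0,0)\ge 1$, and the third-moment hypothesis used exactly to control the factorial second moment of the root's offspring law $((k+1)q_{k+1})_{k\ge0}$. The only cosmetic difference is that you pass from the children's positions back to the ancestor's position via the explicit superharmonicity inequality $\sum_{z'}p^\go_{z,z'}g^m(z',0)\le g^m(z,0)$, where the paper does the same step implicitly by treating each sub-snake as started from the ancestor's site.
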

This lemma comes from a Many-to-two formula: we postpone its proof for now. By~\eqref{eq:lem:Xn:Lix:1stmom:n} and Lemma~\ref{lem:manytotwo}, one has,
\begin{equation}\label{eq:Xn:Vardecompo:Yn}
Y_m\,\leq \frac1{\pi_\go(0)^2} \sum_{x\in \gL_m}\bE^\go[\ell^m(x)]\bE^\go\big[(L^{1,m}_{x,0})^2\big]\leq c\sum_{x,y\in \gL_m} g^m(0,x) g^m(x,y) g^m(y,0)^2\pi_\go(x)\pi_\go(y),
\end{equation}
and,
\begin{align}
\notag Z_m &\leq \frac1{\pi_\go(0)^2}\bE^\go\Bigg[\bigg(\sum_{x\in \gL_m} \ell^m(x) \bE^\go\big[L^{1,m}_{x,0}\big]\bigg)^2\Bigg] \\
\notag &\leq \frac2{\pi_\go(0)^2}\sum_{x,y\in \gL_m} \bE^\go\left[ \sum_{\substack{u,v \in \Spine(T),\\u\prec v\text{ or }u=v}} \ind_{\{\cS_{T}(u)=x\}}\ind_{\{\cS_{T}(v)=y\}}\right] \bE^\go\big[L^{1,m}_{x,0}\big]\bE^\go\big[L^{1,m}_{y,0}\big]\\
\label{eq:Xn:Vardecompo:Zn}
&\leq c \sum_{x,y\in \gL_m} g^m(0,x)^2 g^m(x,y) g^m(y,0)\pi_\go(x)\pi_\go(y)\,.
\end{align}
Exchanging the notation $x$ and $y$ above, $Z_m$ and $Y_m$ have the same upper bound. Recall $\bE^\go[\cL_m]^2$ from~\eqref{eq:Xn:2ndmom:exact}: it suffices to show that the r.h.s. of~\eqref{eq:Xn:Vardecompo:Zn} is dominated by $c\bbE[\bE^\go[\cL_m]^2]$ for some $c>0$, then we conclude the proof of Lemma~\ref{lem:Xn:2ndmom} with~\eqref{eq:Xn:Vardecompo}.
\smallskip

Applying~\eqref{eq:green:rev} and the Cauchy-Schwarz inequality twice, the expectation of the r.h.s. of~\eqref{eq:Xn:Vardecompo:Zn} is bounded from above by,
\begin{align}\label{eq:Xn:2ndmom:Zn:1}
c\,\bbE\big[\bE^\go[\cL_m]^2\big]^{1/2}\bbE\Bigg[\sum_{x,y\in \gL_m} g^m(x,y)^2g^m(x,0)^2\pi_\go(x)\pi_\go(y)\Bigg]^{1/2}.
\end{align}
Moreover, $\bbP$ is invariant by the translation $\tau_{-x}$ for all $x\in\bbZ^d$, so one deduces that,
\begin{align}\label{eq:Xn:2ndmom:Zn:2}
\notag \bbE\left[\sum_{x,y\in \gL_m} g^m(x,y)^2 g^m(x,0)^2\pi_\go(x)\pi_\go(y)\right]\,
&\leq\, \bbE\left[\sum_{u,v\in \gL_{2m}} g^m(0,u)^2 g^m(0,v)^2\pi_\go(0)\pi_\go(u)\right]\\
\notag &=\, c\, \bbE\left[\bE^\go[\cL_{2m}]\,\pi_\go(0)\sum_{v\in \gL_{2m}} g^m(0,v)^2\right]\\
&\leq\, c\, \bE[\cL_{2m}]\, \bbE\left[\pi_\go(0)^2\left(\sum_{v\in \gL_{2m}} g^m(0,v)^2\right)^2\right]^{1/2}\,,
\end{align}
where we used~\eqref{eq:lem:Xn:1stmom}, the Cauchy-Schwarz inequality and~\eqref{eq:lem:Xn:2ndmom:result1}. Moreover, one deduces from~\eqref{eq:thm:AH21:UB:g} and the inequalities $g^m(0,v)\leq g^\go(0,v)\leq g^\go(0,0)$ that, for $\bbP$-a.e. $\go$,
\begin{equation}\label{eq:Xn:2ndmom:Zn:3}
0\,\leq\, \sum_{v\in \gL_{2m}} g^m(0,v)^2 \,\leq \, c\, g^\go(0,0)^2\, N_2(0)^d + c\sum_{v\in \gL_{2m}} |v|^{2(2-d)}\,.
\end{equation}
Plugging this into~\eqref{eq:Xn:2ndmom:Zn:2} and recalling from Lemma~\ref{lem:Xn:1stmom} that $\bE[\cL_{m}]\asymp \bE[\cL_{2m}]$, a direct computation yields 
that~\eqref{eq:Xn:2ndmom:Zn:2} is dominated by $c \bE[\cL_{m}]^2$.  Recollecting~\eqref{eq:Xn:2ndmom:Zn:1}, this finally yields that the r.h.s. of~\eqref{eq:Xn:Vardecompo:Zn} is bounded from above by the term $c\bbE\big[\bE^\go[\cL_m]^2\big]^{1/2} \bE[\cL_{m}]\leq c\bbE[\bE^\go[\cL_m]^2]$ for all $m\in\N$ (where we used Jensen's inequality), finishing the proof of Lemma~\ref{lem:Xn:2ndmom}.
\end{proof}

\begin{proof}[Proof of Lemma~\ref{lem:manytotwo}]
For a realization of the critical BRWRE $(\cS^1_x,\T^1_x)$, define $U\subset \T^1_x$ the set of vertices $u\in \T^1_x$ such that, for all $w\prec u$, one has $\cS^1_x(w)\in \gL_m$. Then,
\begin{align*}
\bE^\go\big[(L^{1,m}_{x,0})^2\big]= \bE^\go\Bigg[\bigg(\sum_{u\in U} \ind_{\{\cS^1_x(u)=0\}}\bigg)^2\Bigg]=\bE^\go[L^{1,m}_{x,0}] + \bE^\go\Bigg[\sum_{u,v\in U,u\neq v} \ind_{\{\cS^1_x(u)=0,\,\cS^1_x(v)=0\}}\Bigg].
\end{align*}
For $u,v\in\cT^1_x$, write $u\prec v$ if $u\neq v$ and $v$ is a descendant of $u$. On the one hand, one deduces from Markov's property that,
\[
\bE^\go\Bigg[\sum_{u,v\in U,u\prec v} \ind_{\{\cS^1_x(u)=0,\,\cS^1_x(v)=0\}}\Bigg]\,=\, \bE^\go\Bigg[\sum_{u\in U} \ind_{\{\cS^1_x(u)=0\}} \cdot \tilde L^{1,m}_{0,0}\Bigg]\,=\, \bE^\go\big[L^{1,m}_{x,0}\big]\bE^\go\big[L^{1,m}_{0,0}\big]\,,
\]
where $\tilde L^{1,m}_{0,0}$ is a copy of $L^{1,m}_{0,0}$, i.e. the local time in $0$ of some critical BRWRE $(\tilde\cS_0^1,\tilde \T^1_0)$ started from $0$ and constrained in $\gL_m$; and is taken independent from $(\cS^1_x,\T^1_x)$. On the other hand, for $u,v$ which are not on the same genealogical line, we let $w\ceq u\wedge v$ denote their most recent common ancestor and $c(w)$ its number of children in $\cT^1_x$. Then one deduces from Markov's property that,
\begin{align*}
&\bE^\go\Bigg[\sum_{w\in U} \,\sum_{\substack{u,v\in U,\\u\succ w, v\succ w,\\ u\wedge v=w}} \ind_{\{\cS^1_x(u)=0\}}\ind_{\{\cS^1_x(v)=0\}}\Bigg]\\
&\qquad=\, 
\bE^\go\Bigg[\sum_{y\in \gL_m}\sum_{w\in U} \ind_{\{\cS^1_x(w)=y\}} \,\sum_{\substack{u,v\in U,\\u\succ w, v\succ w,\\ u\wedge v=w}} \ind_{\{\cS^1_x(u)=0\}}\ind_{\{\cS^1_x(v)=0\}}\Bigg]\\
&\qquad= \,\bE^\go\Bigg[\sum_{y\in \gL_m}\sum_{w\in U} \ind_{\{\cS^1_x(w)=y\}}   \cdot  c(w)(c(w)-1) \cdot \tilde L^{1,m}_{y,0}  \cdot \tilde L^{2,m}_{y,0} \Bigg]\,,
\end{align*}
with $\tilde L^{i,m}_{y,0}$, $i\in\{1,2\}$, is the local time in $0$ of some independent BRWRE started from $y$ and constrained to $\gL_m$. Recalling that the root of $\cT^1_x\sim \PT$ reproduces as $((k+1)q_{k+1})_{k\geq0}$ and that $\bq$ has a finite third moment, one has
\begin{equation}\label{eq:useofthirdmoment}
\ET\big[c(w)^2\big]\,\leq\, \max\left(\gs^2+1,\sum_{k\geq1}k^3q_k\right)<+\infty\,.
\end{equation}
Recalling~\eqref{eq:lem:Xn:Lix:1stmom:n}, one finally obtains,
\begin{align*}
&\bE^\go\big[(L^{1,m}_{x,0})^2\big]\\
&\qquad\leq\, c\,\pi_\go(0) g^m(x,0) + 2 c\, \pi_\go(0)^2 g^m(x,0) g^m(0,0) + c \sum_{y\in \gL_m} \pi_\go(y)\pi_\go(0)^2 g^m(x,y) g^m(y,0)^2\\
&\qquad\leq\, c\sum_{y\in \gL_m}  g^m(x,y) g^m(y,0)^2 \pi_\go(y)\pi_\go(0)^2,
\end{align*}
where we used that $\pi_\go(0)g^m(0,0)=\bE^\go[L^{1,m}_{0,0}]\geq 1$.
\end{proof}

\subsection{Recurrence for traps}
When $d\leq 2$, recall that the BRWRE $(\cS_\Ti,\Ti)$ restricted to $\Spine(\cTi)$ has the same law as the RWRE $(X_n)_{n\geq0}$ in random traps, and that the latter visits the same vertices of $\bbZ^d$ as a simple random walk (recall e.g. the proof of Proposition~\ref{prop:green:traps}). Therefore, the critical snake is $\bP$-a.s. recurrent for $d\leq2$.

We now assume $d\in\{3,4\}$, that $\bbE \pi_\go(0) <+\infty$, and that Assumptions~\ref{assum:erg},~\ref{assum:traps} hold. Similarly to the random conductances environment, we proceed with a second moment method, however the truncation we use here is twofold: we kill particles when they leave the box $\gL_m$ for some $m\in\N$, or when they fall in a ``large trap''. Let $R>0$ and define, for $\bbP$-a.e. $\go$,
\begin{equation}
A_\go\,\ceq\, \{x\in\bbZ^d\,;\, \pi_\go(x) \geq R\,|x|^{2}\}\,.
\end{equation}
Let us consider the BRWRE indexed by $\Ti$ which is killed when it reaches \emph{either} $\partial \gL_m$ \emph{or} $A_\go$. Let $\cZ_m$ denote its renormalized local time in $0$: we shall prove that $\cZ_m$ satisfies the same annealed moment estimates than $\cL_m$ in Lemmata~\ref{lem:Xn:1stmom} and~\ref{lem:Xn:2ndmom}, then Theorem~\ref{thm:main:traps}.$(ii)$ follows from the same arguments as in the conductances case (i.e. the Paley-Zygmund inequality and Proposition~\ref{prop:01law}, we do not reproduce them).

\begin{lemma}\label{lem:Zn:1stmom}
Let $d\in\{3,4\}$. One has for some $c>0$ uniform in $m\in\N$,
\[
\bE[\cZ_m]\;\geq\; 
    \begin{cases}
    c\, m & \qquad\text{if }d=3,\\
    c\, \log m & \qquad\text{if }d=4.
    \end{cases}
\]
\end{lemma}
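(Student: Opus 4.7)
The plan is to mimic the lower bound proof of Lemma~\ref{lem:Xn:1stmom}, adapted to handle the additional killing at $A_\go$. First, the many-to-one identity for the doubly-killed BRWRE (derived exactly as in~\eqref{eq:lem:Xn:1stmom:n}--\eqref{eq:lem:Xn:1stmom}) gives
\[
\bE^\go[\cZ_m] \,=\, \gs^2 \sum_{x \in \gL_m} \pi_\go(x)\,\hat g^m(0,x)^2,
\]
where $\hat g^m(0,x)$ is the quenched Green's function of the RWRE killed on $\gL_m^c \cup A_\go$ (with the convention that $\hat g^m(0,x)=0$ for $x \in A_\go$). Running the $X$--$Y$ comparison from the proof of Proposition~\ref{prop:green:traps} with the additional killing shows that $\hat g^m(0,x)$ coincides with the SRW Green's function killed on $\gL_m^c \cup A_\go$, so in particular it does not depend on the depths $(\rho_y)_y$.

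Next I would lower bound $\bE[\cZ_m]$ via the decomposition $\bE[\cZ_m] = \bE[\cL_m] - \bE[\cL_m - \cZ_m]$, with $\cL_m$ the local time of the BRWRE killed only on $\gL_m^c$. The main term satisfies the lower bound of Lemma~\ref{lem:Xn:1stmom} with $g^\go$ replaced by the SRW Green's function $g$ via Proposition~\ref{prop:green:traps}, yielding $\bE[\cL_m] \gtrsim m$ for $d=3$ and $\bE[\cL_m] \gtrsim \log m$ for $d=4$. For the loss term, using
\[
g^m(0,x)^2 - \hat g^m(0,x)^2\ind_{x \notin A_\go} \,\leq\, g^m(0,x)^2\,\ind_{x \in A_\go} \,+\, 2\,g^m(0,x)\,\bE^{\mathrm{SRW}}_0\!\bigl[\ind_{T_A < \tau_m}\, g^m(Y_{T_A}, x)\bigr],
\]
where $T_A$ is the first hit of $A_\go$ by the underlying SRW $(Y_n)$, and taking annealed expectations, the iid structure of the trap depths together with Markov's inequality $\bbP(y \in A_\go) \leq \bbE[\pi_\go(0)]/(R(|y|\vee 1)^2)$ reduce the second term to convolution-type sums of the form $(C/R)\sum_y|y|^{-d}|y-x|^{2-d}$, while the first term is $o(\bE[\cL_m])$ by dominated convergence using only $\bbE[\pi_\go(0)]<\infty$.

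The main obstacle is that $A_\go$ sits exactly at the critical scale under the sole assumption $\bbE[\pi_\go(0)]<\infty$: the convolution sums carry extra logarithmic factors in $d\in\{3,4\}$, so one cannot naively take $R$ fixed independently of $m$. A cleaner alternative that avoids the loss decomposition is to restrict the sum over $x$ to $\{|x|\leq x_{\max}(R)\}$ with $x_{\max}(R)\sim R^{1/(d-2)}$, apply Jensen's inequality $\bbE[\hat g^m(0,x)^2]\geq\bbE[\hat g^m(0,x)]^2$ after using the independence of $\pi_\go(x)$ from $\hat g^m(0,x)\ind_{x\notin A_\go}$, and estimate the annealed survival probability of the SRW against $A_\go$ via Markov's inequality on the expected number of trap hits $\sum_y \bbP(y\in A_\go)\,g^m(0,y)\lesssim (\log m)/R$. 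Choosing $R$ sufficiently large as a function of $m$ then delivers the claimed order, with the constant $c$ absorbing the dependence on $R$.
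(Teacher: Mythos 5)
Your general strategy (many-to-one reduction to the Green's function of the killed SRW, then controlling the effect of the trap-killing via Markov's inequality and $\bbE[\pi_\go(0)]<+\infty$) is the right one, and you have correctly located the delicate point: $A_\go$ sits exactly at the critical scale. But your resolution of that difficulty is where the proof goes wrong. First, letting $R$ grow with $m$ changes the object: $\cZ_m$ is defined for a \emph{fixed} $R$, chosen large once and for all, and the companion estimate (Lemma~\ref{lem:Zn:2ndmom}) together with the Paley--Zygmund step require the \emph{same} $\cZ_m$ in both moments. With $R=R(m)$ the near-diagonal term $R\sum_{x\in\gL_m}|x|^{4(2-d)+2}$ in the second-moment bound is no longer of the right order (for $d=4$ it becomes $O(R(m))\gg(\log m)^2$ as soon as $R(m)\to\infty$ polynomially, which your choice $x_{\max}(R)\geq m$ forces). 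Second, the independence you invoke between $\pi_\go(x)$ and $\hat g^m(0,x)\ind_{x\notin A_\go}$ is false: the indicator is $\ind_{\{\pi_\go(x)<R|x|^2\}}$, a function of $\pi_\go(x)$, and the killed Green's function depends on the depths through the random set $A_\go$. The paper sidesteps this entirely by using the deterministic bound $\pi_\go(x)\geq1$ (valid for traps) and then Jensen, $\bbE[\tilde g^{Kn}(0,x)^2]\geq\bbE[\tilde g^{Kn}(0,x)]^2$.

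The perceived logarithmic obstruction is an artifact of applying Markov's inequality pointwise everywhere. The paper keeps $R$ fixed and bounds the loss term
\[
\sum_{y\in\gL_{Kn}}|y|^{2-d}|x-y|^{2-d}\,\bbP\big(\pi_\go(0)\geq R|y|^2\big)
\]
only for $x$ in the annulus $n/2\leq|x|\leq n$ (a restriction already imposed by the constrained Green's function lower bound of Lemma~\ref{lem:constrainedgreen}). On $\{|x-y|\leq n/4\}$ one has $|y|\gtrsim n$, so pointwise Markov costs only $cR^{-1}n^{-d}\sum_{|x-y|\leq n/4}|x-y|^{2-d}\leq cR^{-1}n^{2-d}$; on $\{|x-y|\geq n/4\}$ one bounds $|x-y|^{2-d}\leq cn^{2-d}$ and controls the remaining sum by the layer-cake identity
\[
\sum_{y}|y|^{2-d}\,\bbP\big(\pi_\go(0)\geq R|y|^2\big)\,\leq\,c\int_0^{+\infty}r\,\bbP\big(\pi_\go(0)\geq Rr^2\big)\,\dd r\,=\,\frac{c}{2R}\,\bbE[\pi_\go(0)]\,,
\]
which carries no logarithm precisely because one integrates the tail rather than substituting Markov's bound under the integral. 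Taking $R$ large but fixed then yields $\bbE[\tilde g^{Kn}(0,x)]\geq \tfrac{c_1}{2}|x|^{2-d}$ uniformly on the annulus, and summing $\bbE[\tilde g^{Kn}(0,x)]^2$ over $n/2\leq|x|\leq n$ for dyadic $n\leq m/K$ gives the claimed orders. You should replace your $m$-dependent truncation by this argument.
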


\begin{lemma}\label{lem:Zn:2ndmom} Let $d\in\{3,4\}$. One has for some $c>0$ uniform in $m\in\N$,
\[
\bE[\cZ_m^2]\;\leq\; 
    \begin{cases}
    c\, m^2 & \qquad\text{if }d=3,\\
    c\, (\log m)^2 & \qquad\text{if }d=4.
    \end{cases}
\]
\end{lemma}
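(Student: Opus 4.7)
\emph{Proof sketch.} The plan is to mirror the proof of Lemma~\ref{lem:Xn:2ndmom}, with $g^m(\cdot,\cdot)$ replaced by the Green's function $\tilde g^m(\cdot,\cdot)$ of the random walk killed on $\partial\gL_m\cup A_\go$. Two features of the random traps setting make things much simpler than in the conductance case: Proposition~\ref{prop:green:traps} gives $g^\go(x,y)=g(x,y)$, so $\tilde g^m(x,y)\le g^\go(x,y)\le c(1+|x-y|)^{2-d}$ holds deterministically by~\eqref{eq:UE:green}; and since the $\rho_x$ are i.i.d., the variables $\pi_\go(x)$ are mutually independent across $x\in\bbZ^d$, while the truncation $\pi_\go(x)\ind_{x\notin A_\go}\le R|x|^2$ plays the role of a deterministic replacement for the missing higher moments of $\pi_\go$.

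Applying the conditional variance decomposition~\eqref{eq:Xn:Vardecompo} together with the many-to-two estimate of Lemma~\ref{lem:manytotwo} (whose proof goes through verbatim with $\tilde g^m$ and the exclusion set $U=\{u\in\T^1_x:\cS^1_x(w)\in\gL_m\setminus A_\go\text{ for all }w\prec u\}$) leads to a bound of the form
\[
\bE^\go[\cZ_m^2]\,\leq\,c\sum_{x,y\in\gL_m\setminus A_\go}\bigl[\tilde g^m(0,x)^2\tilde g^m(0,y)^2+\tilde g^m(0,x)^2\tilde g^m(x,y)\tilde g^m(y,0)+\tilde g^m(0,x)\tilde g^m(x,y)\tilde g^m(y,0)^2\bigr]\pi_\go(x)\pi_\go(y),
\]
the three terms accounting respectively for $\bE^\go[\cZ_m]^2$, $Z_m$ and $Y_m$. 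I would then take $\bbE$ and separate each sum into $x=y$ and $x\neq y$. On the off-diagonal part, independence of the $\pi_\go$'s gives $\bbE[\pi_\go(x)\pi_\go(y)]=\bbE[\pi_\go(0)]^2$, one drops the restriction $x,y\notin A_\go$, and one is left with purely deterministic sums over $\gL_m$ of products of four factors $(1+|\cdot|)^{2-d}$, which the standard convolution estimate $\sum_{y\in\gL_m}(1+|y|)^{2-d}(1+|x-y|)^{2-d}\leq c(m\vee\log m)$ (uniform in $x\in\gL_m$) bounds by $O(m^2)$ for $d=3$ and $O((\log m)^2)$ for $d=4$, matching the desired order.

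On the diagonal $x=y$, the truncation enters crucially through the pointwise inequality $\pi_\go(x)^2\ind_{x\notin A_\go}\le R|x|^2\pi_\go(x)$, giving $\bbE[\pi_\go(x)^2\ind_{x\notin A_\go}]\le R|x|^2\bbE[\pi_\go(0)]$; the resulting deterministic sums $\sum_{x\in\gL_m}|x|^{4(2-d)+2}$ from the first moment squared and $\sum_{x\in\gL_m}|x|^{3(2-d)+2}$ from $Y_m,Z_m$ are of lower order than the claimed bound in both $d=3$ and $d=4$. The main obstacle is precisely this diagonal step: without the cut-off at $A_\go$, the quantity $\bbE[\pi_\go(x)^2]$ could be infinite under Assumption~\ref{assum:traps} alone and the whole second moment strategy would collapse. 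Everything else amounts to routine convolution bookkeeping with the deterministic simple-random-walk Green's function $g$, which is substantially simpler than its random counterpart treated in Lemma~\ref{lem:Xn:2ndmom}.
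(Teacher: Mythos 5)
Your overall architecture is the same as the paper's: the conditional variance decomposition~\eqref{eq:Xn:Vardecompo}, the many-to-two bound of Lemma~\ref{lem:manytotwo} adapted to the walk killed on $\partial\gL_m\cup A_\go$, the deterministic Green's function bound from Proposition~\ref{prop:green:traps} and~\eqref{eq:UE:green}, the truncation $\pi_\go(x)\ind_{\{x\notin A_\go\}}\le R|x|^2$ on diagonal terms, and the same convolution estimates (your direct convolution bound for the $Y_m,Z_m$ sums is a touch more streamlined than the paper's Cauchy--Schwarz/shift-invariance detour, and is fine). The one genuine flaw is the off-diagonal step: you assert that the $\rho_x$ are i.i.d., hence $\bbE[\pi_\go(x)\pi_\go(y)]=\bbE[\pi_\go(0)]^2$ for $x\neq y$. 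Independence is not among the hypotheses of Theorem~\ref{thm:main:traps}.$(ii)$: the traps environment is only assumed stationary and ergodic with $\bbE\pi_\go(0)<+\infty$ and satisfying Assumption~\ref{assum:traps} --- an assumption that would be pointless if the $\pi_\go(x)$ were independent with finite mean. Under the actual hypotheses, $\bbE[\pi_\go(x)\pi_\go(y)]$ is controlled only for $|x-y|>R_0$ and may be infinite for $0<|x-y|\le R_0$, so your off-diagonal argument fails on that band.

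The repair is exactly what the paper does: split at $|x-y|\ge R_0$, where Assumption~\ref{assum:traps} gives $\bbE[\pi_\go(x)\pi_\go(y)]\le K$, versus $|x-y|<R_0$, where the truncation gives $\bbE[\pi_\go(x)\pi_\go(y)\ind_{\{x,y\notin A_\go\}}]\le R|x|^2\,\bbE[\pi_\go(y)]\le c|x|^2$; the near-diagonal band then contributes, up to a constant depending on $R_0$, the same orders as your diagonal sums $\sum_x|x|^{4(2-d)+2}$ and $\sum_x|x|^{3(2-d)+2}$, which are indeed $O(m^2)$ for $d=3$ and $o((\log m)^2)$ for $d=4$. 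With that substitution your computation coincides with the paper's and the remaining bookkeeping is correct.
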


Recall the definitions of $\ell^m(x)$, $L^{i,m}_{x,y}$ for the BRWRE killed outside at $\partial\gL_m$, and that~\eqref{eq:lem:Xn:Lix:1stmom:n} is unchanged in a random traps environment. We define similarly $\tilde\ell^m(x)$, $\tilde L^{i,m}_{x,y}$ for the BRWRE killed at $\partial\gL_m\cup A_\go$: in particular one has for $m\in\N$,
\begin{equation}
\cZ_m\,\ceq\,\frac1{\pi_\go(0)}\sum_{x\in \gL_m} \sum_{i=1}^{\tilde\ell^m(x)} \tilde L^{i,m}_{x,0}\,.
\end{equation}
Recall also from Proposition~\ref{prop:green:traps} that Green's function for the (non-killed) RWRE satisfies $g^\go(\cdot,\cdot)=g(\cdot,\cdot)$ $\bbP$-a.s.; and let $\tilde g^m$ denote Green's function of the (quenched) RWRE killed at $\partial\gL_m\cup A_\go$.

\begin{proof}[Proof of Lemma~\ref{lem:Zn:1stmom}]
Let $K>0$, $n\in\N$. Since one has $\pi_\go(x)\geq1$ $\bbP$-a.s. for all $x\in\bbZ^d$ in the random traps environment, one deduces from Jensen's inequality that,
\begin{align*}
\bE[\cZ_{Kn}]\,=\, \gs^2 \bbE\left[\sum_{x\in \gL_{Kn}} \tilde g^{Kn}(0,x)^2 \pi_\go(x)\right]\,\geq\, \gs^2 \sum_{n/2\leq |x|\leq n} \bbE\big[\tilde g^{Kn}(0,x)\big]^2 \,.
\end{align*}
Let us show that $\bbE[\tilde g^{Kn}(0,x)]\geq c|x|^{2-d}$ uniformly in $n/2\leq|x|\leq n$.
Recalling Lemma~\ref{lem:constrainedgreen}, there exists $c_1>0$ such that for $n/2\leq|x|\leq n$, one has $\bbP$-a.s.,
\[
c_1\,|x|^{2-d}\,\leq\,g^{Kn}(0,x)\,\leq\, \tilde g^{Kn}(0,x) + \sum_{y\in \gL_{Kn}} g^\go(0,y)g^\go(y,x)\ind_{\{y\in A_\go\}}\,,
\]
Moreover, the stationarity of $\go$ yields $\bbP(y\in A_\go)=\bbP(\pi_\go(0)\geq R\,|y|^2)$. 
Taking the expectation $\bbE$ above, and using Proposition~\ref{prop:green:traps} and~\eqref{eq:UE:green}, we claim that it is enough to show that
\begin{equation}\label{eq:lem:Zn:1stmom:1}
\sum_{y\in \gL_{Kn}} |y|^{2-d}|x-y|^{2-d}\,\bbP(\pi_\go(0)\geq R\,|y|^2)\,\leq\, \frac{c_1}2 |x|^{2-d}\,,
\end{equation}
and the result follows. On the one hand we have by Markov's inequality,
\begin{equation*}
\sum_{y:|x-y|\leq n/4} |y|^{2-d}|x-y|^{2-d}\,\bbP\big(\pi_\go(0)\geq R|y|^2\big)\,\leq\, \frac{c}{R}\, n^{-d} \sum_{y:|x-y|\leq n/4} |x-y|^{2-d}\,\leq\, \frac{c}{R} n^{2-d}\,,
\end{equation*}
for some $c>0$ that does not depend on $R$. On the other hand,
\begin{equation*}
\sum_{y:|x-y|\geq n/4} |y|^{2-d}|x-y|^{2-d}\,\bbP\big(\pi_\go(0)\geq R|y|^2\big)\,\leq\, c\, n^{2-d} \sum_{y:|x-y|\geq n/4} |y|^{2-d}\bbP\big(\pi_\go(0)\geq R|y|^2\big)\,,
\end{equation*}
and
\[
\sum_{y:|x-y|\geq n/4} |y|^{2-d}\bbP\big(\pi_\go(0)\geq R|y|^2\big) \,\leq\, c\int_0^{+\infty} r\,\bbP\big(\pi_\go(0)\geq R\,r^2\big) \, \dd r\,\leq\, \frac{c}R\,,
\]
where we used that $\bbE[\pi_\go(0)]<+\infty$. Assuming $R$ is large enough, this finally yields~\eqref{eq:lem:Zn:1stmom:1}.
\end{proof}

\begin{proof}[Proof of Lemma~\ref{lem:Zn:2ndmom}]
Recall~(\ref{eq:Xn:Vardecompo}--\ref{eq:Xn:Vardecompo:Zn}): reproducing this computation with $\cZ_m$ and using Proposition~\ref{prop:green:traps}, this yields $\bbP$-a.s.,
\begin{align*}
\bVar^\go(\cZ_m)&= \bE^\go[\cZ_m^2]-\bE^\go[\cZ_m]^2 \\
&\leq c\sum_{x,y\in \gL_m} g(0,x)^2g(x,y)g(0,y) \pi_\go(x)\pi_\go(y)\ind_{\{\pi_\go(x)\leq R|x|^2,\pi_\go(y)\leq R|y|^2\}},\\
\text{and}\qquad \bE^\go[\cZ_m]^2&\leq c\sum_{x,y\in \gL_m} g(0,x)^2g(0,y)^2\pi_\go(x)\pi_\go(y)\ind_{\{\pi_\go(x)\leq R|x|^2,\pi_\go(y)\leq R|y|^2\}}.
\end{align*}
Recall that, under Assumption~\ref{assum:traps}, there exists $R_0,K>0$ such that for $x,y$ with $|x-y|\geq R_0$, one has $\bbE[\pi_\go(x)\pi_\go(y)]\leq K$. Moreover, one has for $x,y\in\bbZ^d$, 
\[
\bbE[\pi_\go(x)\pi_\go(y)\ind_{\{\pi_\go(x)\leq R|x|^2,\pi_\go(y)\leq R|y|^2\}}]\,\leq\, R|x|^2\bbE[\pi_\go(y)]\,\leq\, c|x|^2\,.
\]
Thus,
\begin{align*}
\bbE\big[\bE^\go[\cZ_m]^2\big]\,&\leq\, c\sum_{x,y\in \gL_m} g(0,x)^2g(0,y)^2\left[K\ind_{|x-y|\geq R_0} + c|x|^2 \ind_{|x-y|<R_0}\right]\\
&\leq\, c\sum_{x,y\in \gL_m} |x|^{2(2-d)}|y|^{2(2-d)} + c\sum_{x\in \gL_m} |x|^{4(2-d)+2}\,.
\end{align*}
Using standard series estimates, this yields the upper bound expected in Lemma~\ref{lem:Zn:2ndmom}. Similarly, one obtains
\[
\bbE\bVar^\go(\cZ_m)\,\leq\, c\sum_{x,y\in \gL_m} |x|^{2(2-d)}|x-y|^{2-d}|y|^{2-d} + c\sum_{x\in \gL_m} |x|^{3(2-d)+2}\,.
\]
Using the Cauchy-Schwarz inequality and a shift-invariance argument similarly to~(\ref{eq:Xn:2ndmom:Zn:1}-\ref{eq:Xn:2ndmom:Zn:2}), this yields the same upper bound (we leave the details to the reader). Recalling that $\bE[\cZ_m^2]= \bbE\bVar^\go(\cZ_m) + \bbE[\bE^\go[\cZ_m]^2]$, this completes the proof.
\end{proof}


\appendix
\section{Proof of Proposition~\ref{prop:rwre:rec}}\label{app:dim2}
This is a standard argument, which follows from the well-known \emph{Dirichlet principle}. We present the main ideas of the proof here, and refer to~\cite[Proposition~3.38]{AF02} (or~\cite{DS84, LP16} among other references) for more details. For $m\in\N$ and $f:\gL_{m+1}\to\R$ such that $f(y)=0$ for all $y\in\partial\gL_m$, its associated \emph{Dirichlet energy} in $\go\in\gO$ is defined by,
\begin{equation}
\cE^\go_m(f)\,\ceq\,\frac12\sum_{x,y\in\gL_m} \go_{x,y} [f(y)-f(x)]^2\,,
\end{equation}
and we write $\cE^1_m$ for the Dirichlet energy in homogeneous environment, i.e. $\go_{x,y}\ceq\ind_{x\sim y}$ almost surely for $x,y\in\bbZ^d$. Then the \emph{effective conductance} $C^\go(0,\partial\gL_m)$ between $0$ and $\partial\gL_m$ is defined by,
\begin{equation}\label{eq:def:effconduct}
C^\go(0,\partial\gL_m)\,\ceq\, \inf\left\{\cE^\go_m(f)\,;\, f:\gL_{m+1}\to\R, f(0)=1, f(y)=0\;\forall\, y\in\partial\gL_m\right\}\,,
\end{equation}
and, letting $\tau^+_A\ceq\inf\{n\geq1, X_n\in A\}$ the return time to any set $A\subset\gL_{m+1}$, one has,
\begin{equation}
C^\go(0,\partial\gL_m)\,=\, \pi_\go(0) \bP^\go(\tau^+_{\{0\}} > \tau^+_{\partial\gL_m})\,.
\end{equation}
Moreover, the infimum~\eqref{eq:def:effconduct} for $\cE^\go_m$ (resp. $\cE^1_m$) is achieved by some harmonic function $h^\go$ (resp. $h$). 

It follows from the Dirichlet principle that, for $\bbP$-a.e. $\go$, one has
\begin{align*}
\pi_\go(0) \bP^\go(\tau^+_{\{0\}} > \tau^+_{\partial\gL_m}) \,=\, \cE^\go_m(h^\go) \,\leq\, \cE^\go_m(h)\,=\, \frac12\sum_{x,y\in\gL_m} \go_{x,y} [h(y)-h(x)]^2\,.
\end{align*}
Taking the expectation above, this yields,
\begin{align*}
\bbE\left[\pi_\go(0) \bP^\go(\tau^+_{\{0\}} > \tau^+_{\partial\gL_m})\right]\,\leq\, \sup_{x\sim y}\bbE[\go_{x,y}] \times \cE^1_m(h)\,\leq\, c \, \tilde\bP(\tau^+_{\{0\}} > \tau^+_{\partial\gL_m})\,,
\end{align*}
where $\tilde\bP$ denotes the law of the homogeneous random walk in $\bbZ^d$. When $d\leq2$ the latter goes to 0 as $m\to+\infty$ (since the homogeneous random walk is recurrent), so this yields that $\bP^\go(\tau^+_{\{0\}} =+\infty)=0$ for $\bbP$-a.e. $\go$, finishing the proof.
\qed

\bibliographystyle{abbrv}
\bibliography{biblio}

\section*{Acknowledgments}
The authors acknowledge support from the grant ANR-22-CE40-0012 (project Local). C.S. is also supported by the Institut Universitaire de France. 

\end{document}